\documentclass[11pt,a4paper,reqno]{amsart}
\usepackage{amsmath,amsfonts,amssymb,amsthm,mathtools,mathrsfs}
\usepackage[dvipsnames]{xcolor}
\usepackage{booktabs,array,microtype,cite}
\usepackage[textwidth=14.5cm,textheight=21cm,centering]{geometry}
\usepackage[pagebackref=true,colorlinks=false]{hyperref}
\hypersetup{pdfborder={0 0 0.65},linkbordercolor={1 0 0},
	citebordercolor={0 0.65 0},urlbordercolor={0 0 1},
	pdftitle={A New Bound for the Gauss Circle Problem},
	pdfauthor={Xi Cen, Xitao Gao, Cheng Zhang, Junyong Zhang},
	pdfsubject={Gauss circle problem}}
\renewcommand*{\backref}[1]{}

\renewcommand*{\backrefalt}[4]{\ifcase #1\relax\else\space{\footnotesize #2}\fi}

\newcommand{\arxiv}[1]{\href{https://arxiv.org/abs/#1}{arXiv\,\nolinkurl{#1}}}
\DeclareRobustCommand{\Extref}[2]{\mbox{\hyperlink{cite.#1}{#2}}}
\DeclareRobustCommand{\Thmref}[1]{\mbox{\hyperref[#1]{Theorem~\ref*{#1}}}}
\DeclareRobustCommand{\Lemref}[1]{\mbox{\hyperref[#1]{Lemma~\ref*{#1}}}}
\DeclareRobustCommand{\Propref}[1]{\mbox{\hyperref[#1]{Proposition~\ref*{#1}}}}
\DeclareRobustCommand{\Corref}[1]{\mbox{\hyperref[#1]{Corollary~\ref*{#1}}}}
\DeclareRobustCommand{\Remref}[1]{\mbox{\hyperref[#1]{Remark~\ref*{#1}}}}
\DeclareRobustCommand{\Secref}[1]{\mbox{\hyperref[#1]{Section~\ref*{#1}}}}
\DeclareRobustCommand{\Subsecref}[1]{\mbox{\hyperref[#1]{Subsection~\ref*{#1}}}}
\DeclareRobustCommand{\Appref}[1]{\mbox{\hyperref[#1]{Appendix~\ref*{#1}}}}
\allowdisplaybreaks[2]
\newcommand{\R}{\mathbb R}
\newcommand{\Z}{\mathbb Z}
\newcommand{\T}{\mathbb T}
\newcommand{\N}{\mathbb N}
\newcommand{\Var}{\operatorname{Var}}
\numberwithin{equation}{section}
\newtheorem{theorem}{Theorem}[section]
\newtheorem{lemma}{Lemma}[section]
\newtheorem{proposition}{Proposition}[section]

\newtheorem{conjecture}{Conjecture}[section]
\theoremstyle{definition}
\theoremstyle{remark}
\begin{document}
	\title[Gauss circle problem]
	{A New Bound for the Gauss Circle Problem}
	\date{}
	
	\author[X. Cen]{Xi Cen}
	\address{Xi Cen, School of Science, China University of Mining and Technology-Beijing, Beijing 100083, People's Republic of China}
	\email{xicenmath@gmail.com}
	
	\author[X. Gao]{Xitao Gao}
	\address{Xitao Gao, Department of Mathematics, Beijing Institute of Technology, Beijing 100081, People's Republic of China}
	\email{xitao\_gao@bit.edu.cn}
	
	\author[C. Zhang]{Cheng Zhang}
	\address{Cheng Zhang, Yau Mathematical Sciences Center, Tsinghua University, Beijing, 100084, P.R. China}
	\email{czhang98@tsinghua.edu.cn}
	
	\author[J. Zhang]{Junyong Zhang}
	\address{Junyong Zhang, Department of Mathematics, Beijing Institute of Technology, Beijing 100081, People's Republic of China}
	\email{zhang\_junyong@bit.edu.cn}
	
	\subjclass[2020]{11P21, 11L07, 42B20}
	\keywords{Gauss circle problem, exponential sums,
		decoupling, spacing estimates}
	\begin{abstract}
	Let $E(X)=\#\{(a,b)\in\Z^2:a^2+b^2\le X\}-\pi X$.
	We prove that $E(X)=O_\varepsilon(X^{209/670+\varepsilon})$
	for every $\varepsilon>0$, uniformly for real $X\ge4$.
	This breaks the $5/16$ threshold suggested by Huxley as a possible limit of the Bombieri--Iwaniec method at the time.
	The proof introduces two refinements: a cone distribution estimate
	that preserves separate fourth- and sixth-power contributions
	in the double large sieve, and an explicit inversion of
	Taylor-coefficient constraints that cancels the linear error in a pair-counting argument.
These refinements, together with the Robert--Sargos and Watt spacing estimates and the Vinogradov mean value theorem, yield the improvement.
	\end{abstract}
	\maketitle
	
	\section{Introduction}\label{sec:intro}
	\subsection{Main result}
	For real $X\ge0$, define the circle remainder by
	\[
	E(X)=\#\{(a,b)\in\Z^2:a^2+b^2\le X\}-\pi X,
	\]
	where $\Z$ denotes the integers and $\#$ the cardinality of a finite set.
	The Gauss circle problem asks for the sharp pointwise bound on $E(X)$.
	Gauss's geometric argument gives $E(X)=O(X^{1/2})$
	\cite[p.~280]{Gauss}, whereas the conjectured bound is
	$O_\varepsilon(X^{1/4+\varepsilon})$.
	Successive improvements have been closely tied to the development of
	exponential-sum methods \cite{Kratzel88,GK,IKKN,BKZ18}.
	For nonnegative $A,B$, we write $A\lesssim B$ if $A\le CB$ for a constant
	$C>0$. A subscript indicates the permitted dependence of $C$.
	Our main result is the following.
	
	\begin{theorem}\label{thm:main}
		For every $\varepsilon>0$ and every real $X\ge4$,
		\[
		|E(X)|\lesssim_\varepsilon X^{209/670+\varepsilon}.
		\]
	\end{theorem}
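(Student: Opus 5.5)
The plan is to follow the Bombieri--Iwaniec--Huxley framework, sharpened by the two refinements announced in the abstract. First I reduce \Thmref{thm:main} to bounds for exponential sums. Write $N(X)=\sum_{n\le X} r(n)$ and smooth the indicator of the disc at scale $X^{1/2}/T$ (or use the standard $\psi$-function expansion $\sum_{a}\psi(\sqrt{X-a^2})$). This gives
\[
E(X)\ll \frac{X^{1/2}}{T}+\sum_{h\le H}\frac1h\Big|\sum_{m\sim M}e\big(hF(m/M)\,T\big)\Big|+\text{(similar dyadic pieces)},
\]
with $F$ a fixed smooth phase satisfying the Huxley nondegeneracy conditions ($F'''\ne0$, and the relevant Wronskian-type combinations nonvanishing), $T\asymp X^{1/2}$, and $M\le X^{1/2}$. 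After dyadic decomposition, the task is a uniform estimate
\[
S=\sum_{h\sim H}\Big|\sum_{m\sim M}e\big(\tfrac{hT}{M}F(m/M)\big)\Big|\lesssim_\varepsilon HM\,(T/M^2)^{\cdots}\cdots,
\]
chosen so that the resulting exponent is $209/670$. In the ranges where $M$ is small or $H$ is extreme, I will use van der Corput's $B$-process together with the classical exponent pairs. This leaves the critical range $M\approx T^{1/2}$, $H\approx$ a power of $T$, for the main argument.

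In the critical range I apply the Bombieri--Iwaniec method. Split the $m$-sum into short blocks of length $N$. On each block, Taylor expand to second order and apply Poisson summation, which converts each block sum into a short sum over major arcs $a/q$. This produces first-spacing vectors $\mathbf x=(x_1,x_2,x_3,x_4)$ built from $(h,r,\dots)$ and second-spacing vectors $\mathbf y$ built from the arcs $(a,q)$. The double large sieve then bounds $S^4$ by $\sqrt{\mathcal B_1\mathcal B_2}$, where $\mathcal B_1$ and $\mathcal B_2$ count close pairs. For $\mathcal B_1$, the count of $(h_1,r_1,h_2,r_2)$ with the $x$-coordinates close, I use the Robert--Sargos estimate (fourth powers) and Watt's spacing bound. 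The first refinement enters here. Instead of merging the fourth- and sixth-moment contributions through Hölder's inequality, I partition the $\mathbf x$-points according to a cone distribution, meaning their angular direction in the $(h,hr,\dots)$ space. I then bound each cone separately, the fourth-moment part by Robert--Sargos and the sixth-moment part by the Vinogradov mean value theorem (the main conjecture, Bourgain--Demeter--Guth, in degree $3$). Summing over cones keeps both contributions as separate terms in the final inequality, rather than just their maximum or geometric mean.

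For $\mathcal B_2$, the number of pairs of major arcs whose Taylor-coefficient vectors $(x_0,x_1,x_2,x_3)$ lie within prescribed distances $\delta_i$, I follow Huxley's resonance-curve counting. The second refinement enters here. The constraints on the Taylor coefficients determine $(a/q,\ \text{arc position})$ up to an error linear in the $\delta$'s. I will invert the polynomial map from the arc parameters to the coefficients \emph{explicitly*}, to second order. The aim is to show that the linear error term cancels between the two members of a pair, so that the pair count is governed by quadratic errors. This should improve $\mathcal B_2\lesssim M^{\varepsilon}(R+R^2\delta)$-type bounds in the crucial range. Finally, I combine all the bounds into a linear program in the exponents of $H$, $N$, $R$, optimize the block length $N$ and the cone parameter, and verify that the maximum over all ranges of $(M,H)$ equals $209/670$.

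I expect the main obstacle to be the explicit inversion in $\mathcal B_2$. The difficulty is showing that the linear terms genuinely cancel uniformly over all pairs of arcs, including degenerate pairs with nearly equal $q$ or small determinant $a_1q_2-a_2q_1$. Those degenerate configurations would be handled separately, by the trivial bound combined with the smallness of their number. The final exponent optimization is routine but delicate, and I would check it by computing the vertices of the feasible region exactly.
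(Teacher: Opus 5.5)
There is a genuine gap. Both refinements are attached to the wrong counting problems, and your plan contains no argument for the range that actually decides the exponent.

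\textbf{The uncovered range.} In the paper's normalization ($T=X$, $\rho=HT/M=T^{p}$), exponent pairs cover only $p\le 42/67$. The Bombieri--Iwaniec argument, even with the new first-spacing input and Watt's second-spacing count, has leading term $T^{(10-p)/30}$. This equals $T^{209/670}$ exactly at $p=43/67$ and is worse below it.

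The paper handles the window $42/67\le p\le 43/67$ by a different method. It applies a van der Corput step to reach the phase $\sqrt{Thq}-M\alpha q$, averages over shifts $r\le J=\lceil T^{36/67}/M\rceil$, expands to degree six, and uses the degree-five Vinogradov mean value theorem (thirtieth moment) inside the double large sieve. This reduces matters to the count $B$ of pairs $(h,q),(h',q')$ with close fourth, fifth and sixth Taylor coefficients (\Lemref{lem:pair-reduction}).

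That is where the explicit inversion happens. The fourth-coefficient constraint is solved for $w=h'-h$ as $f_{k,n}(h)+O(Hb^2/r)$, with $f_{k,n}(h)=(h^{1/9}+\mathcal A_{k,n}^{1/9})^9-h$, and the linear error cancels. The remaining count is treated by a Fej\'er kernel, a further van der Corput step factoring the phase as $x_{n,k}y_{v,t}$, and Robert--Sargos for both factors (\Propref{c:count-prop}). This relies on the exact algebraic form of the reciprocal phase, which a generic $F$ with nondegeneracy conditions does not supply.

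Watt's arc-pair bound is used unchanged. Nothing in your sketch shows that inverting the arc-to-coefficient map improves $\mathcal B_2$. The degenerate-arc obstacle you anticipate has no counterpart in the actual proof.

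\textbf{The first refinement.} As you describe it, this would also not deliver the needed input. In the larger range the double large sieve combines Watt's arc-pair count with a moment or distribution bound for the first-spacing sum $F_c(x)=\sum c_{kl}e(lx_1+klx_2+l\sqrt k\,x_3)$. That side is not a pair count $\mathcal B_1$, and Watt's lemma is a second-spacing estimate, not a first-spacing one.

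The frequencies $l(1,k,\sqrt k)$ lie on a cone over a parabola, so the sixth-power term is a decoupling-type square-root-cancellation bound. The degree-three Vinogradov system lives on the moment curve and does not apply to the non-polynomial phase $l\sqrt k$.

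The paper's \Thmref{thm:first}, $\mathcal D(\lambda)\lesssim_\varepsilon(KL)^\varepsilon\bigl((KL)^3\lambda^{-6}+KL^2(L+Z)\lambda^{-4}\bigr)$, comes from Guth--Maldague's amplitude-dependent wave-envelope estimate. An integer count (two exact Fourier equalities and the divisor bound) bounds the envelope energy at angular scale $\Delta$ by $KL^2(\Delta+L+Z)$, up to $(KL)^\varepsilon$. Summing over $\Delta<L+Z$ gives the fourth-power term. For $\Delta\ge L+Z$, the bound $\|T_\vartheta\|_\infty\lesssim\Delta L$ and the amplitude threshold force $\Delta\lesssim K^2L/\lambda^2$, which gives the sixth-power term. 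The distributional sieve (\Lemref{lem:elementary-sieve}) then keeps the two terms separate through the sum over denominators $Q$.

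Without these two estimates and the parameter choices they dictate ($J$, Watt's $N$, $R$), your final linear program has no inputs that produce $209/670$.
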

	
\Thmref{thm:main} improves the exponent
$0.3144831759741\ldots$ of Li and Yang
\cite{LY23}
and breaks the $5/16$ threshold:
\[
\frac{209}{670}=\frac5{16}-\frac3{5360}.
\]
Huxley \cite[p.~280]{Huxley93} suggested this threshold
as a possible limit of the Bombieri--Iwaniec method at the time.
See also Popov \cite[p.~914]{Popov19}, \cite[p.~55]{Popov24}
and Li and Yang \cite[end of \Extref{LY23}{Section~1}]{LY23}.
	
	The improvement uses two complementary refinements.
	From Guth--Maldague's amplitude-dependent estimate \cite[\Extref{GM}{Theorem~2}]{GM},
	we obtain a distribution estimate for cone exponential sums whose
	fourth- and sixth-power terms remain separate through the double large
	sieve, rather than
	first passing to a single moment estimate (\Thmref{thm:first}).
	In the complementary range, explicit inversion of the
	Taylor-coefficient constraints cancels the linear error and yields
	a pair count of diagonal size, up to arbitrarily small power losses
	(\Propref{c:count-prop}). Together these refinements give
	$209/670$ without strengthening Watt's second-spacing bound
	\cite[\Extref{Watt04}{Lemma~8.2}]{Watt04}.
	
	\subsection{Historical upper bounds}
	The table records pointwise bounds
	$E(X)=O_\varepsilon(X^{\theta+\varepsilon})$, including independent
	proofs as well as exponent improvements. All exponents use the squared
	radius $X$, so the corresponding radius exponent is $2\theta$.
	Logarithmic losses are suppressed. Dates are publication dates, except
	for the marked preprint. For early methods and historical accounts, see
	\cite{Walfisz26,vdC28,Wilton28,Kratzel88,LuneWattel90,BKZ18}.
	\begin{center}\small
		\setlength{\tabcolsep}{5pt}
		\begin{tabular}{@{}p{8.5cm}lc@{}}\toprule
			Work & Year & Exponent in $X$\\\midrule
			Sierpi\'nski \cite{Sier06} & 1906 & $1/3$\\
			Landau \cite{Landau12,Landau13} & 1912--1913 & $1/3$\\
			Van der Corput \cite{vdC23} & 1923 & $33/100$\\
			Littlewood--Walfisz, Landau \cite{LittlewoodWalfisz24} & 1924 & $37/112$\\
			Hardy \cite{Hardy25} & 1925 & $37/112$\\
			Walfisz \cite{WalfiszTeiler27} & 1927 & $163/494$\\
			Nieland (van der Corput's method) \cite{Nieland28} & 1928 & $27/82$\\
			Titchmarsh \cite{Titchmarsh34} & 1934 & $15/46$\\
			Hua \cite{Hua42} & 1942 & $13/40$\\
			Yin, with Chen's correction \cite{Yin62} & 1962 & $12/37$\\
			Chen \cite{Chen63} & 1963 & $12/37$\\
			Nowak \cite{Nowak84} & 1984 & $35/108$\\
			Kolesnik \cite{Ko85} & 1985 & $139/429$\\
			Iwaniec--Mozzochi \cite{IM88} & 1988 & $7/22$\\
			Huxley \cite{Huxley93} & 1993 & $23/73$\\
			Huxley \cite{Huxley03} & 2003 & $131/416$\\
			Li--Yang \cite{LY23} (preprint) & 2023 & $0.3144831759\ldots$\\
			\Thmref{thm:main} & & $209/670$\\\bottomrule
		\end{tabular}
	\end{center}
	
	Several advances arose first in the divisor problem. Kolesnik obtained
	the exponents $12/37$, $346/1067$ and $35/108$ in 1969, 1973 and 1982,
	respectively \cite{Ko69,Ko73,Ko82}. Nowak \cite{Nowak84} transferred
	the last of these to the circle problem through divisors in arithmetic
	progressions. Kolesnik's later exponent-pair refinement gave $139/429$
	\cite{Ko85}.
	
	The Bombieri--Iwaniec method \cite{BI86}, adapted to the circle and
	divisor problems by Iwaniec--Mozzochi \cite{IM88}, introduced a new
	approach based on rational approximation and the double large sieve.
	It reduces the exponential sums to first- and second-spacing problems.
	Huxley's development of this method
	\cite{Huxley90,Huxley93,Huxley96,Huxley03} culminated in
	\cite{Huxley03,IKKN}
	\[
	E(X)=O\!\left(X^{131/416}(\log X)^{18637/8320}\right)
	\qquad(X\ge4).
	\]
	
	Bourgain--Watt \cite{BW18} connected the first-spacing problem with
	decoupling for perturbed cones. Li--Yang \cite{LY23} then combined
	Guth--Maldague's small-cap decoupling theorem \cite{GM} with
	second-spacing estimates to improve Huxley's exponent.
	
	\subsection{Lower bounds and conjectures}
	Write $\log$ for the natural logarithm and, for sufficiently large $X$,
	$\log_2X=\log\log X$ and $\log_3X=\log\log\log X$.
	For a positive function $G$, the notation $E(X)=\Omega(G(X))$ means
	$\limsup_{X\to\infty}|E(X)|/G(X)>0$.
	Replacing $|E|$ by $E$ or $-E$ defines $\Omega_+$ or $\Omega_-$,
	respectively. We write $\Omega_\pm$ when both signs hold.
	Hardy \cite{Hardy15,Hardy16} and Landau \cite{Landau15} established
	the $X^{1/4}$ scale of oscillation. In particular, Hardy proved
	\[
	E(X)=\Omega_\pm(X^{1/4}),\qquad
	E(X)=\Omega_-\bigl((X\log X)^{1/4}\bigr).
	\]
	Thus $E(X)=O(X^{1/4})$ is false, but the following power bound is expected.
	\begin{conjecture}[Hardy's conjecture]\label{conj:Hardy}
		For every $\varepsilon>0$ and every real $X\ge4$,
		\[
		|E(X)|\lesssim_\varepsilon X^{1/4+\varepsilon}.
		\]
	\end{conjecture}
	
	Further lower-bound methods were developed by Ingham \cite{Ingham40}
	and Gangadharan \cite{Gangadharan61}.
	Corr\'adi--K\'atai \cite{CK67} and Hafner \cite{Hafner81} proved,
	respectively, that for some constants $c,C>0$,
	\[
	\begin{aligned}
		E(X)&=\Omega_+\!\left(X^{1/4}
		\exp\!\left(c(\log_2X)^{1/4}(\log_3X)^{-3/4}\right)\right),\\
		E(X)&=\Omega_-\!\left((X\log X)^{1/4}(\log_2X)^{(\log2)/4}
		\exp\!\left(-C\sqrt{\log_3X}\right)\right).
	\end{aligned}
	\]
	For $X>\exp(1)$, set
	\[
	\mathcal L(X)=(X\log X)^{1/4}
	(\log_2X)^{\frac34(2^{1/3}-1)}.
	\]
	Soundararajan \cite[\Extref{S03}{Theorem~1}]{S03} obtained
	\[
	E(X)=\Omega\!\left(\mathcal L(X)(\log_3X)^{-5/8}\right).
	\]
	Sourmelidis \cite{Sourmelidis26} and Mahatab \cite{Mahatab26}
	independently improved the exponent $-5/8$ to $-3/8$ by the resonance
	method. Lamzouri \cite[\Extref{Lam26}{Theorem~1.1}]{Lam26} removed the remaining
	$\log_3X$ loss and determined a sign:
	\[
	E(X)=\Omega_-\bigl(\mathcal L(X)\bigr).
	\]
	This reaches the scale predicted by \cite[\Extref{Lam25}{Conjecture~1.7}]{Lam25}.
	\begin{conjecture}[Lamzouri's maximal-order conjecture]\label{conj:maximal}
		There are constants $c_1,c_2>0$ such that, for either sign and all
		sufficiently large $X$,
		\[
		c_1\mathcal L(X)\le\sup_{X\le x\le2X}(\pm E(x))
		\le c_2\mathcal L(X).
		\]
	\end{conjecture}
	This implies Conjecture~\ref{conj:Hardy}.
	The negative $\Omega$-result concerns an unbounded sequence of arguments,
	not both signs on every dyadic interval as required here.
	
	\subsection{Mean values and distribution}
	Cram\'er \cite{Cramer22} established a mean-square asymptotic with a
	positive leading constant. Its remainder was improved by
	Landau \cite{Landau23}, Walfisz \cite{WalfiszTeiler27} and
	K\'atai \cite{Katai65}. Preissmann \cite{Preissmann88} gave a shorter
	proof of K\'atai's estimate. Nowak \cite{Nowak04} proved that, for some
	$C>0$,
	\[
	\int_0^X E(t)^2\,dt
	=CX^{3/2}+O\!\left(X(\log X)^{3/2}\log\log X\right).
	\]
	This supports the $X^{1/4}$ scale of typical fluctuations, but does not
	control the maximal order.
	Heath-Brown \cite{HeathBrown92} studied limiting distributions and
	moments, and Tsang \cite{Tsang92} obtained higher-moment asymptotics.
	Lester--Wigman \cite{LW23} examine correlations of lattice points near
	circles, while Lamzouri \cite{Lam25} studies large deviations of the
	circle remainder.
	
	\subsection{Notation}
	We write $\N=\{1,2,\ldots\}$. For $z\in\mathbb C$ and $t\in\R$, set
	\[
	e(z)=\exp(2\pi\mathrm i z),\qquad
	\|t\|=\min_{j\in\Z}|t-j|.
	\]
	For a set $\mathcal U$, $\mathbf1_{\mathcal U}$ is its indicator.
	The floor and ceiling are characterized by
	$\lfloor t\rfloor\le t<\lfloor t\rfloor+1$ and
	$\lceil t\rceil-1<t\le\lceil t\rceil$, and
	$\{t\}=t-\lfloor t\rfloor$ is the fractional part.
	Sums over real intervals run over their integer points.
	Unless stated otherwise, $\sum_{n\le Y}$ means $\sum_{1\le n\le Y}$,
	and $\sum_{d\mid n}$ runs over the positive divisors of $n\in\N$.
	We write $n\approx M$ for $M\le n<2M$.
	A dyadic scale is $2^j$, $j\in\Z$, unless another base is specified.
	Each dyadic interval may be intersected with an arbitrary interval,
	with either endpoint included or excluded.
	
	For nonnegative $A,B$, write $A\eqsim B$ if $A\lesssim B\lesssim A$,
	and $A\gtrsim B$ if $B\lesssim A$.
	The notation $A\sim B$ is reserved for asymptotic equivalence,
	meaning $A/B\to1$ in the specified limit.
	The notation $O(B)$ denotes a quantity of modulus $\lesssim B$.
	Subscripts on $O$ and $\lesssim$ have the same meaning.
	Every estimate involving $\varepsilon$ holds for each $\varepsilon>0$.
	Positive constants $c,C$ may change from line to line and may depend
	on fixed hypotheses, but not on varying parameters.
	Parameters are local to a statement or proof unless explicitly retained
	for later sections.
	
	For a complex-valued function $b$ on an interval $I$, set
	\[
	\|b\|_{\infty,I}=\sup_{x\in I}|b(x)|,\qquad
	\Var_I(b)=\sup_{\substack{j\ge1\\x_0<\cdots<x_j,\ x_i\in I}}
	\sum_{i=1}^j|b(x_i)-b(x_{i-1})|.
	\]
	These use pointwise values, including values at discontinuities.
	For integers $u\le v$ and a finite sequence $b=(b_n)_{n=u}^v$, set
	\[
	\|b\|_\infty=\max_{u\le n\le v}|b_n|,\qquad
	\Var(b)=\sum_{n=u}^{v-1}|b_{n+1}-b_n|.
	\]
	For a measurable set $\mathcal U\subseteq\R^d$, $d\in\N$,
	let $|\mathcal U|$ denote its Lebesgue measure.
	For a measurable box $\mathcal B\subseteq\R^d$ of positive finite volume
	and $1\le q<\infty$, write
	\[
	\|f\|_{L^q_\#(\mathcal B)}
	=\left(\frac1{|\mathcal B|}\int_{\mathcal B}|f(x)|^q\,dx\right)^{1/q}.
	\]
	On $\R^d$, $\|f\|_q$ denotes the usual Lebesgue norm.
	For $x,\xi\in\R^d$, write $x\cdot\xi=\sum_{i=1}^dx_i\xi_i$,
	$|x|_\infty=\max_{1\le i\le d}|x_i|$, and let $|x|$ be the Euclidean norm.
	Set $\T=\R/\Z$, with Lebesgue measure of total mass one.
	For integrable $f,g$ on $\R^d$ and an integrable one-periodic function $v$,
	our Fourier and convolution conventions are
	\[
	\begin{gathered}
		\widehat f(\xi)=\int_{\R^d}f(x)e(-x\cdot\xi)\,dx,\qquad
		(f*g)(x)=\int_{\R^d}f(x-y)g(y)\,dy,\\
		\widehat v(j)=\int_0^1v(t)e(-jt)\,dt\qquad(j\in\Z).
	\end{gathered}
	\]
	For $v:\mathcal U\to\mathbb C$ and $\lambda>0$,
	$\{|v|>\lambda\}=\{x\in\mathcal U:|v(x)|>\lambda\}$.
	For a matrix $A$, $A^{\mathsf T}$ is its transpose and, when $A$ is
	invertible, $A^{-\mathsf T}=(A^{-1})^{\mathsf T}$.
	We write $\mathscr S(\R^d)$ for the Schwartz class and
	$\operatorname{supp}v$ for the support of $v$.
	Logarithmic factors are absorbed into arbitrarily small power losses
	using
	\[
	(\log(2T))^A\le C_{A,\varepsilon}T^\varepsilon
	\qquad(A\ge0,\ \varepsilon>0,\ T\ge2).
	\]
	
	\subsection{The reciprocal-sum estimate}
	Let $T\ge2$, $H\ge1$, $1\le M\le T^{1/2}$, and let
	$\alpha,\kappa\in\R$ satisfy $|\alpha|\le1/4$ and $|\kappa|\le1$.
	Let $g,G:[1,2]\to\mathbb C$ satisfy
	\[
	\|g\|_{\infty,[1,2]}+\Var_{[1,2]}(g)\le1,\qquad
	\|G\|_{\infty,[1,2]}+\Var_{[1,2]}(G)\le1.
	\]
	Define $F(z)=1/[4(z+\alpha)]+\kappa$ for $z>-\alpha$, and put
	\begin{equation}\label{eq:S}
		S=\sum_{h\approx H}g(h/H)\sum_{n\approx M}G(n/M)
		e\!\left(\frac{hT}{M}F(n/M)\right).
	\end{equation}
	Each dyadic block may be intersected with an arbitrary interval,
	with either endpoint included or excluded. Set
	\[
	\beta=\frac{209}{670},\qquad s=\frac72\beta-1=\frac{123}{1340}.
	\]
	\begin{theorem}\label{thm:recip}
		For every $\varepsilon>0$, uniformly for this family,
		\begin{equation}\label{eq:recipmain}
			|S|\lesssim_\varepsilon HT^{\beta+\varepsilon},\qquad
			1\le H\le MT^{-\beta},\quad M\le T^{1/2}.
		\end{equation}
	\end{theorem}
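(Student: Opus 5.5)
I will follow the Bombieri--Iwaniec scheme as developed by Huxley and refined by Li--Yang, and use the two new ingredients announced in the introduction at the points where the previous argument lost. The first step disposes of easy ranges. When $M$ is small compared with $T^{\beta}$, or $H$ is very small, the trivial bound $|S|\lesssim HM$ or a first-derivative/van der Corput estimate already gives $HT^{\beta+\varepsilon}$. Partial summation removes the weights $g,G$ at the cost of an extra supremum over subintervals, using $\|g\|_\infty+\Var(g)\le1$. In the remaining range I would split $n\approx M$ into short blocks of length $N$, where $N$ is chosen in terms of $T,M,H$.

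On each block I would take a rational approximation $a/q$ to $F'(n/M)$-type quantities at a resonance (major arcs). Poisson summation in $n$ on each arc then converts the block sum into a short dual exponential sum. This sum has the shape $\sum_h\sum_k e(\mathbf x\cdot\mathbf y)$, with $\mathbf x=(h,h k, hk^2\ldots)$-type vectors from the $h$ variable and the Taylor-coefficient vectors $\mathbf y_{a/q}$ attached to each arc. Cauchy--Schwarz and the double large sieve bound the total by the product of two counts:
\begin{enumerate}
\item the first-spacing count of close pairs of vectors $(h,hk)$, which is a cone/lattice problem;
\item the second-spacing count of pairs of arcs $(a/q,a'/q')$ with close Taylor coefficients.
\end{enumerate}

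For the first-spacing side I would invoke the cone distribution estimate (\Thmref{thm:first}) from Guth--Maldague amplitude-dependent decoupling. The key point is to keep its fourth- and sixth-power contributions as separate terms inside the double large sieve, rather than interpolating to one moment as Li--Yang do. For the second-spacing side I have two tools. In one regime I use the pair count of diagonal size (\Propref{c:count-prop}), obtained by inverting the Taylor-coefficient constraints. In the other I use Watt's bound \cite[Lemma~8.2]{Watt04} together with Robert--Sargos spacing and Vinogradov's mean value theorem for the minor-arc and short-sum contributions. Each combination yields a bound of the form $HT^{\varepsilon}\sum_i T^{a_i}M^{b_i}H^{c_i}N^{d_i}$.

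The final step is an optimisation. I choose $N$ (and the dyadic size of $q$) as a function of $(H,M)$ and check, region by region in the $(\log_T H,\log_T M)$ plane with $H\le MT^{-\beta}$ and $M\le T^{1/2}$, that some admissible case gives at most $HT^{\beta}$. The value $s=\tfrac72\beta-1$ presumably marks the critical $M$ (or $H$) threshold where the regimes switch. The vertex where the constraints balance is what produces $209/670$.

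I expect the main obstacle to be this bookkeeping near the extremal configuration, together with the verification that the separate $L^4/L^6$ terms really combine with the diagonal pair count to beat the $5/16$ barrier there. If that balance fails at a corner, I would first try adjusting the block length $N$ piecewise. After that, I would try inserting an extra Weyl/van der Corput $A$-step before the large sieve.
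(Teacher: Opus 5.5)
There is a genuine gap. The proposal names the paper's ingredients but assembles them in a way that cannot work, and it defers exactly the quantitative verification that makes up the theorem. The main structural problem is the role you give \Propref{c:count-prop}. The count $B$ is not a second-spacing count of Farey arcs that can be set against \Thmref{thm:first} in one Bombieri--Iwaniec sieve. It counts pairs $((h,q),(h',q'))$ whose fourth, fifth and sixth Taylor coefficients $A_i(h,q)$ of the phase $\sqrt{Thq}-\omega q$ are close. It belongs to a separate, non-Farey argument for the middle range $42/67\le p\le43/67$. That argument uses one van der Corput transformation in $n$, then averaging over shifts $q\mapsto q+r$ with $1\le r\le J=\lceil T^{36/67}/M\rceil$, then Taylor expansion to degree six. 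It closes with a double large sieve on the vectors $(r,\ldots,r^6)$, whose first-spacing input is the degree-five Vinogradov mean value theorem (the $L^{30}$ bound $J^{15+\varepsilon}$), not the cone estimate. Robert--Sargos enters only inside the proof of $B\lesssim_\varepsilon HUT^\varepsilon$, for the two factorized spacing counts that appear after the explicit inversion and the Fej\'er expansion. Conversely, \Thmref{thm:first} is used only for $43/67\le p\le1-\beta$. There it is paired with Watt's arc-pair count through the distributional sieve of \Lemref{lem:elementary-sieve}, with vectors $y_{kl}=(kl,l,l\sqrt k,l/\sqrt k)$. No Vinogradov input is used in that range, and the long major arcs are handled by Watt's own major-arc estimate. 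The two sieves act on incompatible vectors, so neither ingredient can be swapped into the other argument as you propose.

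Quantitatively, you fix no ranges, block lengths or shift lengths, and you identify the balancing as the expected obstacle. That balancing is the proof.

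Your easy case, $M$ small compared with $T^\beta$, is vacuous, since $1\le H\le MT^{-\beta}$ forces $M\ge T^\beta$. What is actually needed is two specific classical exponent pairs:
\begin{itemize}
\item $(2/7,4/7)$, which gives $(HT)^{2/7}\le T^\beta$ exactly when $H\le T^s$; this is the meaning of $s=\frac72\beta-1$.
\item $(13/31,16/31)$, which covers $p\le42/67$.
\end{itemize}
The cut at $42/67$ is essential, because with this $J$ the middle-range Taylor remainder condition $\rho(J/U)^7=(MJ)^7/\rho^6\lesssim1$ is exactly tight at $\rho=T^{42/67}$.

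The middle range then gives $T^{(1+8p+216/67)/30}$, using $MJ\eqsim T^{36/67}$, which comes from balancing $(MJ)^{15}=T\rho^{11}$ at $\rho=T^{43/67}$. The larger range uses Watt's $N$ from \cite[(8.18)--(8.20)]{Watt04} and $R\eqsim(M^3/(NT))^{1/2}$. Its critical term is $T^{(9+\tau)/30}$, obtained after summing the separate fourth- and sixth-power contributions over denominators. This summation requires checking that the powers $(R/Q)^{8/9}$, $(R/Q)^{1/3}$ and $(R/Q)^{1/12}$ are summable. The two exponents meet at $209/670$ exactly at $p=43/67$. Without these choices and checks, what you have is a plan rather than a proof.
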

	Define $\log_T x=(\log x)/(\log T)$ for $x>0$, and set
	\begin{equation}\label{eq:params}
		\rho=\frac{HT}{M},\quad U=\frac{HT}{M^2},\quad
		p=\log_T\rho,\quad m=\log_T M,\quad \tau=1-p.
	\end{equation}
	Thus $H=T^{m+p-1}$, $U=T^{p-m}$, $\rho^2=THU$, and
	$U/H=T/M^2\ge1$.
	The definitions in this subsection are retained throughout
	Sections~\ref{sec:main-proof}--\ref{sec:secondspacing}.
	Constants in estimates for \eqref{eq:S} are independent of
	$T,H,M,\alpha,\kappa$, the weights and the interval endpoints.
	Auxiliary estimates may depend on their stated comparison constants
	and, when indicated, on $\varepsilon$ or the dimension.
	In these sections, $o(1)$ refers to $T\to\infty$, uniformly in the
	stated parameter region. Other limits are specified explicitly.
	For fixed $T_0\ge2$, the range $2\le T\le T_0$ follows from
	$|S|/H\lesssim M\le T_0^{1/2}$.
	
	\subsection{Proof outline}
	It suffices to prove \Thmref{thm:recip}.
	\Secref{sec:main-proof} makes this reduction using the classical
	circle-problem identity \cite[proof of \Extref{GK}{Theorem~4.8}]{GK} and Vaaler's
	approximation \cite[\Extref{Vaaler85}{Theorem~18}]{Vaaler85}.
	Classical exponent pairs cover $H\le T^s$ or $p\le42/67$.
	The remaining region is
	\[
	H>T^s,\qquad \frac{42}{67}\le p\le1-\beta,
	\]
	which we divide at $p=43/67$.
	
	\emph{The middle range: an explicit pair count.}
	For $42/67\le p\le43/67$, a van der Corput transformation gives
	the phase $\sqrt{Thq}-M\alpha q$, with
	$h\approx H$ and $U/100\le q\le2U$.
	Choose $J=\lceil T^{36/67}/M\rceil$.
	Let $B$ be the ordered-pair count in \eqref{eq:sampling-pairs},
	which retains only the fourth, fifth and sixth Taylor coefficients.
	Taylor expansion through degree six, the degree-five Vinogradov mean
	value theorem \cite[\Extref{BDG}{Theorem~1.1}]{BDG}, and the double large sieve give
	\[
	\frac{|S|}{H}\lesssim_\varepsilon T^\varepsilon
	\left[(T\rho^8(MJ)^6)^{1/30}
	\left(\frac B{HU}\right)^{1/30}
	+\frac{MJ}{\sqrt\rho}+1\right]
	\]
	by \Lemref{lem:pair-reduction}.
	The key estimate is $B\lesssim_\varepsilon HU T^\varepsilon$.
	\Secref{c:height} proves it by solving the coefficient constraints
	for the difference of the two $h$ variables. The linear error cancels,
	leaving a quadratic error small enough for Fourier expansion.
	A further van der Corput transformation factors the phase into two
	functions, each depending on a different pair of integer variables.
	The Robert--Sargos estimates \cite{RS06} then control both spacing counts.
	This yields \Propref{c:count-prop}. Since $MJ\eqsim T^{36/67}$,
	\[
	(T\rho^8(MJ)^6)^{1/30}
	\lesssim T^{(1+8p+6(36/67))/30},\qquad
	\frac{MJ}{\sqrt\rho}\lesssim T^{15/67}.
	\]
	The cancellation makes this shift length compatible with a pair
	count of diagonal size.
	
	\emph{The larger range: a distributional first-spacing estimate.}
	For $43/67\le p\le1-\beta$, \Secref{sec:secondspacing} applies
	a Farey dissection, two van der Corput transformations, and Watt's
	second-spacing bound \cite[\Extref{Watt04}{Lemma~8.2}]{Watt04}.
	The new input is \Thmref{thm:first}, proved in
	\Secref{sec:spacing} from Guth--Maldague's amplitude-dependent
	estimate \cite[\Extref{GM}{Theorem~2}]{GM} and an integer count using exact equalities
	in two Fourier coordinates.
	Small angular scales yield the fourth-power term in \eqref{eq:mixed},
	while the amplitude-dependent selection of larger scales yields the
	sixth-power term.
	Keeping these terms separate in the distributional double large sieve
	of \Appref{sec:sieve} gives \Lemref{lem:farey-Q}.
	The resulting denominator powers are summable, and the interval lengths
	in \eqref{eq:interval-lengths} give
	$|S|/H\lesssim_\varepsilon T^{\beta+\varepsilon}$.
	
	At $p=43/67$, the middle-range exponent and the first exponent in
	\eqref{eq:larger-leading} agree:
	\[
	\frac{1+8(43/67)+6(36/67)}{30}
	=\frac{10-43/67}{30}
	=\frac{209}{670}<\frac5{16}.
	\]
	The auxiliary summation and counting results are collected in
	\Appref{sec:transforms}. All reductions retain bounded-variation
	weights and arbitrary interval endpoints, as required to recover $E(X)$.
	
	\section{The main theorem: reduction to two estimates}\label{sec:main-proof}
	We first apply the classical circle-problem reduction, then isolate
	the two ranges not covered by classical exponent pairs.
	
	\subsection{Reduction to reciprocal sums}\label{sec:reduction}
	\begin{proposition}\label{prop:transfer}
		\Thmref{thm:recip} implies \Thmref{thm:main}.
	\end{proposition}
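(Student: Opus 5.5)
We follow the classical route of \cite[proof of \Extref{GK}{Theorem~4.8}]{GK}. Write $\psi(t)=\{t\}-1/2$ and $N(X)=\#\{(a,b):a^2+b^2\le X\}$. Split the lattice count by the symmetry $a\leftrightarrow b$ and by sign into the region $|a|\le\sqrt{X/2}$, and count $b$ by floors. Comparing with $\pi X$ (the smooth main terms cancel by Euler--Maclaurin) reduces $E(X)$ to $O(1)$ plus a bounded number of sums of the form
\[
\sum_{0\le a\le \sqrt{X/2}}\psi\bigl(\sqrt{X-a^2}\bigr),
\]
together with boundary terms of size $O(1)$. Equivalently, substituting $a=n$ and rewriting with the divisor-type identity, we use phases of the shape $T F(n/M)$. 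Set $T\eqsim X^{1/2}$. Split $n$ dyadically into blocks $n\approx M$ with $M\le T^{1/2}$; blocks with $M$ larger are reflected by the symmetry of the circle about $a=b$, which is the point of the restriction $a\le\sqrt{X/2}$. The shifts $\alpha,\kappa$ absorb the translations needed to put the phase into the normalized form $F(z)=1/[4(z+\alpha)]+\kappa$ (after the appropriate substitution, e.g.\ the square-root phase becomes reciprocal through the Gauss--Kuzmin-type parametrization used in \cite{GK}). I would verify this normalization carefully, but it is bookkeeping.

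For each block we apply Vaaler's approximation \cite[\Extref{Vaaler85}{Theorem~18}]{Vaaler85} with parameter $H_0$:
\[
\psi(t)=\sum_{1\le|h|\le H_0}a_h e(ht)+O\Bigl(\sum_{|h|\le H_0}b_h e(ht)\Bigr),
\]
where $a_h\ll 1/h$, $b_h\ll 1/H_0$, and the coefficients are smooth in $h/H_0$. The error term contributes $M/H_0$ plus $H_0^{-1}\sum_{h\le H_0}|S_h|$, which is dominated by the main-term analysis. Summing dyadically in $h\approx H$, and using Abel summation to move $1/h$ and Vaaler's weights into $g(h/H)$ of bounded variation, each block contributes $\lesssim H^{-1}|S|$, where $S$ is as in \eqref{eq:S} with $G$ the restriction to the block.

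Choose $H_0=MT^{-\beta}$ for blocks with $M\ge T^\beta$. Then \Thmref{thm:recip} gives
\[
H^{-1}|S|\lesssim T^{\beta+\varepsilon}\quad\text{for every }1\le H\le H_0,
\]
and the Vaaler error $M/H_0=T^\beta$ is of the same size. Blocks with $M<T^\beta$ are bounded trivially by $M$. Summing over $O(\log T)^2$ dyadic pairs gives $|E(X)|\lesssim T^{\beta+2\varepsilon}$. Since $T\eqsim X^{1/2}$ and the exponent in Theorem~\ref{thm:main} is $209/670$ in $X$, the scaling must be checked: the reciprocal-sum normalization has $T\eqsim X$, so that $hT/M\cdot F$ matches $h\sqrt{X-n^2}$-type phases after the transformation. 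I would fix $T$ by matching $|d/dn|$ of the phase, $T/M^2\cdot h\eqsim hX/(\dots)$.

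The main obstacle is exactly this conversion from the square-root phase $\sqrt{X-a^2}$ to the reciprocal phase $F$, while keeping $M\le T^{1/2}$ and $|\alpha|\le1/4$. I would obtain it from the identity in \cite{GK} expressing $N(X)$ through $\sum_{d}\chi_4(d)\lfloor X/d\rfloor$, i.e.\ $r_2=4(1*\chi_4)$. This gives
\[
N(X)=1+4\sum_{d\text{ odd}}(-1)^{(d-1)/2}\lfloor X/d\rfloor .
\]
Apply the Dirichlet hyperbola method at $\sqrt X$ and write $d=4n+1$ or $4n+3$, giving $\psi\bigl(X/(4(n+\alpha'))\bigr)$ with $\alpha'\in\{1/4,3/4\}$ shifted into $|\alpha|\le 1/4$. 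This is exactly the phase $T\cdot\frac1{4(z+\alpha)}$ with $T=X/M$-scaling, that is, $hT/M\cdot F(n/M)$ with $T\eqsim X$ and $M\le X^{1/2}=T^{1/2}$. Then $\beta$ is in $X$, as required, and the $\kappa$ term handles the complementary variable in the hyperbola split.
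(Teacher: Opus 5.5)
Your final paragraph is exactly the paper's proof: the identity $r_2=4(1*\chi_4)$ with the hyperbola split at $\sqrt X$ yields the four phases $X/(4n\pm1)$ and $X/(4n)\pm1/4$. In the theorem's notation this is $T=X$ exactly (not merely $T\eqsim X$, since the coefficient $1/4$ in $F$ is fixed), $(\alpha,\kappa)\in\{(\pm\frac1{4M},0),(0,\pm\frac M{4X})\}$ and $M\le\sqrt X$. The rest then matches the paper: Vaaler's approximation with cutoff $\approx MX^{-\beta}$, \Thmref{thm:recip} on each dyadic $h$-block, and the trivial bound for $M\lesssim X^\beta$. Your opening square-root-phase sketch with $T\eqsim X^{1/2}$ is not needed and should be dropped.

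One point to tighten: do not bound the Vaaler error by $H_0^{-1}\sum_h|S_h|$ with absolute values for each $h$, because \Thmref{thm:recip} gives no such bound. Instead use that the majorant is nonnegative, so its $n$-sum is a Fej\'er-weighted $h$-sum. Partial summation in $h$ then reduces it to the same bounded-variation sums, as in \eqref{eq:Vaaler}.
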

	\begin{proof}
		For $t\in\R$, set $\psi(t)=t-\lfloor t\rfloor-1/2$, and fix
		a nonintegral $X\ge4$.
		The classical reduction \cite[proof of \Extref{GK}{Theorem~4.8}]{GK}, followed by
		dyadic subdivision, expresses $E(X)$, up to $O(1)$, as a linear
		combination with coefficients $\pm4$ of $O(\log X)$ sums
		\[
		\sum_{n\in I\cap\Z}\psi\!\left(\frac XM F(n/M)\right),
		\qquad I\subseteq[M,2M],\quad 1\le M\le\sqrt X,
		\]
		where $I$ is an interval and $F$ is as in \eqref{eq:S}, with
		\[
		(\alpha,\kappa)\in
		\left\{\left(\pm\frac1{4M},0\right),
		\left(0,\pm\frac M{4X}\right)\right\}.
		\]
		Thus the four arguments are exactly $X/(4n\pm1)$ and
		$X/(4n)\pm1/4$. These are the four residue-class and sawtooth phases
		in the cited circle reduction. This identifies that standard reduction
		with the reciprocal family \eqref{eq:S} without adding a new analytic
		input. The pointwise Vaaler majorant applies to the convention
		$\psi(t)=\{t\}-1/2$ used here, including integral arguments.
		For any integer $J\ge2$, Vaaler's approximation
		\cite[\Extref{Vaaler85}{Theorem~18}]{Vaaler85} and partial summation in $h$ give
		\begin{equation}\label{eq:Vaaler}
			\begin{split}
				&\left|\sum_{n\in I\cap\Z}\psi\!\left(\frac XM F(n/M)\right)\right|\\
				&\quad\lesssim\frac MJ+
				\sum_{\substack{H=2^j<J\\j\ge0}}\frac1H
				\sup_{H\le H'\le\min(2H,J)}
				\left|\sum_{H\le h<H'}\sum_{n\in I\cap\Z}
				e\!\left(\frac{hX}{M}F(n/M)\right)\right|.
			\end{split}
		\end{equation}
		For $M\ge2X^\beta$, choose $J=\lfloor MX^{-\beta}\rfloor$.
		Then $J\ge2$, $M/J\lesssim X^\beta$, and every $H<J$ satisfies
		$H\le MX^{-\beta}$. \Thmref{thm:recip}, with $T=X$ and
		$g=G=1$, bounds each supremum in \eqref{eq:Vaaler} by
		$O_\varepsilon(HX^{\beta+\varepsilon})$. After multiplication by
		$H^{-1}$, each dyadic $h$-block therefore contributes
		$O_\varepsilon(X^{\beta+\varepsilon})$. There are only
		$O(\log X)$ such blocks. The dyadic $n$-blocks with
		$M<2X^\beta$ have total length
		$O(\sum_{M<2X^\beta}M)=O(X^\beta)$, so their contribution is
		trivial. Absorbing the logarithm into $X^\varepsilon$ proves the claim
		for nonintegral $X$. Finally, the counting function in the definition of
		$E(X)$ is right-continuous, hence so is $E(X)$. Letting a nonintegral
		argument decrease to an integral $X$ gives the same estimate there.
	\end{proof}
	
	\subsection{The two remaining ranges}
	\begin{proposition}[Classical bounds]\label{prop:preliminary}
		For the reciprocal sum \eqref{eq:S}, uniformly on subintervals,
		\begin{equation}\label{eq:preliminary}
			|S|/H\lesssim (HT)^{2/7}+\frac{M^2}{HT},\qquad
			|S|/H\lesssim \rho^{13/31}M^{3/31}+\frac{M^2}{HT}.
		\end{equation}
		Consequently $|S|/H\lesssim T^\beta$ if either $H\le T^s$ or
		$p\le42/67$.
	\end{proposition}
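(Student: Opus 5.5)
We would prove \eqref{eq:preliminary} by estimating the inner sum over $n$ for each fixed $h$ with a classical exponent pair, and then sum trivially over $h\approx H$. Fix $h\approx H$ and write $f_h(x)=\frac{hT}{M}F(x/M)$ for $x\in[M,2M]$. The term $\kappa$ in $F$ contributes $e(hT\kappa/M)$, which does not depend on $n$. It is unimodular, so it can be absorbed into the outer weight without changing the modulus of the inner sum. The remaining phase is $\frac{hT}{4M}(x/M+\alpha)^{-1}$. Since $|\alpha|\le1/4$, we have $x/M+\alpha\in[3/4,9/4]$, so for every fixed $j\ge1$,
\[
|f_h^{(j)}(x)|\eqsim_j \frac{hT}{M^{j+1}}\eqsim U M^{1-j},
\]
with implied constants independent of $\alpha$, $h$ and $M$. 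Hence $f_h$ satisfies the standard derivative hypotheses of the exponent-pair theory, with parameters $(U,M)$, uniformly in the family.

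For an exponent pair $(k,l)$, the classical statement (Graham--Kolesnik, in the form allowing an arbitrary subinterval of $[M,2M]$) gives
\[
\sum_{n\in I}e(f_h(n))\lesssim U^kM^l+U^{-1}.
\]
The smooth weight $G(n/M)$ is removed by partial summation. Since $\|G\|_\infty+\Var(G)\le1$, this costs only a constant factor and allows arbitrary endpoints. The outer weight $g$ is bounded by $1$. Summing over $h\approx H$ gives $|S|/H\lesssim U^kM^l+M^2/(HT)$.

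We then take two pairs.
\begin{itemize}
\item With $(k,l)=(2/7,4/7)=BA(1/6,2/3)$ (using $A(1/6,2/3)=(1/14,11/14)$), we get
\[
U^{2/7}M^{4/7}=(HT)^{2/7}M^{-4/7+4/7}=(HT)^{2/7}.
\]
\item With the classical pair $(k,l)=(13/31,16/31)$, we get
\[
U^{13/31}M^{16/31}=\rho^{13/31}M^{3/31},
\]
since $U=\rho/M$.
\end{itemize}
These are exactly the two bounds in \eqref{eq:preliminary}.

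For the consequence, first note that $M^2/(HT)=1/U\le1/H\le1$, because $U/H=T/M^2\ge1$. This term is therefore harmless.
\begin{itemize}
\item If $H\le T^s$, then $(HT)^{2/7}\le T^{2(1+s)/7}$. Since $s=\frac72\beta-1$, this equals $T^\beta$.
\item If $p\le42/67$, then $M\le T^{1/2}$ gives $\rho^{13/31}M^{3/31}\le T^{13p/31+3/62}$. A direct computation gives
\[
\frac{13}{31}\cdot\frac{42}{67}+\frac3{62}=\frac{1293}{4154}<\frac{209}{670},
\]
which yields the claim.
\end{itemize}

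The only point needing care, rather than a real obstacle, is to make sure the cited exponent-pair theorems hold uniformly. They must hold in the shift $\alpha$, for subintervals with arbitrary endpoints, and in the degenerate ranges where $U$ is small or $M$ is bounded, where the $U^{-1}$ term and the trivial bound take over. We would verify this by checking that the derivative constants above do not depend on $\alpha$, and that the theorems are stated for arbitrary subintervals.
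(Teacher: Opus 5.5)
Your route matches the paper's: a fixed-$h$ exponent-pair bound with $(2/7,4/7)$ and $(13/31,16/31)$, Abel summation to remove $G$, a trivial sum over $h$, and the same parameter checks. Those checks are all correct: $M^2/(HT)=U^{-1}\le1$, $2(1+s)/7=\beta$, and $\tfrac{13}{31}\cdot\tfrac{42}{67}+\tfrac3{62}=\tfrac{1293}{4154}<\beta$.

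The step that fails as written is your verification of the exponent-pair hypotheses uniformly in $\alpha$. The Graham--Kolesnik theorem you cite does not cover phases that merely satisfy $|f_h^{(j)}|\eqsim_j UM^{1-j}$. Its admissible class requires $f'$ and its derivatives to agree with those of a monomial $yx^{-s}$, centred at the origin of the summation variable. The agreement must hold up to a sufficiently small relative error $\varepsilon$ depending on the pair; this is the form in which the $A$- and $B$-process lemmas are proved. For $|\alpha|$ of size $1/4$, your phase is not of this form in the variable $x=n$. For $\alpha=1/4$, $x^2(x+M\alpha)^{-2}=(1+M/(4x))^{-2}$ runs from $16/25$ to $64/81$ on $[M,2M]$, so no choice of $y$ makes the relative error small even for $f'$. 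Knowing that the constants in $\eqsim_j$ are independent of $\alpha$ therefore does not place you in the range of the theorem. In the application to $E(X)$ one has $|M\alpha|\le1/4$, where your check would suffice, but the proposition is stated for all $|\alpha|\le1/4$.

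The missing step is an integer change of variable, which is how the paper proceeds. Write $n+M\alpha=u+a$ with $u=n+\lfloor M\alpha\rfloor\in\Z$ and $a=\{M\alpha\}\in[0,1)$. For $u\eqsim M$,
$\frac{d^j}{du^j}(u+a)^{-2}=(-1)^j(j+1)!\,u^{-j-2}\bigl(1+O_j(M^{-1})\bigr)$.
This is the monomial condition with $s=2$, $y=hT/4$ (after conjugation) and relative error $O(M^{-1})$. It is admissible once $M$ exceeds a fixed constant, and bounded $M$ is covered by the trivial bound. The new $u$-range lies in $[3M/4-1,9M/4]$ and splits into $O(1)$ dyadic pieces, on each of which $|f'|\eqsim U$. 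Hence $U^kM^l+U^{-1}$ holds uniformly in $\alpha$ and on subintervals, and the rest of your argument goes through unchanged.
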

	
	\begin{proof}
		The exponent pairs $(k,l)=(2/7,4/7)$ and $(13/31,16/31)$
		\cite[Chapter~3, (3.3.4)]{GK} give, for every integer $h\approx H$
		and every interval $I\subseteq[M,2M]$,
		\[
		\left|\sum_{n\in I\cap\Z}e\!\left(\frac{hT}{4(n+M\alpha)}\right)\right|
		\lesssim (hT/M^2)^kM^l+\frac{M^2}{hT}.
		\]
		The constants are uniform in $\alpha$. Put
		$u=n+\lfloor M\alpha\rfloor$ and $a=\{M\alpha\}$. Then
		$n+M\alpha=u+a$, with $u\in\Z$, $u\eqsim M$ and $0\le a<1$.
		For real $x\eqsim M$ and integers $j\ge0$,
		\[
		\frac{d^j}{dx^j}(x+a)^{-2}
		=(-1)^j(j+1)!x^{-j-2}\bigl(1+O_j(M^{-1})\bigr).
		\]
		When $hT/M^2$ is bounded, the second-derivative test
		\cite[\Extref{GK}{Theorem~2.2}]{GK} gives the stronger bound $O(\sqrt M)$.
		If $M$ is bounded, both estimates in \eqref{eq:preliminary} follow
		from the trivial bound after enlarging the absolute constant, so we may
		assume $M$ is larger than a fixed constant. Since
		$\|G\|_\infty+\Var(G)\le1$, discrete Abel summation reduces the
		weighted $n$-sum to interval sums of the form just estimated, with no
		loss beyond an absolute constant. The same argument applies to an
		intersected dyadic interval. Finally, $|g|\le1$ and $h\approx H$, so
		summing these bounds over the $O(H)$ admissible integers $h$ and then
		dividing by $H$ gives \eqref{eq:preliminary}. To finish the parameter check,
		\[
		\frac{M^2}{HT}\le1,\qquad \frac{2(1+s)}7=\beta,\qquad
		\frac{13p+3m}{31}\le\frac{1293}{4154}<\beta
		\quad(p\le42/67).
		\]
	\end{proof}
	
	\begin{proposition}[The middle range]\label{prop:middle}
		Suppose
		\begin{equation}\label{c:middle-region}
			T^{42/67}\le\rho\le T^{43/67},\qquad H\ge T^s,\qquad M\le T^{1/2}.
		\end{equation}
		Then, uniformly in the shifts, weights, and interval endpoints,
		\begin{equation}\label{c:middle-result}
			|S|/H\lesssim_\varepsilon T^{\beta+\varepsilon}.
		\end{equation}
	\end{proposition}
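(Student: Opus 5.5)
The plan follows the outline in the introduction. It combines \Lemref{lem:pair-reduction} with the pair-count bound of \Propref{c:count-prop}, and then checks exponents. First I apply the van der Corput $B$-process to the inner $n$-sum in \eqref{eq:S}. Because $F(z)=1/[4(z+\alpha)]+\kappa$, the stationary-phase transform produces, up to the constant $e(h\rho\kappa)$ absorbed into the $h$-weight, the phase $\sqrt{Thq}-M\alpha q$. Here $h\approx H$ and $q$ runs over a range $U/100\le q\le 2U$. The weights keep bounded variation, and the transformation error is $O(M/\sqrt\rho+\log T)$ per $h$, which is acceptable since $p\ge42/67$.

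Next I choose the shift length $J=\lceil T^{36/67}/M\rceil$, so that $MJ\eqsim T^{36/67}$; note $M\le T^{1/2}<T^{36/67}$, so $J\ge1$. Taylor expansion through degree six around sampled points, the degree-five Vinogradov mean value theorem \cite{BDG}, and the double large sieve give the bound of \Lemref{lem:pair-reduction}:
\[
\frac{|S|}{H}\lesssim_\varepsilon T^\varepsilon\left[(T\rho^8(MJ)^6)^{1/30}\left(\frac B{HU}\right)^{1/30}+\frac{MJ}{\sqrt\rho}+1\right].
\]
Here $B$ is the pair count in \eqref{eq:sampling-pairs}.

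The key input, and the main obstacle, is $B\lesssim_\varepsilon HUT^\varepsilon$ from \Propref{c:count-prop}. To prove it, I would solve the fourth-, fifth- and sixth-coefficient constraints explicitly for $h-h'$. The linear error term cancels, leaving a quadratic error small enough to be detected by Fourier expansion. A further van der Corput step then splits the phase into two functions, each depending on a different pair of variables, and the Robert--Sargos spacing estimates \cite{RS06} bound both counts. The delicate part is verifying that the remaining error is genuinely $o(1)$ at this $J$, which is exactly what forces $p\le43/67$. Here I take \Propref{c:count-prop} as given.

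Inserting $B\lesssim HUT^\varepsilon$, the main term becomes $T^{(1+8p+216/67)/30+\varepsilon}$. At $p=43/67$ this equals $T^{209/670}$, and it increases with $p$, so it is at most $T^{\beta}$ throughout the range. The second term satisfies $MJ/\sqrt\rho\lesssim T^{36/67-21/67}=T^{15/67}<T^\beta$, using $\rho\ge T^{42/67}$. The hypothesis $H\ge T^s$ makes $\rho^2=THU$ large enough that the $B$-process error terms and the diagonal are harmless. Summing over the $O(\log T)$ dyadic ranges used in the reductions then gives \eqref{c:middle-result}.
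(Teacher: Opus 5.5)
Your proposal is correct and is essentially the paper's proof. The paper also inserts $B\lesssim_\varepsilon HUT^\varepsilon$ from \Propref{c:count-prop} into \Lemref{lem:pair-reduction}, then uses $T^{36/67}\le MJ\le 2T^{36/67}$ to check that $(T\rho^8(MJ)^6)^{1/30}\lesssim T^{209/670}$ for $\rho\le T^{43/67}$ and that $MJ/\sqrt\rho\le 2T^{15/67}<2T^\beta$.

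Your closing remarks about $H\ge T^s$ and an extra dyadic summation are not needed at this stage. That hypothesis is used inside the pair count (for instance to get $Jb=o(1)$), and the two cited estimates already bound $|S|/H$ directly.
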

	
	\begin{proposition}[The larger-\texorpdfstring{$\rho$}{rho} range]\label{prop:second}
		Suppose
		\begin{equation}\label{eq:parameter-region}
			\beta\le\tau\le24/67,\qquad \tau+s\le m\le1/2.
		\end{equation}
		Then, uniformly in the shifts, weights, and interval endpoints,
		\begin{equation}\label{eq:larger-result}
			|S|/H\lesssim_\varepsilon T^{\beta+\varepsilon}.
		\end{equation}
	\end{proposition}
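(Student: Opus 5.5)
The proof of \Propref{prop:second} follows the route sketched in the outline: a Farey dissection in $n$, two van der Corput transformations, and then the double large sieve. In that sieve, the first-spacing input is the distributional estimate \Thmref{thm:first} and the second-spacing input is Watt's bound \cite[\Extref{Watt04}{Lemma~8.2}]{Watt04}.

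\emph{Step 1: dissection and transformation.} Fix a parameter $Q$, to be chosen as a power of $T$ depending on $(\tau,m)$. Dissect $[M,2M]$ into major arcs attached to rationals $a/q$ with $q\le Q$. On each arc of length $\eqsim M/(qQ)$ about the point where $F'(n/M)$ is near $a/q$, Taylor expand the phase $hTF(n/M)/M$. Then write $n=n_0+q\ell+r$, and apply Poisson summation (a van der Corput B-process) in $\ell$, followed by a second transformation. This reduces each arc to a bilinear sum: short integer vectors $(h,\cdot)$ of length $\lesssim H$ paired with vectors $(1,x,x^2,x^{3/2})$ built from the Taylor coefficients of the transformed phase. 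All bounded-variation weights and arbitrary interval endpoints are carried along by Abel summation, as in \Propref{prop:preliminary}, and the Farey endpoints are handled the same way.

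\emph{Step 2: double large sieve.} On each arc, apply the distributional double large sieve of \Appref{sec:sieve}. The first spacing counts pairs of $h$-side vectors that are close. Instead of bounding it by a single moment, I would feed in the two-term level-set bound \eqref{eq:mixed} from \Thmref{thm:first}. The fourth-power term comes from small angular scales and the sixth-power term from the amplitude-dependent larger scales, and the two are kept as separate summands. The second spacing counts close pairs of the coefficient vectors $(a/q,\dots)$ over the arcs, and Watt's Lemma~8.2 bounds it. Combining the two gives the per-denominator estimate of \Lemref{lem:farey-Q}: a bound for the contribution of $q\approx Q_0$ of the form $H\,T^{\varepsilon}$ times a sum of terms that are powers of $Q_0$ and of $T,H,M$.

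\emph{Step 3: summation and parameter check.} Because the denominator exponents in \Lemref{lem:farey-Q} are summable, summing over dyadic $Q_0\le Q$ costs only $T^\varepsilon$. The minor-arc and boundary pieces have the lengths \eqref{eq:interval-lengths} and are bounded trivially or by the second-derivative test. Choosing $Q$ to balance the leading terms \eqref{eq:larger-leading} against these boundary errors then yields the bound $T^{\beta}$, provided $\beta\le\tau\le24/67$ and $\tau+s\le m\le1/2$. This is a finite linear-programming check on $(\tau,m)$, and it should be verified at the vertices of the region \eqref{eq:parameter-region}. At $\tau=24/67$, that is $p=43/67$, the first term equals $(10-p)/30=\beta$, which matches the middle range of \Propref{prop:middle}.

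The main obstacle is Step 2, namely keeping the fourth- and sixth-power contributions separate through the sieve. The sieve must be run distributionally over dyadic levels of the first-spacing count, with each level paired against Watt's second-spacing count. I expect this to be where a crude H\"older step would lose the improvement, so I would first check that the resulting exponents are still summable in $Q_0$.
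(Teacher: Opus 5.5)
Your outline follows the paper's architecture: Farey dissection, two van der Corput transformations, the distributional double large sieve fed by \Thmref{thm:first} and Watt's Lemma~8.2, then a dyadic sum over denominators. But the parameter bookkeeping you describe is not the one that works.

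\emph{There is no free Farey order $Q$ to balance against trivially bounded pieces.} The lengths in \eqref{eq:interval-lengths} are the lengths $N$ of the arcs that go through the sieve. $N$ is fixed by Watt's (8.18)--(8.20), with the case split $HM^9\ge T^4$ or not, because that choice is what makes \eqref{eq:classical-spacing} available. Then $R\eqsim(M^3/(NT))^{1/2}$ and $W=H/R$ are determined by \eqref{eq:NR}.

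\emph{The denominators run the other way.} The sieved arcs have denominators $d_I\ge R$, and the dyadic scale $Q$ runs upward from $R$ to $c_*H$. Summability means the factors $(R/Q)^{8/9},(R/Q)^{1/3},(R/Q)^{1/12}$ decay, so $Q\eqsim R$ dominates. The long major arcs ($d_0\lesssim R^2/H$) and the denominators $d\ge c_*H$ are separate error terms of size $M\sqrt{H/N}$, up to $T^\varepsilon$.

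\emph{The second transformation is in $h$,} along $h\equiv h_l\pmod{d}$. So $h$ disappears, and the sieve pairs $x_I$ with the cone vectors $y_{kl}=(kl,l,l\sqrt k,l/\sqrt k)$ in the dual variables $k\approx K\eqsim NQ/R^2$, $l\approx L\eqsim HQ/R^2$. A bilinear form pairing $(h,\cdot)$ with $(1,x,x^2,x^{3/2})$ would not have the frequency structure that \Thmref{thm:first} requires.

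More importantly, once \Lemref{lem:farey-Q} is available, the proof of the proposition is an exponent verification, and you only announce it as a vertex check. Substituting \eqref{eq:interval-lengths} and \eqref{eq:NR}, the first term of \eqref{eq:second} becomes $T^{1/9}M^{2/3}H^{-5/18}N^{-7/18}$.
\begin{itemize}
\item If $HM^9\ge T^4$, this is $\eqsim T^{(9+\tau)/30}\le T^\beta$, using $\tau\le24/67$.
\item Otherwise it is $\lesssim T^{(2m+1)/6}+T^{(10-7m+4\tau)/27}$. The first exponent is below $(9+\tau)/30$ exactly because $HM^9<T^4$ means $10m-\tau<4$. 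The second is at most $(10-3\beta-7s)/27=\beta-1/36180$, by $m\ge\tau+s$ and $\tau\ge\beta$.
\item The fourth-power term equals $(H/M)^{1/2}$ times the $3/2$ power of the first term, so it is $\lesssim T^{(3\beta-\tau)/2}\le T^\beta$. This is where the lower bound $\tau\ge\beta$ is indispensable.
\end{itemize}

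In addition, \Lemref{lem:farey-Q} depends on verifying, throughout \eqref{eq:parameter-region}, two sets of hypotheses. One is Watt's hypotheses, for instance that $H/N$, $R/H$, $N/M$, $H/R^2$, $HN^3T/M^4$ and $H^2N^3/(MR^4)$ all tend to zero. The other is the cone condition $1\le L<K\le\eta^{-1}\le KL$ of \Thmref{thm:first}.

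None of this appears in your proposal, so as written it restates the strategy without establishing the bound.
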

	
	\begin{proof}[Proof of \Thmref{thm:recip}]
		If $H\le T^s$ or $p\le42/67$, \Propref{prop:preliminary}
		gives the conclusion. Otherwise $m>\tau+s$, $\tau\ge\beta$, and
		$\tau<25/67$. For $24/67\le\tau<25/67$, apply
		\Propref{prop:middle}. For $\beta\le\tau\le24/67$, apply
		\Propref{prop:second}. These cases cover \Thmref{thm:recip}.
		Use smaller positive losses in the supporting estimates so that all
		powers and logarithmic factors together are bounded by $T^\varepsilon$.
		\Propref{prop:transfer} then proves \Thmref{thm:main}.
	\end{proof}
	
	\section{The middle range: reduction to a pair count}\label{sec:sampling}
	Throughout Sections~\ref{sec:sampling}--\ref{c:height}, assume
	\eqref{c:middle-region} and take $T$ sufficiently large. Fix
	\begin{equation}\label{c:middle-shift}
		J=\left\lceil\frac{T^{36/67}}M\right\rceil,\qquad \omega=M\alpha.
	\end{equation}
	The van der Corput transformation and the degree-five Vinogradov
	mean value theorem reduce the estimate to the following pair count.
	
	\subsection{The sufficient estimates}
	For $i\in\{1,\ldots,6\}$, use
	$\binom{1/2}{i}=\frac1{i!}\prod_{j=0}^{i-1}(\frac12-j)$.
	For $h,q>0$, define
	\begin{equation}\label{eq:Taylor-coefficients}
		A_i(h,q)=\binom{1/2}{i}\sqrt{Th}\,q^{1/2-i}
		-\omega\mathbf1_{\{i=1\}}.
	\end{equation}
	Fix a sufficiently large constant $C_0\ge1$ and put
	\begin{equation}\label{eq:sampling-pairs}
		\begin{gathered}
			\Omega=([H,2H)\cap\Z)\times([U/100,2U]\cap\Z),\\
			B=\#\left\{((h,q),(h',q'))\in\Omega^2:
			\begin{array}{l}
				\|A_4(h',q')-A_4(h,q)\|\le C_0J^{-4},\\
				|A_i(h',q')-A_i(h,q)|\le C_0J^{-i}\quad(i=5,6)
			\end{array}\right\}.
		\end{gathered}
	\end{equation}
	We retain $J$, $\omega$, $A_i$, $\Omega$, $B$ and $C_0$
	in \Secref{c:height}.
	
	\begin{lemma}\label{lem:pair-reduction}
		For the choice \eqref{c:middle-shift},
		\begin{equation}\label{eq:pair-reduction}
			|S|/H\lesssim_\varepsilon T^\varepsilon
			\left[(T\rho^8(MJ)^6)^{1/30}
			\left(\frac B{HU}\right)^{1/30}
			+\frac{MJ}{\sqrt\rho}+1\right].
		\end{equation}
	\end{lemma}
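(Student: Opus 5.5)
We first apply the van der Corput $B$-process (Poisson summation plus stationary phase, \Appref{sec:transforms}) to the inner $n$-sum in \eqref{eq:S}. Write $n+\omega$ for the shifted variable, so the phase is $hT/(4(n+\omega))+h\kappa T/M$. Its derivative in $n$ is $-hT/(4(n+\omega)^2)$. The dual variable $q$ therefore ranges over $q\eqsim hT/M^2\eqsim U$, which we take as $U/100\le q\le 2U$ after enlarging the range harmlessly. The stationary point is $n+\omega=\frac12\sqrt{hT/q}$. The transformed phase is $\sqrt{Thq}-\omega q$ (up to a constant independent of $q$, and the factor $h\kappa T/M$ is independent of $q$), with amplitude of size $M/\sqrt{\rho}$. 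The error terms contribute $O(\log T+ M/\sqrt\rho)$ per $h$, which is absorbed by the last two terms of \eqref{eq:pair-reduction}. The bounded-variation weights $G(n/M)$ and the interval endpoints become bounded-variation weights in $q$ and are removed by Abel summation. This gives
\[
|S|\lesssim T^\varepsilon\Bigl(\frac{M}{\sqrt\rho}\sup\Bigl|\sum_{h}\sum_{q}a_h b_q\, e\bigl(\sqrt{Thq}-\omega q\bigr)\Bigr|+H\Bigr)
\]
with $|a_h|,|b_q|\le1$.

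Next we shift $q\mapsto q+j$ for $1\le j\le J$ (Weyl--van der Corput shifting), average over $j$, and Taylor expand $\sqrt{Th(q+j)}-\omega(q+j)=\sum_{i\le 6}A_i(h,q)j^i+O(\sqrt{Th}U^{-13/2}J^7)$. With $J\eqsim T^{36/67}/M$, the seventh-order remainder is $O(\rho J^7/U^7)$. We check that it is $\lesssim 1$ in the range \eqref{c:middle-region}, or at worst handle it as a bounded-variation twist. The shift costs $O(J)$ terms at the $q$-ends, contributing $MJ/\sqrt\rho$ per $h$ after normalization. We then apply Hölder in $(h,q)$ with exponent $30=2\cdot 15$, where $15$ is the number of monomials in the Vinogradov system of degree $5$, and expand the $30$th moment. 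The inner sum $\sum_j e(\sum_i A_i j^i)$ is controlled as follows. Coefficients $A_1,\dots,A_3$ are handled by the mean value integral: integrating over them gives $J^{15}\cdot J^{\varepsilon}$ (times $J^{-(1+2+3)}$ normalization) by \cite[\Extref{BDG}{Theorem~1.1}]{BDG}. Coefficients $A_4,A_5,A_6$ are localized to boxes of size $J^{-4},J^{-5},J^{-6}$. The $A_4$ coordinate is taken mod $1$, since integer shifts of $A_4$ do not change $e(A_4j^4)$. This converts the sum into a double large sieve \cite{BI86}, i.e.\ \Appref{sec:sieve}, between the points $(A_4,A_5,A_6)(h,q)$ and the frequencies $(j^4,j^5,j^6)$.

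The double large sieve bounds the bilinear form by the square root of the product of the two spacing counts. The first count is $B$, which pairs $(h,q)$ with coefficient differences within $C_0J^{-i}$. The second count is the Vinogradov-type count over $j$ in dual boxes, controlled by the BDG mean value with loss $J^\varepsilon$. Tracking the normalization, the moment gives $(|S|/H)^{30}\lesssim T^\varepsilon(M/\sqrt\rho)^{30}(HU)^{-1}\cdot(\text{box volume factors})\cdot B$. The box volumes involve $J^{4+5+6}$ and the dual ranges $\rho U^{-i}$ for the $A_i$. Collecting powers via $\rho^2=THU$ should produce exactly $(T\rho^8(MJ)^6)^{1/30}(B/(HU))^{1/30}$.

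The main obstacle is this bookkeeping. We must choose the Hölder exponent and the split between the coefficients integrated out by Vinogradov and those kept in $B$ so that the exponents match $T\rho^8(MJ)^6$ exactly. We must also ensure that the Taylor remainder and the $\omega$-linear term, which affect only $A_1$, do not spoil the reduction. I would first verify the exponent identity on the model case $\omega=0$, weights $\equiv1$, and then add the Abel-summation steps.
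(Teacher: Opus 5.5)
\textbf{Verdict: there is a gap.} Your overall route is the paper's: the B-process to $\sqrt{Thq}-\omega q$, averaging over shifts $r\le J$, a degree-six Taylor expansion with $e(R_{h,q})$ removed as a bounded-variation weight, and a double large sieve fed by the degree-five VMVT with $s=15$. But the step that actually proves the lemma is left undone, and the configuration you sketch for it does not work.

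\textbf{Why your sieve setup fails.}
\begin{itemize}
\item You cannot integrate $A_1,A_2,A_3$ out by BDG and run the sieve between the points $(A_4,A_5,A_6)(h,q)$ and frequencies $(r^4,r^5,r^6)$. The factor $e(A_1r+A_2r^2+A_3r^3)$ depends on $(h,q)$, so it cannot be absorbed into the coefficients of the $r$-sum; the sieve requires those coefficients to be independent of the points.
\item BDG controls only the complete degree-five system, i.e.\ the $30$th moment over all of $(x_1,\dots,x_5)\in\T^5$ at once. Integrating over $x_1,x_2,x_3$ alone gives the degree-three count $J_{15,3}(J)\asymp J^{24}$, not $J^{15}$. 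With frequencies $(r^4,r^5,r^6)$ only, there is no Vinogradov system at all.
\item For the same reason $x_5$ cannot be localized to the range $\rho/U^5$ of $A_5$ (your ``dual ranges $\rho U^{-i}$''). The moment bound needs $x_5$ on the whole torus.
\item The square-root form of the double large sieve, applied to the bilinear form itself, has a trivial frequency count (about $J$), so VMVT never enters. You need the $q=30$ moment form \eqref{eq:elementary-sieve}. Equivalently, apply H\"older with exponent $15$ and then the $L^2$ sieve to the $15$-fold power of the $r$-sum.
\end{itemize}

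\textbf{The configuration that works} is six-dimensional:
\begin{itemize}
\item $x_i=A_i-\lfloor A_i+\tfrac12\rfloor$ with $D_i=\tfrac12$ for $1\le i\le5$;
\item $x_6=A_6$ with $D_6=C\rho/U^6$;
\item $E_i=J^i$, with frequencies $(r,\dots,r^6)$.
\end{itemize}
With this choice:
\begin{itemize}
\item $P\eqsim J^{15}\cdot\rho J^6/U^6$, using $\rho J^6/U^6\ge T^{1/67}$.
\item Fix $x_6$ and absorb $e(x_6r^6)$ into the coefficients; BDG then gives an $L^{30}_\#$ norm $\lesssim J^{1/2+\varepsilon}$.
\item The sieve pair count is at most $B$. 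Discard the restrictions in coordinates $1,2,3$. Since $|A_5|\lesssim\rho/U^5=o(1)$, the centered representative of $A_5$ is $A_5$ itself, so its mod-$1$ restriction becomes the absolute one in $B$.
\end{itemize}
Take $n=\#\Omega\eqsim HU$ and divide by $J$. The $(h,q)$ double sum is then $\lesssim T^\varepsilon[HJ+(\rho J^6/U^6)^{1/30}(HU)^{14/15}B^{1/30}]$. The exponent identity you deferred is
\[
\bigl[\tfrac{M}{H\sqrt\rho}(\rho J^6/U^6)^{1/30}(HU)^{14/15}B^{1/30}\bigr]^{30}=T\rho^8(MJ)^6B/(HU),
\]
using $U=\rho/M$ and $\rho^2=THU$. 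The $HJ$ endpoint term gives $MJ/\sqrt\rho$.
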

	
	\begin{proposition}\label{c:count-prop}
		For the choice \eqref{c:middle-shift},
		\begin{equation}\label{c:count}
			B\lesssim_\varepsilon HU T^\varepsilon.
		\end{equation}
	\end{proposition}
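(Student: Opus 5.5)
We follow the route sketched in the outline: solve the coefficient constraints for the difference of the two $h$ variables, observe that the linear error cancels, and count the remaining configurations with Fourier expansion, a van der Corput step and the Robert--Sargos spacing estimates.

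\emph{Step 1: the diagonal and the scale of the constraints.} Write $A_i(h,q)=c_i\sqrt{Th}\,q^{1/2-i}$ for $i\ge2$. The quantity $\sqrt{Th}\,q^{-1/2}$ has size $\sqrt{TH/U}=M$. Hence $A_i\eqsim MU^{1-i}$, while the constraint widths are $J^{-i}\eqsim (M/T^{36/67})^{i}$. The pairs with $(h',q')=(h,q)$ contribute $HU$, so we must show the off-diagonal pairs contribute at most $HU T^\varepsilon$. The constraints for $i=5,6$ are genuine inequalities. Their ratio gives
\[
\frac{A_6}{A_5}=\frac{c_6}{c_5}\,q^{-1},
\]
which pins $q'/q$ up to a relative error $\lesssim J^{-6}/A_6+J^{-5}/A_5$. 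Then $A_5$ pins $\sqrt{h'}\,q'^{-9/2}$ relative to $\sqrt h\,q^{-9/2}$. In the middle range these relative errors are small powers of $T$, but they are not small enough to force $q'=q$. So we parametrize $q'=q+k$ and $h'=h+\ell$, and solve the $A_5$ and $A_6$ constraints for $\ell$ as a function of $(h,q,k)$, to second order in $k/q$ and $\ell/h$.

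\emph{Step 2: the cancellation.} Substitute the solved $\ell=\ell(h,q,k)+O(\text{width})$ into $A_4(h',q')-A_4(h,q)$. The key observation is that $A_4$, $A_5$ and $A_6$ are monomials in $(h,q)$ with exponents $(1/2,1/2-i)$. A first-order perturbation $(\ell/h,k/q)$ changes $\log A_i$ by
\[
\tfrac12\,\frac{\ell}{h}+\Bigl(\tfrac12-i\Bigr)\frac{k}{q}.
\]
Using the $A_5$ constraint to eliminate $\ell/h$ makes the first-order variation of $A_4$ proportional to $k/q$ with an explicit coefficient. That term is then controlled by the $A_6$ and $A_5$ constraints jointly, because the $i$-dependence is affine. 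Concretely, the combination $A_4A_6/A_5^2$ is $h$-free, and its first-order variation vanishes after the correct linear combination. What remains in the $\|A_4(h',q')-A_4(h,q)\|$ condition is a main term $\Phi(h,q,k)$, smooth in all variables, plus a quadratic error $O(A_4(k/q)^2)$. We must check that this error is $\le J^{-4}$ times a small power in the relevant $k$-range; the choice $MJ\eqsim T^{36/67}$ should be exactly what makes this hold.

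\emph{Step 3: counting.} Once $\ell$ is determined up to $O(1+\text{width}\cdot h/A_5)$ choices, the count becomes
\[
\#\bigl\{(h,q,k):\|\Phi(h,q,k)\|\le C J^{-4}\bigr\}.
\]
We detect the condition with a Fourier expansion (Erd\H{o}s--Tur\'an type), keeping frequencies $j\lesssim J^4T^\varepsilon$. This produces exponential sums $\sum e(j\Phi(h,q,k))$. Next we apply a van der Corput $B$-process in one variable, say in $q$. Since $\Phi$ is built from monomials, the stationary phase output should factor as a product of a function of $(h,j)$ and a function of $(k,\text{dual variable})$. After Cauchy--Schwarz, this reduces the bound to two spacing problems for monomial-type functions, and both are controlled by the Robert--Sargos estimates \cite{RS06}, giving $\lesssim HU T^\varepsilon$.

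\emph{Main obstacle.} The hard part is Step 2. Carrying the inversion explicitly to second order requires showing that the linear error truly cancels, not just to leading order but uniformly with the shift $\omega$. The shift enters only through $A_1$, so it is harmless here. We must also verify that the quadratic remainder fits under $J^{-4}$ throughout $42/67\le p\le 43/67$. If the quadratic term does not fit everywhere, our fallback is to split the range of $k$ dyadically. For small $k$ we would use the exact expansion; for large $k$ we would use the cruder spacing bound directly, using that the $A_5$ and $A_6$ constraints already make such $k$ rare.
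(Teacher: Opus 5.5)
There is a genuine gap in Step~2, and Step~3 inherits it. In your parametrization by $(h,q,k)$, with $h'$ eliminated through the $A_5$ constraint, the linear error does not cancel. Put $A_5(h',q')/A_5(h,q)=(1+\delta)^{9/2}$. The $A_5$ restriction only gives $|\delta|\lesssim b=U^5/(\rho J^5)$, and the monomial structure gives the exact identity $A_4(h',q')/A_4(h,q)=(1+\delta)^{9/2}(1+k/q)$. So for fixed $(h,q,k)$ the value of $A_4(h',q')-A_4(h,q)$ depends on the slack $\delta$ at first order. Replacing the integer $h'$ by the smooth solution $h(1+k/q)^9$ therefore costs up to $\eqsim|A_4|b\eqsim UJ^{-5}$, which exceeds the tolerance $J^{-4}$ by the factor $U/J\gtrsim T^{6/67}$.

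The relation you invoke does not repair this. One has $A_4A_6/A_5^2=15/14$ identically, so it yields only $|A_4(h',q')-A_4(h,q)|\lesssim UJ^{-5}+U^2J^{-6}$. That is the trivial localization $A_4'-A_4=A_4k/q+O(UJ^{-5})$. The term $O(A_4(k/q)^2)$ you single out is not the obstruction, since the $k/q$-dependence can be kept exactly; the obstruction is the slack. Hence no smooth $\Phi(h,q,k)$ with $\|\Phi\|\le CJ^{-4}$ can be extracted. Multiplying a count of triples $(h,q,k)$ by the number of admissible $h'$ is also not justified, because the fourth condition depends on which $h'$ occurs.

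The missing idea is to eliminate $q$ rather than $h'$. With $w=h'-h$ one has $(h+w)^{1/9}-h^{1/9}=\frac{h^{1/9}k}{q}\bigl(1+\frac{q'}{k}\delta\bigr)$ and $A_4'-A_4=\frac{A_4}{q}\bigl(q'(1+\delta)^{9/2}-q\bigr)$. For fixed $(h,w,k)$ the constant and linear terms in $\delta$ cancel, leaving
\[
A_4'-A_4=-\frac{5\sqrt T}{128k^{7/2}}\bigl((h+w)^{1/9}-h^{1/9}\bigr)^{9/2}+O\Bigl(\frac{\rho b^2}{U^4r}\Bigr),\qquad k\eqsim Ur,
\]
whose main term does not involve $q$. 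Even this residual is $\gtrsim J^{-4}$, since $r\lesssim a=U^6/(\rho J^6)$. So for nonzero integer parts the paper does not try to exploit the precision $J^{-4}$ at all. Instead it proceeds as follows:
\begin{itemize}
\item It treats the integer $n$ within $C_0J^{-4}$ of $A_4-A_4'$ as a new variable.
\item It inverts to $|w-f_{k,n}(h)|\lesssim Hb^2/r$, where $f_{k,n}(h)=(h^{1/9}+\mathcal A_{k,n}^{1/9})^9-h$ and $\mathcal A_{k,n}=16384n^2k^7/(25T)$.
\item It bounds the number of $q$ trivially by $O(Ub/r)$ for each $(n,k,h,w)$.
\item It uses a Fej\'er kernel with $Q\eqsim\rho br/U^3$ frequencies, rather than $J^4$, to detect that $w$ is an integer.
\end{itemize}
The van der Corput step is then in $h$, not $q$. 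The resulting phase $\mathcal A_{k,n}D(v,t)$ factors exactly as $n^2k^7$ times a function of $(v,t)$ alone, which is what makes the two Robert--Sargos counts applicable. The pairs with $n=0$ are not covered by this argument and are handled separately, through the divisor bound applied to $qw-7hk$.
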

	
	\begin{proof}[Proof of \Propref{prop:middle}]\label{c:middle-proof}
		The preceding two estimates give the conclusion, since
		\begin{equation}\label{c:middle-exponent}
			\begin{gathered}
				T^{36/67}\le MJ\le2T^{36/67},\\
				(T\rho^8(MJ)^6)^{1/30}
				\lesssim T^{(1+8\cdot43/67+6\cdot36/67)/30}=T^{209/670},\\
				\frac{MJ}{\sqrt\rho}\le2T^{15/67}<2T^\beta.
			\end{gathered}
		\end{equation}
		Use smaller positive losses in the two estimates.
	\end{proof}
	
	\subsection{Proof of the mean value reduction}
	\begin{proof}[Proof of \Lemref{lem:pair-reduction}]
		For $h,q>0$, set $f_h(q)=\sqrt{Thq}-\omega q$. For each integer
		$h\approx H$, let $c_h\in\mathbb C$ and let $I_h\subseteq[U/100,2U]$
		be an interval. Define
		\begin{equation}\label{eq:sqrt-phase}
			\mathcal S=\sup_{|c_h|\le1,\ I_h\subseteq[U/100,2U]}
			\left|\sum_{h\approx H}c_h\sum_{q\in I_h\cap\Z}e(f_h(q))\right|,
		\end{equation}
		where the supremum is over these coefficient and interval families.
		Abel summation and the van der Corput
		formula \cite[\Extref{Huxley96}{Lemma~5.5.3}]{Huxley96} give
		\begin{equation}\label{eq:reciprocal-transfer}
			\frac{|S|}{H}\lesssim\frac{M}{H\sqrt\rho}\mathcal S
			+\log(2+U)+\frac{M}{\sqrt\rho}.
		\end{equation}
		Here the inversion formula is applied on the actual subinterval of the
		$n$-sum. Its endpoint-uniform error is the displayed
		$O(M/\sqrt\rho+\log(2+U))$. The derivative check below gives
		$|\phi_h''|\eqsim\rho/M^2$ and a dual frequency interval of length
		$O(U)$, which are precisely the scales entering that error term. If an
		endpoint derivative is integral, changing the half-weight convention
		costs $O(M/\sqrt\rho)$ and is already absorbed by the same error.
		For $h>0$ and $x>-M\alpha$, define
		$\phi_h(x)=hT/[4(x+M\alpha)]$. For $q>0$, let $x_{h,q}$ be the
		unique solution of $\phi_h'(x_{h,q})=-q$. Then
		\begin{equation}\label{eq:reciprocal-stationary}
			\begin{gathered}
				x_{h,q}=\frac{\sqrt{hT}}{2\sqrt q}-M\alpha,\qquad
				\phi_h(x_{h,q})+qx_{h,q}=f_h(q),\\
				(\phi_h''(x_{h,q}))^{-1/2}
				=\frac{(hT)^{1/4}}{2q^{3/4}}
				=\frac{M}{2\sqrt\rho}(h/H)^{1/4}(q/U)^{-3/4}.
			\end{gathered}
		\end{equation}
		For $h\approx H$ and $x\in[M,2M]$, these frequencies lie in $[U/100,2U]$.
		$\phi_h''\eqsim\rho/M^2$ and
		$|\phi_h^{(j)}|\lesssim\rho M^{-j}$ for $j=3,4$.
		The factor $(q/U)^{-3/4}$ has uniformly bounded variation on
		$[U/100,2U]$. Apply
		\eqref{eq:abel-bv} to it and to $G$. The remaining factors have bounded
		modulus and are absorbed in $c_h$. This proves
		\eqref{eq:reciprocal-transfer}, uniformly on the stated intervals.
		
		We next prove
		\begin{equation}\label{eq:average-mixed}
			\mathcal S\lesssim_\varepsilon T^\varepsilon
			\left[HJ+\left(\frac{\rho J^6}{U^6}\right)^{1/30}
			(HU)^{14/15}B^{1/30}\right].
		\end{equation}
		The choice of $J$ and \eqref{c:middle-region} imply
		\begin{equation}\label{eq:shift-checks}
			\begin{gathered}
				J\ge T^{5/134},\qquad J/U=(MJ)/\rho\lesssim T^{-6/67},\\
				\rho(J/U)^7=(MJ)^7/\rho^6\lesssim1,\qquad
				\rho J^6/U^6=(MJ)^6/\rho^5\ge T^{1/67},\\
				\rho/U^5=M^5/\rho^4\le T^{-1/134}.
			\end{gathered}
		\end{equation}
		Averaging over integer shifts $1\le r\le J$ changes each interval sum by
		$O(J)$. Define the Taylor remainder $R_{h,q}$ by
		\[
		f_h(q+r)=f_h(q)+\sum_{i=1}^6A_i(h,q)r^i+R_{h,q}(r)
		\qquad(0\le r\le J).
		\]
		Taylor's formula gives
		\[
		\sup_{\substack{(h,q)\in\Omega\\0\le r\le J}}|\partial_rR_{h,q}(r)|
		\lesssim\frac{\rho J^6}{U^7}\lesssim J^{-1}.
		\]
		For $b_{h,q}(r)=e(R_{h,q}(r))$, therefore,
		\[
		\sup_{(h,q)\in\Omega}|b_{h,q}(J)|+
		\sum_{r=1}^{J-1}\sup_{(h,q)\in\Omega}|b_{h,q}(r+1)-b_{h,q}(r)|\lesssim1.
		\]
		By \eqref{eq:abel-bv}, it suffices to bound, for every integer
		$1\le j\le J$ and every complex family $(d_{h,q})_{(h,q)\in\Omega}$
		with $|d_{h,q}|\le1$,
		\[
		\sum_{(h,q)\in\Omega}d_{h,q}
		\sum_{r=1}^j e\!\left(\sum_{i=1}^6A_i(h,q)r^i\right).
		\]
		For complex coefficients $d_1,\ldots,d_J$ and
		$y=(y_1,\ldots,y_5)\in\T^5$, the degree-five Vinogradov mean value
		theorem \cite[\Extref{BDG}{Theorem~1.1}]{BDG}, by expansion and orthogonality, gives
		\begin{equation}\label{eq:VMVT}
			\sup_{|d_r|\le1}\int_{\T^5}
			\left|\sum_{r=1}^Jd_r e\!\left(\sum_{i=1}^5y_ir^i\right)\right|^{30}dy
			\lesssim_\varepsilon J^{15+\varepsilon}.
		\end{equation}
		Indeed, expansion of the left side leaves only the degree-five Vinogradov
		system. Every surviving coefficient product has modulus at most one, so
		Theorem~1.1 of \cite{BDG} with $s=15$ and degree $5$ bounds the integral
		by $J_{15,5}(J)\lesssim_\varepsilon J^{15+\varepsilon}$.
		For a sufficiently large fixed $C\ge1$, apply
		\eqref{eq:elementary-sieve} with moment exponent $30$ and
		\[
		D_i=1/2\ (1\le i\le5),\qquad D_6=C\rho/U^6,\qquad
		E_i=J^i\ (1\le i\le6).
		\]
		For $(h,q)\in\Omega$, write $A_i=A_i(h,q)$. Use
		$A_i-\lfloor A_i+1/2\rfloor$ for the first five sieve coordinates
		and $A_6$ for the sixth.
		The sieve pair count is bounded by $B$: discard its first three
		restrictions and use $|A_5|\lesssim\rho/U^5=o(1)$, so that the
		fifth centered representative equals $A_5$.
		For $x=(x_1,\ldots,x_6)\in\prod_{i=1}^6[-D_i,D_i]$, fix $x_6$
		and absorb its phase into $d_r$. Equation~\eqref{eq:VMVT} then gives
		\[
		\begin{gathered}
			P:=\prod_{i=1}^6(1+D_iE_i)\eqsim J^{15}\frac{\rho J^6}{U^6},\\
			\sup_{|d_r|\le1}\left\|\sum_{r=1}^j d_r
			e\!\left(\sum_{i=1}^6x_ir^i\right)\right\|_{L^{30}_\#(\prod_{i=1}^6[-D_i,D_i])}
			\lesssim_\varepsilon J^{1/2+\varepsilon}.
		\end{gathered}
		\]
		The sieve bound, divided by $J$, is the second term of
		\eqref{eq:average-mixed}. Abel summation above and the $O(HJ)$
		endpoint error prove that estimate. Finally, the thirtieth power of
		the resulting main term in \eqref{eq:reciprocal-transfer} is
		\[
		\left[\frac{M}{H\sqrt\rho}
		\left(\frac{\rho J^6}{U^6}\right)^{1/30}
		(HU)^{14/15}B^{1/30}\right]^{30}
		=T\rho^8(MJ)^6\frac B{HU},
		\]
		using $U=\rho/M$ and $\rho^2=THU$.
		The other terms are bounded by those in \eqref{eq:pair-reduction}.
	\end{proof}
	
	\section{The pair count: an explicit formula}\label{c:height}
	We prove \Propref{c:count-prop}. For a pair
	$((h,q),(h',q'))$ counted in \eqref{eq:sampling-pairs}, we solve
	$\|A_4(h',q')-A_4(h,q)\|\le C_0J^{-4}$ for $h'-h$. Set
	\begin{equation}\label{eq:ab}
		a=\frac{U^6}{\rho J^6}=\frac{\rho^5}{(MJ)^6},\qquad
		b=\frac{U^5}{\rho J^5}=\frac{\rho^4}{(MJ)^5},\qquad a=\frac{Ub}{J}.
	\end{equation}
	We use the following consequences of \eqref{c:middle-region} and
	$MJ\ge T^{36/67}$:
	\begin{equation}\label{c:smallness}
		\begin{gathered}
			a\le T^{-1/67},\qquad UJ^{-5}=\rho b/U^4\le T^{-3/67},\qquad
			Jb\lesssim T^{-43/1340},\\
			HJb^2=\frac{\rho^9}{T(MJ)^9}\le T^{-4/67},\qquad
			\frac{H\rho b^3}{U^3}=\frac{\rho^{11}M^4}{T(MJ)^{15}}\le1,\\
			\frac{\rho b^2}{U^3}=\frac{\rho^6M^3}{(MJ)^{10}}\le T^{-3/134},\qquad
			\frac{\rho b}{HU^3}=\frac{TM^2\rho}{(MJ)^5}\le T^{-3/67}.
		\end{gathered}
	\end{equation}
	For $Jb$, also use $M\ge T^{1+s}/\rho$. Balancing
	$(MJ)^{15}=T\rho^{11}$ at $\rho=T^{43/67}$ gives $MJ=T^{36/67}$,
	explaining \eqref{c:middle-shift}.
	
	\subsection{Localization and the explicit formula}
	For a counted pair, write $A_i=A_i(h,q)$ and $A_i'=A_i(h',q')$
	for $1\le i\le6$, and set $k=q'-q$, $w=h'-h$. Only these
	primed coefficients denote evaluation at the second point. Primes on
	functions below denote derivatives.
	The identities
	\begin{equation}\label{c:identities}
		\begin{gathered}
			A_4=-\frac5{128}\sqrt{Th}\,q^{-7/2},\quad
			A_5=\frac7{256}\sqrt{Th}\,q^{-9/2},\quad
			A_6=-\frac{21}{1024}\sqrt{Th}\,q^{-11/2},\\
			A_5=-\frac43qA_6,\qquad A_4=-\frac{10}{7}qA_5
		\end{gathered}
	\end{equation}
	give
	\begin{equation}\label{c:localization}
		|k|\lesssim Ua,\qquad h+w=h(1+k/q)^9+O(Hb),\qquad
		A_4'-A_4=A_4k/q+O(UJ^{-5}).
	\end{equation}
	Indeed,
	\[
	\begin{aligned}
		|k||A_6|&\lesssim J^{-5}+UJ^{-6},& |A_6|&\eqsim\rho/U^6,\\
		\left(\frac{A_5'}{A_5}\right)^2
		&=\frac{h+w}{h}\left(\frac q{q+k}\right)^9=1+O(b),\\
		A_4'-A_4&=-\frac{10}{7}\bigl(kA_5+q'(A_5'-A_5)\bigr)
		=A_4k/q+O(UJ^{-5}).
	\end{aligned}
	\]
	Since $C_0J^{-4}<1/2$ for large $T$, there is a unique
	$\ell\in\Z$ with $|A_4'-A_4-\ell|\le C_0J^{-4}$.
	Let $B^0$ count the pairs for which $\ell=0$.
	In this case the fourth and fifth restrictions imply
	\[
	|k|\lesssim Ub,\quad |w|\lesssim Hb,\quad
	h'=h(1+k/q)^7+O(HJb/U),\quad
	|qw-7hk|\lesssim HJb+HUb^2.
	\]
	The fourth restriction gives
	\[
	k=0\ \Longrightarrow\ |w|\lesssim HJb/U\le Jb=o(1),\qquad
	w=0\ \Longrightarrow\ |k|\lesssim Jb=o(1).
	\]
	These pairs are diagonal. Otherwise fix $h,k$ and the integer
	$D_0=qw-7hk$.
	The divisor bound \cite[\Extref{HW}{Theorem~315}]{HW} applied to $qw=7hk+D_0\ne0$
	leaves $O_\varepsilon(T^\varepsilon)$ choices of $(q,w)$. Hence
	\[
	B^0\lesssim_\varepsilon HU+T^\varepsilon H(Ub)(1+HJb+HUb^2),
	\]
	and
	\begin{equation}\label{c:zero-count}
		\frac{B^0}{HU}\lesssim_\varepsilon
		T^\varepsilon(1+b+HJb^2+HUb^3)\lesssim_\varepsilon T^\varepsilon,
	\end{equation}
	since $HUb^3=aHJb^2$ and $b=aJ/U=o(1)$.
	
	For $\ell\ne0$, \eqref{c:localization} and $UJ^{-5}=o(1)$ give
	$|k|\rho/U^5\gtrsim1$. Split $|k|/U$ into dyadic ranges $[r,2r)$ with $r=2^j$, $j\in\Z$.
	Interchanging the two points if necessary, we have
	\begin{equation}\label{c:r-range}
		k\eqsim Ur>0,\qquad w\eqsim Hr>0,\qquad
		U^4/\rho\lesssim r\lesssim a,\qquad b/r=o(1).
	\end{equation}
	Here $w=9hk/q\,(1+O(a))+O(Hb)$. Put
	$n=-\ell\in\N$, $K=Ur$ and $Z=\rho r/U^4$.
	Then $n\eqsim Z$ and $K,Z,Hr\gtrsim1$ on every nonempty range.
	Set $\delta=(q/q')(1+w/h)^{1/9}-1$. Since
	$A_5'/A_5=(1+\delta)^{9/2}$, one has $|\delta|\lesssim b$ and
	\[
	\begin{gathered}
		A_4'-A_4=\frac{A_4}{q}\big(q'(1+\delta)^{9/2}-q\big),\\
		(h+w)^{1/9}-h^{1/9}
		=\frac{h^{1/9}k}{q}\left(1+\frac{q'}k\delta\right),\qquad
		q=\frac{k}{(1+w/h)^{1/9}-1}+O(Ub/r).
	\end{gathered}
	\]
	The constant and linear terms cancel in
	\[
	q'(1+\delta)^{9/2}-q-k\left(1+\frac{q'}k\delta\right)^{9/2}
	=O\big((q'+q'^2/k)b^2\big)=O(Ub^2/r),
	\]
	where $|q'\delta/k|\lesssim b/r=o(1)$.
	As $|A_4|/q\eqsim\rho/U^5$, the fourth restriction yields
	\begin{equation}\label{c:root-error}
		\left|\frac{5\sqrt T}{128k^{7/2}}
		\big((h+w)^{1/9}-h^{1/9}\big)^{9/2}-n\right|
		\lesssim J^{-4}+\frac{\rho b^2}{U^4r}
		\lesssim\frac{\rho b^2}{U^4r}.
	\end{equation}
	The last inequality uses $J^{-4}=(r/a)\rho b^2/(U^4r)$.
	For fixed $k,n\in\N$, define $\mathcal A_{k,n},D_r>0$ and
	$f_{k,n}:(0,\infty)\to\R$ by
	\begin{equation}\label{c:inverse}
		\mathcal A_{k,n}=\frac{16384n^2k^7}{25T},\qquad
		f_{k,n}(h)=(h^{1/9}+\mathcal A_{k,n}^{1/9})^9-h,\qquad
		D_r=\frac{Hb^2}{r}.
	\end{equation}
	Since $n\eqsim Z$, the relative error in \eqref{c:root-error} is
	$O((b/r)^2)=o(1)$. Taking the $2/9$ power and then the ninth power gives
	\[
	|(h+w)^{1/9}-h^{1/9}-\mathcal A_{k,n}^{1/9}|
	\lesssim H^{1/9}b^2/r,\qquad |w-f_{k,n}(h)|\lesssim D_r.
	\]
	Moreover, $\rho^2=THU$ gives
	\[
	\mathcal A_{k,n}\eqsim Z^2K^7/T
	=\rho^2r^9/(TU)=Hr^9,
	\]
	and
	$f_{k,n}(h)\eqsim Hr$ for $h\approx H$, uniformly on fixed rectangular
	enlargements of the $n,k$ ranges.
	
	For this fixed $r$, choose intervals $I_1\subseteq[cZ,CZ]$ and
	$I_2\subseteq[cK,CK]$ containing the ranges of $n$ and $k$, respectively.
	The intervals do not depend on the summation indices. Fix $C_1\ge1$
	so that the bound $|w-f_{k,n}(h)|\le C_1D_r$ holds for every counted pair.
	For each $n,k,h,w$, the displayed formula for $q$ leaves
	$O(Ub/r)$ integers $q$, since $Ub/r\gtrsim J$.
	Let $B_r$ count the pairs with $\ell\ne0$ and
	$rU\le q'-q<2rU$. Then
	\begin{equation}\label{c:common-factor}
		\frac{B_r}{HU}\lesssim\frac b{Hr}
		\sum_{n\in I_1}\sum_{k\in I_2}\sum_{h\approx H}
		\#\{w\in\Z:|w-f_{k,n}(h)|\le C_1D_r\},\qquad
		B\lesssim B^0+\sum_r B_r.
	\end{equation}
	The sum in $r$ has $O(\log T)$ terms, since $Ur\gtrsim1$ and $r\lesssim a$.
	
	\subsection{Fourier expansion and the classical spacing bounds}
	Choose a fixed $c>0$ with $cC_1\le1/8$, and set
	\begin{equation}\label{c:fourier-cutoff}
		Q=\left\lfloor c\frac{ZKb}{r}\right\rfloor
		=\left\lfloor c\frac{\rho br}{U^3}\right\rfloor.
	\end{equation}
	If $Q<2$, \eqref{c:common-factor} and the trivial bound $O(1+D_r)$
	for its innermost count give
	\[
	\frac{B_r}{HU}\lesssim\frac{\rho br}{U^3}
	+\frac{H\rho b^3}{U^3}\lesssim1.
	\]
	Suppose $Q\ge2$. Then $Q\eqsim ZKb/r$ and
	\begin{equation}\label{c:cutoff-checks}
		QD_r\le c\frac{H\rho b^3}{U^3}\le c,\qquad
		\frac Q{Hr}\lesssim\frac{\rho b}{HU^3}\le T^{-3/67}.
	\end{equation}
	Hence $QC_1D_r\le1/8$.
	For real $u$ with $\|u\|\le C_1D_r$, the Fej\'er kernel gives
	\[
	\mathbf1_{\{\|u\|\le C_1D_r\}}
	\le\frac2{Q^2}\left|\sum_{j=0}^{Q-1}e(ju)\right|^2.
	\]
	For integers $1\le v<Q$, put
	\[
	\Sigma_v=\sum_{n\in I_1}\sum_{k\in I_2}\sum_{h\approx H}e(vf_{k,n}(h)).
	\]
	Each triple $n,k,h$ admits at most one $w$, since $2C_1D_r<1$.
	Expand the square before taking absolute values in $n,k$. Using
	$Q\eqsim ZKb/r$ in \eqref{c:common-factor} gives
	\begin{equation}\label{c:fejer-count}
		\frac{B_r}{HU}\lesssim1+\sum_{1\le v<Q}\frac{|\Sigma_v|}{ZKH}.
	\end{equation}
	
	Fix $V=2^j<Q$ with an integer $j\ge0$, and put $Y=HrV$.
	Throughout the following transformation, $v$ is an integer satisfying
	$V\le v<\min(2V,Q)$, $n\in I_1\cap\N$ and $k\in I_2\cap\N$.
	Since $Q\ge2$, \eqref{c:cutoff-checks} gives $Y\ge Hr\gtrsim T^{3/67}$.
	For $f=f_{k,n}$ and $H\le h\le2H$, \eqref{c:inverse} gives
	\[
	f(h)=\sum_{\nu=1}^9\binom9\nu
	\mathcal A_{k,n}^{\nu/9}h^{1-\nu/9},\qquad
	\mathcal A_{k,n}\eqsim Hr^9.
	\]
	Differentiating this identity, and using $r=o(1)$, yields
	\begin{equation}\label{c:inverse-derivatives}
		\begin{gathered}
			f'(h)=\big(1+(\mathcal A_{k,n}/h)^{1/9}\big)^8-1\eqsim r,\\
			f''(h)=-\frac{8\mathcal A_{k,n}^{1/9}}{9h^{10/9}}
			\big(1+(\mathcal A_{k,n}/h)^{1/9}\big)^7,\qquad |f''(h)|\eqsim r/H,\\
			|f^{(j)}(h)|\lesssim H^{1-j}\sum_{\nu=1}^9r^\nu
			\lesssim rH^{1-j}\quad(j=3,4).
		\end{gathered}
	\end{equation}
	For real $v,t>0$, define
	\begin{equation}\label{c:dz}
		D(v,t)=\big(v^{-1/8}-(v+t)^{-1/8}\big)^{-8}.
	\end{equation}
	For fixed $k,n$, the first formula below defines $h_*>0$ for every
	$v,t>0$. It satisfies $vf'(h_*)=t$, and direct substitution gives
	\begin{equation}\label{c:amplitude}
		\begin{gathered}
			h_*=\mathcal A_{k,n}\big((1+t/v)^{1/8}-1\big)^{-9}
			=-\mathcal A_{k,n}\partial_tD(v,t),\\
			vf(h_*)-th_*=\mathcal A_{k,n} D(v,t),\qquad
			(v|f''(h_*)|)^{-1/2}
			=\sqrt{\mathcal A_{k,n}}\sqrt{\partial_t^2D(v,t)}.
		\end{gathered}
	\end{equation}
	Since $v\eqsim V$ and $Y=HrV$,
	\[
	|(vf)''(h)|\eqsim Y/H^2,\qquad
	|(vf)^{(j)}(h)|\lesssim Y/H^j\quad(j=3,4).
	\]
	The van der Corput transformation \cite[\Extref{Huxley96}{Lemma~5.5.3}]{Huxley96}
	in $h$ therefore has error $O(H/\sqrt Y+\log(2+rV))$.
	The cited lemma is applied on each actual $h$-subinterval. Since
	$|(vf)''|\eqsim Y/H^2$ and the derivative range has length $O(rV)$,
	this is the uniform inversion error. A change in the half-weight of an
	endpoint contributes $O(H/\sqrt Y)$ and is already included.
	Its integer frequencies
	are restricted by $H\le h_*<2H$, and on this range
	\[
	t\eqsim rV,\qquad (v|f''(h_*)|)^{-1/2}\eqsim H/\sqrt Y.
	\]
	The phase and amplitude in \eqref{c:amplitude} each factor into a
	function of $(n,k)$ and a function of $(v,t)$.
	Choose fixed $0<c_2<C_2$ so that each stationary frequency satisfies
	$c_2rV\le t\le C_2rV$, and put
	\[
	\begin{gathered}
		\mathcal I=\{(v,t)\in\N^2:V\le v<\min(2V,Q),\ c_2rV\le t\le C_2rV\},\\
		x_{n,k}=(n/Z)^2(k/K)^7,\qquad y_{v,t}=\frac{r^8}{V}D(v,t).
	\end{gathered}
	\]
	The normalized coordinates lie in fixed positive intervals, and
	$\mathcal A_{k,n} D(v,t)=(16384/25)Yx_{n,k}y_{v,t}$.
	If $\mathcal I=\varnothing$, the transformed main sum is zero.
	Otherwise the restriction $H\le h_*<2H$ is
	$H/\mathcal A_{k,n}\le-\partial_tD(v,t)<2H/\mathcal A_{k,n}$,
	and \Lemref{lem:finite-separation} separates it at cost $O(\log T)$.
	For a sufficiently large fixed $C\ge1$, define the following counts,
	with all indices integer:
	\[
	\begin{aligned}
		B_x&=\#\{(n,k,n',k'):n,n'\in I_1,\ k,k'\in I_2,
		\ |x_{n,k}-x_{n',k'}|\le C/Y\},\\
		B_y&=\#\{((v,t),(v',t'))\in\mathcal I^2:
		|y_{v,t}-y_{v',t'}|\le C/Y\}.
	\end{aligned}
	\]
	We claim
	\begin{equation}\label{c:pair-counts}
		B_x\lesssim_\varepsilon T^\varepsilon
		\left(ZK+\frac{(ZK)^2}{Y}\right),\qquad
		B_y\lesssim_\varepsilon T^\varepsilon V^2.
	\end{equation}
	For the first bound, apply \cite[\Extref{RS06}{Section~6.1}, (6.4)]{RS06}
	with monomial exponents $2,7$ and tolerance $O(Y^{-1})$.
	A bounded dyadic subdivision and \eqref{eq:pair-comparison}
	cover the fixed intervals $I_1,I_2$.
	For the second, use
	\[
	v^{-1/8}-(v+t)^{-1/8}=rV^{-1/8}y_{v,t}^{-1/8}.
	\]
	A pair counted by $B_y$ therefore satisfies
	\[
	|v^{-1/8}-(v+t)^{-1/8}-v'^{-1/8}+(v'+t')^{-1/8}|
	\lesssim(r/Y)V^{-1/8}.
	\]
	For sufficiently large $T$, $t\le C_2rV\le V/8$, and hence
	\[
	(v,v+t)\in
	\begin{cases}
		[3V/4,3V/2]^2,&v\le11V/8,\\
		[5V/4,5V/2]^2,&v>11V/8.
	\end{cases}
	\]
	Apply \eqref{eq:pair-comparison} to the values
	$v^{-1/8}-(v+t)^{-1/8}$ and these two groups.
	Within each group the injective map
	\[
	(v,t,v',t')\longmapsto(v,v'+t',v+t,v')
	\]
	has all four entries in one interval $[A,2A]$, with $A\eqsim V$.
	Robert--Sargos \cite[\Extref{RS06}{Theorem~2}, \Extref{RS06}{Section~5}]{RS06}, with exponent $-1/8$
	and tolerance $O(r/Y)$, therefore gives
	\[
	B_y\lesssim_\varepsilon T^\varepsilon(V^2+rV^4/Y)
	=T^\varepsilon V^2(1+V/H)\lesssim T^\varepsilon V^2,
	\]
	since $V/H\le Q/H\lesssim rT^{-3/67}\lesssim1$ by
	\eqref{c:cutoff-checks} and $r\lesssim a=o(1)$.
	
	For integers $V\le v<\min(2V,Q)$, insert complex coefficients
	$c_v$ with $|c_v|\le1$ before the transformation.
	They affect only the $(v,t)$ factor. The double large sieve
	\cite[\Extref{RS06}{Lemma~8}, \Extref{RS06}{Section~6.1}]{RS06}, \eqref{c:amplitude}, and
	\eqref{c:pair-counts} give
	\begin{equation}\label{c:direct-sieve}
		\begin{split}
			\frac1V\sum_{\substack{v\approx V\\v<Q}}\frac{|\Sigma_v|}{ZKH}
			&\lesssim_\varepsilon
			\frac{T^\varepsilon}{VZK}\sqrt{B_xB_y}
			+Y^{-1/2}+H^{-1}\log(2+rV)\\
			&\lesssim_\varepsilon T^\varepsilon
			\left((ZK)^{-1/2}+(HrV)^{-1/2}\right).
		\end{split}
	\end{equation}
	For the absolute values choose $c_v=\overline{\Sigma_v}/|\Sigma_v|$
	when $\Sigma_v\ne0$, and zero otherwise. The transformation error for
	each $n,k,v$ is $O(H/\sqrt Y+\log(2+rV))$. Endpoint changes are included.
	The last inequality uses $rV\le H$ from \eqref{c:cutoff-checks}.
	All parameters are bounded by fixed powers of $T$.
	Finally, sum \eqref{c:direct-sieve} over dyadic $V<Q$ in
	\eqref{c:fejer-count}, using
	$\sum_{j\ge0,\,2^j<Q}2^j\lesssim Q$ and
	$\sum_{j\ge0,\,2^j<Q}2^{j/2}\lesssim\sqrt Q$:
	\begin{equation}\label{c:factorized-count}
		\begin{split}
			\frac{B_r}{HU}
			&\lesssim_\varepsilon T^\varepsilon
			\left(1+\frac Q{\sqrt{ZK}}+\sqrt{\frac Q{Hr}}\right)\\
			&\lesssim_\varepsilon T^\varepsilon
			\left(1+\sqrt{\frac{\rho b^2}{U^3}}
			+\sqrt{\frac{\rho b}{HU^3}}\right)
			\lesssim_\varepsilon T^\varepsilon.
		\end{split}
	\end{equation}
	Together with \eqref{c:zero-count} and the $O(\log T)$ ranges in
	\eqref{c:common-factor}, this proves \Propref{c:count-prop}.
	
	\section{The larger range: the second-spacing estimate}\label{sec:secondspacing}
	We prove \Propref{prop:second} from Watt's estimate
	\cite[\Extref{Watt04}{Lemma~8.2}]{Watt04} and \Thmref{thm:first}. The role of
	Watt's result is only to count nearby minor arcs after the parameter
	hypotheses are verified below. The first-spacing input is supplied separately
	by \Thmref{thm:first}, so the two ingredients remain logically distinct.
	Throughout this section,
	\[
	H=T^{m-\tau},\qquad M=T^m,\qquad
	\beta\le\tau\le24/67,\qquad \tau+s\le m\le1/2.
	\]
	
	\subsection{The required estimate and the choice of parameters}
	Put
	\begin{equation}\label{eq:interval-lengths}
		\begin{gathered}
			N_{\mathrm I}=2^{18/35}M^{57/35}H^{-22/35}T^{-17/35},\\
			N_{\mathrm {II}}=(M^6/(TH^5))^{1/7},\qquad
			N_{\mathrm {III}}=2(M^6/(T^2H))^{1/3},\\
			N=\begin{cases}
				\lceil N_{\mathrm I}\rceil,&HM^9\ge T^4,\\
				\lceil\min(N_{\mathrm {II}},N_{\mathrm {III}})\rceil,&HM^9<T^4.
			\end{cases}
		\end{gathered}
	\end{equation}
	These are \cite[(8.18)--(8.20)]{Watt04} with parameters
	$(2M,2H,N,2T)$.
	
	Let $j_0=\lceil2T/M^2\rceil$. For $x>-M\alpha$ and
	$z>-\alpha/2$, respectively, define
	\[
	\begin{gathered}
		f(x)=\frac{T}{4(x+M\alpha)}+\frac{T\kappa}{M}+j_0x,\\
		\Phi(z)=\frac1{16}\log(z+\alpha/2)+\frac\kappa2z
		+\frac{j_0M^2}{2T}z^2.
	\end{gathered}
	\]
	For integer $h,n$,
	\[
	e(hj_0n)=1,\qquad
	f(x)=\frac{2(2T)}{2M}\Phi'(x/(2M)),\qquad
	[M,2M]=[(2M)/2,2M].
	\]
	For $1/3\le z\le3$, set $u=z+\alpha/2$. Then
	\begin{equation}\label{eq:primitive-derivatives}
		\begin{gathered}
			5/24\le u\le25/8,\qquad 2\le j_0M^2/T\le3,\\
			\Phi''(z)=\frac{j_0M^2}{T}-\frac1{16u^2}\ge\frac{14}{25},\qquad
			\Phi^{(j)}(z)=\frac{(-1)^{j-1}(j-1)!}{16u^j}\quad(j=3,4,5).
		\end{gathered}
	\end{equation}
	Take Watt's auxiliary function $F$ to be $\Phi$. The preceding identity
	then says that Watt's function $f(x,0)$ is exactly the $f(x)$ used here,
	so his defining relation (2.1) is satisfied by construction. By
	\eqref{eq:primitive-derivatives}, choose fixed positive constants
	$C_2,\ldots,C_5$ satisfying \cite[(2.2)--(2.5)]{Watt04} for this
	family, and define $C_R=4C_3$. In Watt's notation the auxiliary modulus
	is one. Moreover, $M\le T^{1/2}$ gives the required lower bound
	$P/Q\eqsim(2T)/(2M)^2\ge1/2$ after the parameter substitution below.
	We also choose $c_*$ no larger than the fixed constant in Watt's
	condition~(5.2).
	Now set
	\begin{equation}\label{eq:NR}
		R=\left\lceil\sqrt{C_RM^3/(2NT)}\right\rceil,\qquad W=H/R.
	\end{equation}
	\Secref{sec:farey-inputs} verifies that
	$R\eqsim(M^3/(NT))^{1/2}$ and $N,R,W\to\infty$.
	
	\begin{lemma}\label{lem:farey-Q}
		With these choices,
		\begin{equation}\label{eq:second}
			\frac{|S|}{H}\lesssim_\varepsilon T^\varepsilon
			\left[\frac{M}{\sqrt{HN}}W^{2/9}
			+\frac{M}{N^{3/4}H^{1/4}}W^{1/3}\right].
		\end{equation}
	\end{lemma}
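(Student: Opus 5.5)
This is the Bombieri--Iwaniec--Huxley argument with $\Phi$ as the primitive. The first-spacing input is \Thmref{thm:first}, and the second-spacing input is \cite[Lemma~8.2]{Watt04}. I will follow Huxley's and Watt's structure.

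\emph{Step 1: Farey dissection.} Replace $f$ by $f+j_0x$; since $e(hj_0n)=1$, this does not change $S$. Cut $[M,2M]$ into $O(M/N)$ short intervals of length $N$. On each interval take the major/minor arc decomposition attached to Farey fractions $a/q$ with $q\le R$, where $R$ is as in \eqref{eq:NR}; this is \cite[Section~5]{Watt04} with parameters $(2M,2H,N,2T)$. The endpoints of the pieces are arbitrary, so Abel summation via \eqref{eq:abel-bv} moves the weights $g$ and $G$ into the coefficients.

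\emph{Step 2: transform each minor-arc piece.} On each arc, approximate $f$ by a quadratic polynomial with rational leading coefficient $a/q$. Poisson summation in $n$ modulo $q$ is the first van der Corput transformation; the sum over the dual variable and $h$ is the second. The result is a bilinear form $\sum\sum a_{(h,\ell)}b_{\mathbf x}\,e(\mathbf x\cdot\mathbf y)$. Here the $\mathbf y$ are the cone-type vectors $(h,\,h\ell,\,h\ell^2,\,h^{3/2}\ldots)$ built from $h\approx H$ and $\ell$, and the $\mathbf x$ are the resonance vectors determined by the arc $(a/q,$ its neighbours$)$. The derivative bounds \eqref{eq:primitive-derivatives} check Watt's hypotheses (2.2)--(2.5) and condition (5.2). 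The error terms from the transformations and from the major arcs should be $O(M/\sqrt{HN}+\cdots)$, which is dominated by \eqref{eq:second}, because $N,R,W\to\infty$.

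\emph{Step 3: distributional double large sieve.} Apply the double large sieve of \Appref{sec:sieve} in the form that keeps level sets. The $\mathbf x$-side spacing, i.e.\ the number of nearby arcs, is bounded by Watt's Lemma~8.2 at the chosen $N$ and $R$; this costs roughly $O(T^\varepsilon\cdot\text{diagonal})$ with the choices \eqref{eq:interval-lengths}. The $\mathbf y$-side is the first-spacing problem. Instead of a single moment, use the distribution estimate \eqref{eq:mixed} from \Thmref{thm:first}, which bounds $|\{|\sum|>\lambda\}|$ by a fourth-power term plus a sixth-power term. Integrating over dyadic $\lambda$ separately for each term gives two contributions.

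\emph{Step 4: sum over denominators and optimize.} Dyadically summing over $q\approx Q\le R$ should give denominator powers that are summable. After rescaling by $W=H/R$, the fourth-power term should produce $W^{2/9}M/\sqrt{HN}$, while the sixth-power term should give $W^{1/3}M/(N^{3/4}H^{1/4})$. Finally, sum over the $O(M/N)$ intervals.

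\emph{Main obstacle.} The hardest step is Step 3: making the distributional sieve compatible with both inputs while keeping the two power terms separate. Concretely, one has to check that Watt's counting of $\mathbf x$ is uniform on the level sets cut out by $\lambda$, and that the exponents in $Q$ come out negative so the $q$-sum converges. If that fails, I would first split the $\lambda$ range at the crossover point between the fourth- and sixth-power terms, and apply Hölder separately on each side.
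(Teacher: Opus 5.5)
Your outline has the same architecture as the paper: Watt's dissection, two van der Corput transformations, the distributional double large sieve fed by \Thmref{thm:first} and Watt's Lemma~8.2, and a dyadic sum over denominators. However, the proof of this lemma consists of the bookkeeping you leave as \emph{should give}, and the specifics you do commit to are wrong.

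First, the minor-arc denominators satisfy $R\le d_I\lesssim H$ by \eqref{eq:Farey}, not $q\le R$. One has $\#\mathcal I_Q\lesssim MR^2/(NQ^2)$ for $Q=2^jR$. Watt's bound \eqref{eq:classical-spacing} is only available for $R\le Q\le c_*H$, so the denominators $d\ge c_*H$ and the long major arcs with $d_0\lesssim R^2/H$ must be put into the $M\sqrt{H/N}$ error separately. The resulting bounds carry the factors $(R/Q)^{8/9}$, $(R/Q)^{1/3}$, $(R/Q)^{1/12}$, which are summable only upward from $Q=R$.

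Second, take $P\eqsim H^4N^2Q^2/R^8$, $n=\#\mathcal I_Q$, and Watt's $B_Q$ inserted as it stands (it is not of diagonal size). The $q$-th term of \eqref{eq:distributional-sieve}, normalized as in \eqref{eq:separated}, is then $\frac{MR}{N}(R/Q)^{3-20/(3q)}W^{4/(3q)}\mathcal E_q$. Hence the \emph{sixth}-power term, with $\mathcal E_6=(KL)^{1/2}$, gives $\frac{M}{\sqrt{HN}}W^{2/9}$. The \emph{fourth}-power term gives $\frac{M}{N^{3/4}H^{1/4}}W^{1/3}$, plus a third contribution $\frac{MR^{1/4}}{N^{1/2}H^{3/4}}W^{1/3}(R/Q)^{1/12}$ from the $Z$ part of $\mathcal E_4=[KL^2(L+Z)]^{1/4}$. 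That third term is absorbed only through the identity $\frac{MR^{1/4}}{N^{1/2}H^{3/4}}W^{1/3}=W^{-5/36}\frac{M}{\sqrt{HN}}W^{2/9}$. You have the two attributions swapped and you miss the third term. There is also no final sum over $O(M/N)$ intervals: the intervals are the sieve points themselves.

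The genuinely missing step is fitting the sieve to both inputs at once. After the second transformation the original $h$ is gone. The frequencies are $y_{kl}=(kl,l,l\sqrt k,l/\sqrt k)$ in the two dual variables, with $k\approx K\eqsim NQ/R^2$ and $l\approx L\eqsim HQ/R^2$. The points are $x_I=(\bar a_I/d_I,\{\bar a_Ib_I/d_I\},t_I,-\kappa_It_I/2)$ with $t_I=(\gamma_Id_I^3)^{-1/2}$. Reaching this form needs the linearization of $\sqrt{k-\kappa_I}$ and the separation of the $I$-dependent interval restrictions by \Lemref{lem:finite-separation}.

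You must then choose $D=(NR^2/Q^3)^{1/2}$, $(D_i)=C(1,1,D,D)$ and $(E_i)=C(KL,L,L\sqrt K,L/\sqrt K)$, and check two things.
\begin{itemize}
\item The sieve pair count is $O(B_Q)$. This requires converting the tolerances on $t_I$ and $\kappa_It_I$ into Watt's conditions $|\gamma_{I'}d_{I'}^3/(\gamma_Id_I^3)-1|\lesssim R^2/(HN)$ and $|\kappa_I-\kappa_{I'}|\lesssim Q/H$.
\item The first three coordinates match the box $\mathcal B$ of \Thmref{thm:first} with $\eta=C_\eta R^2/(NH)$, i.e.\ $D\eqsim(\eta L\sqrt K)^{-1}$ and $1\le L<K\le\eta^{-1}\le KL$. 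This is where $C_\eta$ is taken large and $c_*$ small. The non-cone coordinate $l/\sqrt k$ is frozen into the coefficients at each fixed $x_4$ and then integrated out.
\end{itemize}

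Finally, the obstacle you flag does not arise. In \Lemref{lem:elementary-sieve} the $x$-spacing enters only through $\sum_{\mathcal Q}n_{\mathcal Q}^2\le B$ over boxes with sides at most $E_i^{-1}$, independently of $\lambda$. One splits $|F_{b,\mu}|$ at a single level $\lambda$: below it the contribution is at most $n\lambda$, and above it one uses Cauchy--Schwarz and the distribution bound. Optimizing $\lambda$ then keeps the two power terms separate automatically.
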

	\begin{proof}[Proof of \Propref{prop:second}]
		Substituting \eqref{eq:interval-lengths} and \eqref{eq:NR} gives
		\begin{equation}\label{eq:larger-leading}
			\begin{aligned}
				\frac{M}{\sqrt{HN}}W^{2/9}
				&\eqsim T^{1/9}M^{2/3}H^{-5/18}N^{-7/18}\\
				&\lesssim
				\begin{cases}
					T^{(9+\tau)/30},&HM^9\ge T^4,\\
					T^{(2m+1)/6}+T^{(10-7m+4\tau)/27},&HM^9<T^4.
				\end{cases}
			\end{aligned}
		\end{equation}
		The three exponents satisfy
		\[
		\begin{aligned}
			\frac{9+\tau}{30}&\le\frac{9+24/67}{30}=\beta,\\
			10m-\tau<4&\ \Longrightarrow\
			\frac{2m+1}{6}<\frac{9+\tau}{30},\\
			\frac{10-7m+4\tau}{27}
			&\le\frac{10-3\tau-7s}{27}
			\le\frac{10-3\beta-7s}{27}=\beta-\frac1{36180}.
		\end{aligned}
		\]
		Moreover,
		\[
		\frac{M}{N^{3/4}H^{1/4}}W^{1/3}
		=\left(\frac HM\right)^{1/2}
		\left(\frac{M}{\sqrt{HN}}W^{2/9}\right)^{3/2}
		\lesssim T^{(3\beta-\tau)/2}\le T^\beta.
		\]
		Equation~\eqref{eq:second} proves the proposition.
	\end{proof}
	
	\subsection{Verification of the second-spacing hypotheses}\label{sec:farey-inputs}
	The definition \eqref{eq:NR} gives
	\[
	2(R-1)^2N(2T)<C_3(2M)^3\le2R^2N(2T),
	\]
	the condition \cite[(2.6)]{Watt04}. Also $(2T)/(2M)^2\ge1/2$.
	
	The powers of $T$ in the three candidate lengths are
	\[
	N_{\mathrm I}\eqsim T^{m+(22\tau-17)/35},\qquad
	N_{\mathrm {II}}=T^{(m+5\tau-1)/7},\qquad
	N_{\mathrm {III}}=2T^{(5m+\tau-2)/3}.
	\]
	Each exponent is at least $443/4020>1/10$, so taking ceilings changes
	only fixed factors. In the first case, $m\ge(4+\tau)/10$. In the
	second, $m<(4+\tau)/10$ and $N\lesssim N_{\mathrm {II}},N_{\mathrm {III}}$.
	Consequently
	\[
	\frac{HNT}{M^3}
	\lesssim T^{(8-33\tau)/70}+T^{(1-3\tau-s)/3}
	\le 2T^{-37/4020}.
	\]
	Thus $M^3/(NT)\gtrsim HT^{37/4020}\to\infty$ and
	$R\eqsim(M^3/(NT))^{1/2}$. The remaining comparisons are
	\begin{equation}\label{eq:arc-hyp}
		\begin{aligned}
			\frac HN
			&\lesssim T^{(17-57\tau)/35}+T^{(2-6\tau-2s)/3}
			\lesssim T^{-37/2010},\\
			\frac RH
			&\lesssim T^{3(8\tau-3)/35}+T^{(3\tau-2s-1)/6}
			\lesssim T^{-27/2345},\\
			\frac NM
			&\lesssim T^{(22\tau-17)/35}+T^{(-\tau-6s-1)/7}
			\lesssim T^{-611/2345},\\
			\frac H{R^2}&\lesssim T^{-37/4020},\\
			\frac{HN^3T}{M^4}
			&\lesssim T^{(31\tau-16)/35}+T^{(\tau-1)/5}
			\lesssim T^{-43/335}.
		\end{aligned}
	\end{equation}
	For the last hypothesis of Watt's lemma, use
	\[
	\begin{gathered}
		\min(N_{\mathrm {II}},N_{\mathrm {III}})
		\le N_{\mathrm {II}}^{7/16}N_{\mathrm {III}}^{9/16}
		\eqsim T^{m+\tau/2-7/16},\\
		\frac{H^2N^3}{MR^4}\eqsim\frac{H^2N^5T^2}{M^7}
		\lesssim T^{(8\tau-3)/7}+T^{(8\tau-3)/16}
		\lesssim T^{-9/1072}.
	\end{gathered}
	\]
	Under the substitution $(M_*,H_*,T_*)=(2M,2H,2T)$, these
	comparisons verify Watt's conditions (2.4) and (2.7)--(2.9) for all
	sufficiently large $T$. Indeed, $m\ge\tau+s>0$ gives $M_*\ge4$,
	$H/R^2\to0$ and $R/H\to0$ give
	$4C_3\sqrt{H_*}\le R\le\varepsilon_0H_*$, while
	$H/N\to0$, $N/M\to0$, and $HN^3T/M^4\to0$ give (2.8)--(2.9)
	after increasing the fixed lower threshold for $T$. The last displayed
	estimate similarly gives (8.14), including its fixed constant. Thus,
	together with the exact verification of (2.6) above, all hypotheses of
	\cite[\Extref{Watt04}{Lemmas~8.1--8.2}]{Watt04} used below are satisfied. We also have
	\[
	N,R,W\to\infty,\qquad
	\frac{HN^2}{MR^2}\eqsim\frac{HN^3T}{M^4}=o(1).
	\]
	
	Extend the original $n$ interval by $2N$ to the left, and denote
	its consecutive length-$N$ intervals by $I_0,I_1,\ldots$. Apply
	\cite[\Extref{Watt04}{Section~2}, (2.10)--(2.23)]{Watt04} with modulus one to the
	derivative intervals $f'(I_i)$. Group the $I_i$ whose images form
	a long major arc, as in \cite[(2.13)]{Watt04}. For each remaining
	$I=I_i$ meeting the original summation range, one has $i\ge2$.
	Let $a_I\in\Z$ and $d_I\in\N$ be coprime, with $d_I\ge R$,
	so that $a_I/d_I$ is the selected fraction in $f'(I_{i-2})$. Let $m_I$ be an integer
	nearest the unique solution of $f'(x)=a_I/d_I$ in $I_{i-2}$.
	For $Q=2^jR$ with an integer $j\ge0$, let $\mathcal I_Q$ be the
	collection of these remaining intervals $I$ satisfying $Q\le d_I<2Q$. By \cite[(2.16), (2.23)]{Watt04} and
	\cite[(7.9)]{BW18},
	\begin{equation}\label{eq:Farey}
		n\in I\ \Longrightarrow\ N\le n-m_I\le3N,\qquad
		R\le d_I\lesssim H,\qquad
		\#\mathcal I_Q\lesssim\frac{MR^2}{NQ^2}.
	\end{equation}
	For each $I$, define
	\begin{equation}\label{eq:arc-data}
		\gamma_I=\tfrac12f''(m_I)\eqsim(NR^2)^{-1},\qquad
		b_I=\lfloor d_If(m_I)\rfloor,\qquad \kappa_I=d_If(m_I)-b_I.
	\end{equation}
	For each $I$, let $\bar a_I$ be the modular inverse specified by
	$0\le\bar a_I<d_I$ and $a_I\bar a_I\equiv1\pmod{d_I}$.
	Let $B_Q$ count the ordered pairs $(I,I')\in\mathcal I_Q^2$ satisfying
	\begin{equation}\label{eq:pairs}
		\begin{aligned}
			\left\|\frac{\bar a_I}{d_I}-\frac{\bar a_{I'}}{d_{I'}}\right\|
			&\le\frac{R^4}{HNQ^2},&
			\left|\frac{\gamma_{I'}d_{I'}^3}{\gamma_Id_I^3}-1\right|
			&\le\frac{R^2}{HN},\\
			\left\|\frac{\bar a_Ib_I}{d_I}-\frac{\bar a_{I'}b_{I'}}{d_{I'}}\right\|
			&\le\frac{R^2}{HQ},&
			|\kappa_I-\kappa_{I'}|&\le\frac QH.
		\end{aligned}
	\end{equation}
	For a sufficiently small
	fixed $c_*>0$, Watt's estimate \cite[\Extref{Watt04}{Lemma~8.2}, (8.17)--(8.20)]{Watt04}
	is
	\begin{equation}\label{eq:classical-spacing}
		B_Q\lesssim_\varepsilon T^\varepsilon
		\frac{M^2R^4}{H^2N^4}\left(\frac QH\right)^{2/3},
		\qquad R\le Q\le c_*H.
	\end{equation}
	Indeed, in the first case its additional factor is
	$1+(T^4/(64HM^9))^{1/5}=O(1)$. In the second, (8.20) applies.
	The ceilings give $N/2<N_{\mathrm I}\le N$ or
	$N/2<\min(N_{\mathrm {II}},N_{\mathrm {III}})\le N$ for large $T$.
	The conditions \eqref{eq:pairs} are \cite[(8.2), (8.5)--(8.7)]{Watt04}.
	Fixed changes in their tolerances follow from
	\eqref{eq:pair-comparison}, using $\log(\gamma_Id_I^3)$ in the second
	coordinate. The explanation on pp.~380--381 of \cite{Watt04} shows
	that this count uses only the function denoted there by $f(x,0)$.
	With parameters $(2M,2H,N,2T)$ and auxiliary function $\Phi$, that function is
	exactly the $f$ defined above.
	We invoke this arc-pair count, not a bound for Watt's original
	difference-phase exponential sum.
	
	\subsection{The double large sieve and the sum over denominators}
	The following definitions are used throughout the rest of this section.
	Fix a sufficiently large comparison constant $C_6\ge2$.
	Then choose a sufficiently large fixed $C_\eta>0$ and, if necessary,
	decrease the constant $c_*$ in \eqref{eq:classical-spacing}.
	For each denominator scale $Q=2^jR$, $j\ge0$, use all dyadic pairs
	$(K,L)$ satisfying the first line below, and define the other quantities by
	\begin{equation}\label{eq:spacing-scales}
		\begin{gathered}
			C_6^{-1}NQ/R^2\le K\le C_6NQ/R^2,\qquad
			C_6^{-1}HQ/R^2\le L\le C_6HQ/R^2,\\
			\eta=C_\eta R^2/(NH),\qquad Z=\eta K^2,\\
			\mathcal E_4=[KL^2(L+Z)]^{1/4},\qquad \mathcal E_6=(KL)^{1/2}.
		\end{gathered}
	\end{equation}
	There are $O(1)$ such rectangles $[K,2K)\times[L,2L)$.
	\Secref{sec:class-transform} shows that they cover all the
	transformed integer frequencies.
	For $I\in\mathcal I_Q$ and integers $k\approx K$, $l\approx L$, define
	\begin{equation}\label{eq:vectors}
		\begin{gathered}
			t_I=(\gamma_Id_I^3)^{-1/2},\qquad
			x_I=(\bar a_I/d_I,\{\bar a_Ib_I/d_I\},t_I,-\kappa_It_I/2),\\
			y_{kl}=(kl,l,l\sqrt k,l/\sqrt k).
		\end{gathered}
	\end{equation}
	The first two coordinates of $x_I$ lie in $[0,1)$. Taking the
	fractional part does not change $e(x_I\cdot y_{kl})$, since $kl,l\in\Z$.
	For each rectangle, \Secref{sec:class-transform} supplies finite
	families of coefficients $a_{I,j}$ and functions $b_j(k,l)$. They may
	depend on $Q,K,L$ and satisfy the coefficient bound below. If $\mathcal I_Q=\varnothing$, set $S_Q=0$. Otherwise define
	\begin{equation}\label{eq:separated}
		\begin{gathered}
			S_Q=\max_{(K,L)}\frac{R^2}{Q}\sum_j
			\left|\sum_{I\in\mathcal I_Q}\sum_{k\approx K,\,l\approx L}
			a_{I,j}b_j(k,l)e(x_I\cdot y_{kl})\right|,\\
			\sum_j\sup_I|a_{I,j}|\sup_{k,l}|b_j(k,l)|\lesssim\log^2T.
		\end{gathered}
	\end{equation}
	The maximum is over the dyadic pairs in \eqref{eq:spacing-scales}.
	Their number is bounded,
	so \Secref{sec:class-transform} gives
	\begin{equation}\label{eq:class-reduction}
		|S|\lesssim_\varepsilon T^\varepsilon M\sqrt{H/N}
		+\sum_{\substack{Q=2^jR\\j\ge0,\ R\le Q\le c_*H}}S_Q.
	\end{equation}
	\begin{proof}[Proof of \Lemref{lem:farey-Q}]
		Fix $Q=2^jR$ with $j\ge0$ and $R\le Q\le c_*H$.
		The case $S_Q=0$ is immediate. Otherwise fix a rectangle attaining
		the maximum. For a sufficiently large fixed
		constant $C$, choose the box widths in \eqref{eq:distributional-sieve} as
		\[
		\begin{gathered}
			D=(NR^2/Q^3)^{1/2},\qquad
			(D_1,D_2,D_3,D_4)=C(1,1,D,D),\\
			(E_1,E_2,E_3,E_4)=C(KL,L,L\sqrt K,L/\sqrt K).
		\end{gathered}
		\]
		Write $x=(x_1,\ldots,x_4)\in\mathcal X=\prod_{i=1}^4[-D_i,D_i]$
		for the integration variable. Then $t_I\eqsim D$, and the four products have sizes
		\[
		(D_iE_i)_{i=1}^4\eqsim
		\left(\frac{NHQ^2}{R^4},\frac{HQ}{R^2},
		\frac{NH}{R^2},\frac HQ\right).
		\]
		They are bounded below by fixed positive constants, so
		\[
		P:=\prod_{i=1}^4(1+D_iE_i)\eqsim\frac{H^4N^2Q^2}{R^8}.
		\]
		The first two sieve restrictions give the corresponding conditions in
		\eqref{eq:pairs}. The other two imply
		\[
		\begin{aligned}
			\left|\frac{\gamma_{I'}d_{I'}^3}{\gamma_Id_I^3}-1\right|
			&\lesssim\frac{|t_I-t_{I'}|}{D}
			\lesssim\frac1{DL\sqrt K}\eqsim\frac{R^2}{HN},\\
			|\kappa_I-\kappa_{I'}|
			&\lesssim\frac{|\kappa_It_I-\kappa_{I'}t_{I'}|}{D}
			+\frac{|t_I-t_{I'}|}{D}\\
			&\lesssim\frac{\sqrt K}{LD}+\frac1{DL\sqrt K}
			\lesssim\frac QH.
		\end{aligned}
		\]
		Thus the sieve pair count is $O(B_Q)$, by \eqref{eq:pair-comparison}.
		
		The cone hypotheses follow from
		\[
		L\gtrsim W\to\infty,\qquad K/L\eqsim N/H\to\infty,\qquad
		\eta K\eqsim C_\eta Q/H,\qquad
		\eta KL\eqsim C_\eta(Q/R)^2.
		\]
		The preceding choices of $C_\eta$ and $c_*$ ensure
		$1\le L<K\le\eta^{-1}\le KL$. Also
		$D\eqsim(\eta L\sqrt K)^{-1}$. After interchanging the first two
		coordinates, cover their box and the third-coordinate interval by a
		bounded number of translates of a box with side lengths
		$2,2,2(\eta L\sqrt K)^{-1}$. Translation multiplies the coefficients
		by constants of modulus one. At each fixed fourth coordinate $x_4$,
		absorb $e(x_4l/\sqrt k)$ into those coefficients.
		\Thmref{thm:first}, followed by integration in $x_4$, gives
		the distribution hypothesis of \Lemref{lem:elementary-sieve}
		with constants $c_q\lesssim_\varepsilon T^\varepsilon\mathcal E_q^q$
		for $q\in\{4,6\}$.
		Hence \eqref{eq:Farey} and \eqref{eq:classical-spacing} give
		\begin{equation}\label{eq:all-Q-mixed}
			\begin{aligned}
				S_Q&\lesssim_\varepsilon T^\varepsilon\frac{R^2}{Q}
				\sum_{q\in\{4,6\}}P^{1/q}(\#\mathcal I_Q)^{1-2/q}B_Q^{1/q}\mathcal E_q\\
				&\lesssim_\varepsilon T^\varepsilon\frac{MR}{N}
				\sum_{q\in\{4,6\}}(R/Q)^{3-20/(3q)}W^{4/(3q)}\mathcal E_q.
			\end{aligned}
		\end{equation}
		More explicitly,
		\[
		\begin{gathered}
			\mathcal E_6\eqsim\frac{\sqrt{NH}}R\frac QR,\\
			\mathcal E_4\lesssim
			\frac{N^{1/4}H^{3/4}}R\frac QR
			+\frac{N^{1/2}H^{1/4}}{R^{3/4}}\left(\frac QR\right)^{5/4},
		\end{gathered}
		\]
		and therefore
		\[
		\begin{aligned}
			\frac{S_Q}{H}\lesssim_\varepsilon T^\varepsilon\bigg[
			&\frac{M}{\sqrt{HN}}W^{2/9}(R/Q)^{8/9}\\
			&+\frac{M}{N^{3/4}H^{1/4}}W^{1/3}(R/Q)^{1/3}\\
			&+\frac{MR^{1/4}}{N^{1/2}H^{3/4}}W^{1/3}(R/Q)^{1/12}\bigg].
		\end{aligned}
		\]
		For each $\delta>0$, $\sum_{j\ge0}2^{-j\delta}=(1-2^{-\delta})^{-1}$.
		Sum the last display over $Q=2^jR$ and use
		\[
		\frac{MR^{1/4}}{N^{1/2}H^{3/4}}W^{1/3}
		=W^{-5/36}\frac{M}{\sqrt{HN}}W^{2/9},\qquad W\ge1.
		\]
		The error in \eqref{eq:class-reduction}, divided by $H$, is also
		bounded by the first term of \eqref{eq:second}.
	\end{proof}
	
	\subsection{The two van der Corput transformations}\label{sec:class-transform}
	We prove \eqref{eq:separated} and \eqref{eq:class-reduction}.
	By \eqref{eq:abel-bv}, it suffices to take $g=G=1$ and retain
	arbitrary subintervals.
	
	Let $\mathcal J$ correspond to a long major arc with reduced fraction
	$a_0/d_0$. By \cite[(2.10), (2.13)]{Watt04},
	\[
	d_0\lesssim R^2/H,\qquad
	|f'(x)-a_0/d_0|\lesssim(Hd_0)^{-1},\qquad
	f''(x)\eqsim(NR^2)^{-1}\quad(x\in\mathcal J).
	\]
	Thus the phase $g_h(x)=h(f(x)-a_0x/d_0)$ satisfies
	\[
	|g_h'(x)|\lesssim d_0^{-1},\qquad
	g_h''(x)\eqsim H/(NR^2)\quad(h\approx H,\ x\in\mathcal J).
	\]
	These are the derivative bounds used in the major-arc argument of
	\cite[(3.7)--(3.12), \Extref{Watt04}{Lemma~3.1}]{Watt04}, with auxiliary modulus one.
	Applied directly to $hf(x)$, that argument gives
	\[
	\begin{aligned}
		\left|\sum_{h\approx H}\sum_{n\in\mathcal J\cap\Z}e(hf(n))\right|
		&\lesssim \frac R{d_0}\sqrt{HN}
		+R\sqrt{N/H}+(H+d_0)\log(2d_0)\\
		&\lesssim\frac R{d_0}\sqrt{HN}\log T,
	\end{aligned}
	\]
	uniformly on subintervals. Indeed, $d_0\lesssim R^2/H$ and $R<H<N$ imply
	\[
	\frac{R\sqrt{N/H}}{(R/d_0)\sqrt{HN}}=\frac{d_0}H\lesssim1,\qquad
	\frac{H+d_0}{(R/d_0)\sqrt{HN}}\lesssim\frac R{\sqrt{HN}}\lesssim1.
	\]
	There are $O(d_0T/M^2)$ possible numerators and $O(1)$ intervals per
	fraction. Thus the total is
	\[
	\lesssim\log T\sum_{d_0\lesssim R^2/H}
	\frac{d_0T}{M^2}\frac R{d_0}\sqrt{HN}
	\lesssim\frac{MR}{\sqrt{HN}}\log T
	\le M\sqrt{H/N}\log T.
	\]
	
	Fix $I\in\mathcal I_Q$. While $I$ is fixed, write
	$a=a_I$, $d=d_I$, $\bar a=\bar a_I$, and retain the scales $K,L$
	from \eqref{eq:spacing-scales}, and put $x=n-m_I$.
	For $N\le x\le3N$, define
	\[
	E_I(x)=\left(f'(m_I)-\frac ad\right)x
	+\frac12\int_0^x(x-t)^2f'''(m_I+t)\,dt.
	\]
	Since $m_I$ is within $1/2$ of a solution of $f'=a/d$,
	\begin{equation}\label{eq:Taylor-weight}
		\begin{gathered}
			f(m_I+x)=\frac{b_I+\kappa_I}{d}+\frac adx+\gamma_Ix^2+E_I(x),\\
			|f'(m_I)-a/d|\lesssim(NR^2)^{-1},\qquad
			|f'''(m_I+x)|\lesssim(MNR^2)^{-1},\\
			H\bigl(\|E_I\|_{\infty,[N,3N]}+N\|E_I'\|_{\infty,[N,3N]}\bigr)
			\lesssim H/R^2+HN^2/(MR^2)=o(1).
		\end{gathered}
	\end{equation}
	Consequently
	$H^iN^j|\partial_h^i\partial_x^je(hE_I(x))|\lesssim1$
	for $H\le h\le2H$, $N\le x\le3N$ and $i,j\in\{0,1\}$.
	For real endpoints satisfying
	$H\le H_-\le H_+\le2H$ and $N\le N_-\le N_+\le3N$,
	\eqref{eq:abel-two} reduces the sum to
	\[
	\sum_{H_-\le h\le H_+}\sum_{N_-\le x\le N_+}
	e\left(\frac{ahx+(b_I+\kappa_I)h}{d}+\gamma_Ihx^2\right).
	\]
	
	For $l\in\N$, choose $h_l\in\{0,\ldots,d-1\}$ with
	$h_l\equiv-\bar al\pmod d$. For $h>0$, define
	\[
	\begin{gathered}
		J_l=[H_-,H_+]\cap[l/(2\gamma_IdN_+),l/(2\gamma_IdN_-)],\\
		\phi_{I,l}(h)=\kappa_Ih/d-l^2/(4\gamma_Id^2h),\qquad
		A_I(h)=(2\gamma_Ih)^{-1/2}.
	\end{gathered}
	\]
	The first transformation \cite[\Extref{Watt04}{Lemma~4.1}, (4.22)]{Watt04}, with
	its auxiliary modulus equal to one and its character identically one, gives
	\[
	\sum_{l\in\N}e(1/8-\bar ab_Il/d)
	\sum_{h\in J_l\cap(h_l+d\Z)}A_I(h)e(\phi_{I,l}(h))
	+O\bigl((Q/R)\sqrt{HN}\log(2HN)\bigr).
	\]
	Only $l\eqsim L$ occur. Write $h=h_l+dv$. Summation uses
	$v\in\Z$. For real $v$ with $H\le h\le2H$ and $j=2,3,4$,
	\[
	\begin{gathered}
		\partial_v^j\phi_{I,l}(h_l+dv)
		=\frac{(-1)^{j+1}j!l^2d^{j-2}}{4\gamma_Ih^{j+1}},\\
		|\partial_v^j\phi_{I,l}(h_l+dv)|
		\eqsim\frac{NH}{R^2}(d/H)^j,\qquad
		\|A_I\|_{\infty,[H,2H]}+\Var_{[H,2H]}(A_I)\lesssim R\sqrt{N/H}.
	\end{gathered}
	\]
	Apply \cite[\Extref{Huxley96}{Lemma~5.5.3}]{Huxley96} in $v$, at scale $H/d$.
	The modulus of its second derivative is comparable to
	$Nd^2/(HR^2)\gtrsim N/H\to\infty$.
	For $k,l\in\N$, define $h_*,x_*>0$ by the formulas below.
	They are well-defined because $0\le\kappa_I<1$, and they satisfy
	\[
	\begin{gathered}
		d\phi_{I,l}'(h_*)=k,\qquad
		h_* =\frac l{2\sqrt{\gamma_Id(k-\kappa_I)}},\qquad
		x_* =\sqrt{\frac{k-\kappa_I}{\gamma_Id}}.\\[-2pt]
		\phi_{I,l}(h_*)-kh_*/d
		=-\frac{l\sqrt{k-\kappa_I}}{\sqrt{\gamma_Id^3}},\qquad
		\frac{A_I(h_*)}{d\sqrt{|\phi_{I,l}''(h_*)|}}
		=\frac1{2\sqrt{\gamma_Id(k-\kappa_I)}}.
	\end{gathered}
	\]
	The restrictions $x_*\in[N_-,N_+]$ imply $k\eqsim K$.
	The progression contributes
	$e(kh_l/d)=e(-\bar akl/d)$. The factors $e(1/8)$ and $e(-1/8)$ cancel.
	The transformed sum is
	\begin{equation}\label{eq:two-Poisson}
		\sum_{\substack{k,l\in\N\\x_*\in[N_-,N_+],\ h_*\in[H_-,H_+]}}
		\frac{e(-\bar al(k+b_I)/d-l\sqrt{k-\kappa_I}/\sqrt{\gamma_Id^3})}
		{2\sqrt{\gamma_Id(k-\kappa_I)}}
		+O\left(\frac QR\sqrt{HN}\log(2HN)\right).
	\end{equation}
	The second transformation costs
	$O(LR\sqrt{N/H}\log(2HN))$. Endpoint changes cost
	$O(LR\sqrt{N/H}+LR^2/Q)$. Both are included in \eqref{eq:two-Poisson}.
	
	When $d\ge c_*H$, the first transformation has
	$O(L(1+H/d))$ terms, each of size $O(R\sqrt{N/H})$.
	Using \eqref{eq:Farey}, the large denominators and all transformation
	errors contribute at most
	\[
	\begin{aligned}
		&\sum_{\substack{Q=2^jR\\j\ge0,\ R\le Q\lesssim H}}
		\frac{MR^2}{NQ^2}\frac QR\sqrt{HN}\log(2HN)
		+\frac{MR}{\sqrt{HN}}\\
		&\hspace{25mm}\lesssim M\sqrt{H/N}\log(2HN).
	\end{aligned}
	\]
	For the remaining denominators, replace $k-\kappa_I$ by $k$ only in
	the interval restrictions of \eqref{eq:two-Poisson}. For fixed $l$,
	the restrictions specify one interval in $k-\kappa_I$. Translation by
	$0\le\kappa_I<1$ changes at most two integers. The cost is
	\[
	O(LR^2/Q)=O(H)\quad\hbox{per interval},\qquad
	O(MH/N)\le O(M\sqrt{H/N})\quad\hbox{in total}.
	\]
	For real $k>\kappa_I$ and $l>0$, define
	\[
	\mathcal R_I(k,l)=t_Il\left(\sqrt{k-\kappa_I}-\sqrt k
	+\frac{\kappa_I}{2\sqrt k}\right).
	\]
	After conjugation, the exponential factor is
	$e(x_I\cdot y_{kl}+\mathcal R_I(k,l))$.
	For real $K\le k\le2K$, $L\le l\le2L$ and $i,j\in\{0,1\}$,
	\[
	K^iL^j|\partial_k^i\partial_l^j\mathcal R_I|
	\lesssim t_ILK^{-3/2}
	\eqsim\frac{HR^2}{NQ^2}\lesssim H/N=o(1).
	\]
	On the same rectangle, the normalized amplitude satisfies
	\[
	K^iL^j\left|\partial_k^i\partial_l^j
	\left(\frac{Qe(\mathcal R_I(k,l))}
	{2R^2\sqrt{\gamma_Id(k-\kappa_I)}}\right)\right|\lesssim1
	\qquad(i,j\in\{0,1\}).
	\]
	These are the bounds required by \eqref{eq:abel-two}.
	The remaining restrictions are
	\[
	\sqrt k\in[\sqrt{\gamma_Id}\,N_-,\sqrt{\gamma_Id}\,N_+],\qquad
	l/\sqrt k\in[2\sqrt{\gamma_Id}\,H_-,2\sqrt{\gamma_Id}\,H_+].
	\]
	Apply \Lemref{lem:finite-separation} to these two restrictions and
	\eqref{eq:abel-two} to the amplitude. Their total coefficient bound is
	$O(\log^2(2+KL))=O(\log^2T)$, proving \eqref{eq:separated}.
	To sum the absolute values over $I$, first multiply each interval sum
	by a scalar of modulus at most one making it nonnegative.
	The preceding error bounds give \eqref{eq:class-reduction}.
	
	\section{A distribution estimate for exponential sums on the cone}\label{sec:spacing}
	We combine Guth--Maldague's
	amplitude-dependent estimate \cite[\Extref{GM}{Theorem~2}]{GM} with an integer count.
	The purpose of this section is to convert that wave-envelope estimate into
	the two-term distribution bound in \eqref{eq:mixed}. The geometric input and
	the subsequent lattice count are kept separate so that the dependence of the
	fourth- and sixth-power terms can be read directly.
	
	\subsection{Reduction to two estimates}\label{sec:firstspacing}
	Let $K,L,\eta>0$ satisfy
	\begin{equation}\label{eq:first-hyp}
		1\le L<K\le\eta^{-1}\le KL,\qquad Z=\eta K^2.
	\end{equation}
	For complex coefficients $c=(c_{kl})$ with $|c_{kl}|\le1$, indexed by integers
	$k\approx K$, $l\approx L$, and for $x\in\R^3$, $\lambda>0$, define
	\[
	\begin{gathered}
		\mathcal B=[-1,1]^2\times
		[-(\eta L\sqrt K)^{-1},(\eta L\sqrt K)^{-1}],\\
		F_c(x)=\sum_{k\approx K,\,l\approx L}c_{kl}
		e(lx_1+klx_2+l\sqrt k\,x_3),\\
		\mathcal D(\lambda)=\sup_{|c_{kl}|\le1}
		\frac{|\{x\in\mathcal B:|F_c(x)|>\lambda\}|}{|\mathcal B|}
		\qquad(\lambda>0).
	\end{gathered}
	\]
	Zero coefficients allow arbitrary interval restrictions.
	\begin{theorem}[First spacing]\label{thm:first}
		Under \eqref{eq:first-hyp}, for every $\lambda>0$,
		\begin{equation}\label{eq:mixed}
			\mathcal D(\lambda)\lesssim_\varepsilon(KL)^\varepsilon
			\left((KL)^3\lambda^{-6}+KL^2(L+Z)\lambda^{-4}\right).
		\end{equation}
	\end{theorem}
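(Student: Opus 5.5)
We will rescale $F_c$ to an exponential sum with frequencies on a $1/R$-neighbourhood of a cone in $\R^3$. We then apply Guth--Maldague's amplitude-dependent wave-envelope estimate \cite[Theorem~2]{GM}, and bound the resulting envelope quantities by counting integer points.

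\emph{Rescaling.} The frequencies are $\xi_{kl}=(l,kl,l\sqrt k)$. Put $u=k/K\in[1,2)$ and $\lambda_l=l/L$. After the dilation $x\mapsto(x_1/L,\,x_2/(KL),\,x_3/(L\sqrt K))$ they become $\lambda_l(1,u,\sqrt u)$. These are points on the cone over the parabola-like curve $u\mapsto(u,\sqrt u)$, which has nonvanishing curvature. The frequencies are separated by $1/L$ in the radial direction and by $1/K$ in the angular variable $u$. The box $\mathcal B$ becomes a box of dimensions $L\times KL\times\eta^{-1}$. Its third side plays the role of the uncertainty scale, so the frequencies may be thickened to a neighbourhood of width $\eta$ in the normal direction. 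The hypothesis $K\le\eta^{-1}\le KL$ places this configuration in the range of the cone estimate. The angular sector width $\eta^{1/2}$ lies between the spacing $1/K$ and the scale $(\eta K)^{1/2}$, and $Z=\eta K^2$ counts how many $k$-values share one canonical $\eta^{1/2}$-sector up to the curvature scale. Periodicity in $x_1,x_2$ (period $1$ after undoing the rescaling) lets us tile space by translates of $\mathcal B$. The distribution estimate on $\mathcal B$ then follows from a global estimate for $F_c$ multiplied by a Schwartz weight adapted to $\mathcal B$.

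\emph{Applying \cite[Theorem~2]{GM}.} That theorem gives
\[
\alpha^4|\{|F|>\alpha\}|\lesssim_\varepsilon R^\varepsilon\sum_{s}\sum_{\tau_s}\sum_{U\|U_{\tau_s}}|U|^{-1}\|S_UF\|_2^4,
\]
where $s$ ranges over dyadic angular scales from $R^{-1/2}$ to $1$, and the restriction $\|S_U F\|$ uses the wave envelopes $U_{\tau_s}$. We split the scale sum at the threshold where the $\ell^2$-mass can be controlled. For small angular scales, the canonical sectors contain few frequencies. Bounding $\|S_UF\|_2^2$ by local orthogonality, $|U|^{-1}\|S_UF\|_2^4\lesssim|U|\,(\#\text{frequencies in }\tau_s)^2$, gives a total of order $|\mathcal B|\,KL^2(L+Z)$. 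The factor $L+Z$ comes from counting pairs $(k,l),(k',l')$ whose frequencies differ by less than the box dual in two Fourier coordinates. The exact equalities $l=l'$ and $kl=k'l'$ force either the diagonal, with $L$ choices of $l$, or the $O(Z)$ nearby $k$ in the same $\eta^{1/2}$-sector. This produces the $\lambda^{-4}$ term. For the large scales, the amplitude-dependent part of Guth--Maldague selects those $s$ with $\|S_UF\|_\infty\gtrsim\alpha$. There we interpolate with the trivial bound $|F|\le KL$ and the $L^2$ identity $\|F\|_{L^2_\#(\mathcal B)}^2\lesssim KL$, using Chebyshev on the envelope square function. The gain $\alpha^{-2}$ relative to $\|S_UF\|_\infty^2\lesssim(KL)^2$ converts the fourth-power bound into $(KL)^3\lambda^{-6}$.

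\emph{Main obstacle.} The delicate step is the integer count in the small-scale term. We must show that within a wave envelope the only surviving frequency coincidences are forced by exact equalities in the $l$ and $kl$ coordinates, because their dual widths are at least $1$ on $\mathcal B$. The remaining $\sqrt k$ coordinate must then give at most $O(Z)$ partners uniformly. We would first try to verify this directly from $|l\sqrt k-l'\sqrt{k'}|\le\eta L\sqrt K$ together with $kl=k'l'$ and $l=l'$. This yields $|k-k'|\lesssim\eta K^{2}$ after a mean value estimate. The cases where an envelope is degenerate, that is $\eta K<1$ versus $\eta K\ge1$, would be handled separately, and the $(KL)^\varepsilon$ loss absorbs the dyadic sums over scales.
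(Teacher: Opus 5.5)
There is a genuine gap. Your architecture matches the paper's: normalize to the cone, apply Guth--Maldague's Theorem~2 with $\mathcal R\eqsim\eta^{-1}$, split the angular scales, and count integers. But both estimates that actually produce \eqref{eq:mixed} are misformulated.

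\textbf{Small scales.} Write $\Delta=K\sigma$ for the length of the $k$-range in a sector at angular scale $\sigma$, and $T_\vartheta=\psi_\vartheta*\sum_{\theta\subseteq\vartheta}|f_\theta|^2$ for the envelope square function. An envelope at scale $\sigma$ is dual to a box of widths $\eta/\sigma^2,\eta/\sigma,\eta$ in the frame adapted to the sector. So it only separates frequencies with $|l'-l|\gtrsim LZ/\Delta^2$ or $|k'-k|\gtrsim Z/\Delta$.
\begin{itemize}
\item Your local-orthogonality bound $|U|^{-1}\|S_UF\|_2^4\lesssim|U|(\#\text{frequencies in }\tau)^2$ therefore fails precisely at the small scales $\Delta<L+Z$ where you invoke it. Since $\sum_U|U|^{-1}\|S_UF\|_2^4$ is quadratic in $\sum_\theta|F_\theta|^2$, knowing the average of that square function does not suffice.
\item Your pair count is inconsistent. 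The equalities $l=l'$ and $kl=k'l'$ force $k=k'$, so no ``$O(Z)$ nearby $k$'' survive. Also, a finest $\eta^{1/2}$-sector contains about $\sqrt Z$ values of $k$, not $Z$.
\item Exact equalities in the first two Fourier coordinates are not available for pairs inside an envelope. They arise for \emph{four-tuples} in the global integral $\int_{\R^3}T_\vartheta^2$, once $x_1$ is extended to $[-K,K]$ by periodicity so that $\widehat\chi$ is narrower than the integer spacing.
\end{itemize}
The paper counts $(k_i,l_i,k_i',l_i')_{i=1,2}$ with $\sum_i(l_i'-l_i)=0$, $\sum_i(k_i'l_i'-k_il_i)=0$ and $|k_i'-k_i|\lesssim Z/\Delta$. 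With $d_i=k_i'-k_i$ and $v=l_1'-l_1$ this becomes $l_1d_1+l_2d_2=v(k_2'-k_1')$. The divisor bound then gives \eqref{eq:envelope-counts}, $V^{-1}\|T_\vartheta\|_2^2\lesssim_\varepsilon(KL)^\varepsilon\Delta L^2(\Delta+L+Z)$. The $Z$ comes from the range of $d_2$, and the $L$ from the case $v\ne0$ with $k_1'=k_2'$.

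\textbf{Large scales.} The selection in Guth--Maldague's Theorem~2 is not $\|S_UF\|\gtrsim\lambda$. The envelope average must exceed a constant times $(KL)^{-\delta}\lambda^2/(\#\tau)^2\gtrsim(KL)^{-\delta}\lambda^2\Delta^2/K^2$, and this dependence on $\Delta$ is what truncates the scales.
\begin{itemize}
\item Interpolating $|F|\le KL$ with $\|F\|_{L^2_\#(\mathcal B)}^2\lesssim KL$ cannot give $(KL)^3\lambda^{-6}$ when $\lambda\gg(KL)^{1/2}$; it gives only $KL\lambda^{-2}$.
\item Any scale-independent sup bound on the envelope square function, such as your $(KL)^2$, loses powers of $\lambda$.
\end{itemize}
The missing input is the scale-dependent bound \eqref{eq:coarse-energy}, $\|T_\vartheta\|_\infty\lesssim\Delta L$ for $\Delta\ge L+Z$. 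This is exactly the regime where your local orthogonality does hold, since then $Z/\Delta$ and $LZ/\Delta^2$ are $O(1)$.

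Combined with the threshold, it forces $\Delta\lesssim_\delta(KL)^\delta K^2L/\lambda^2$. The per-scale bound $V^{-1}\sum_\vartheta\|T_\vartheta\|_2^2\lesssim_\delta(KL)^\delta KL^2\Delta$, summed geometrically up to this cap, gives $K^3L^3\lambda^{-2}$, which is the $\lambda^{-6}$ term. The scales $\Delta<L+Z$ each contribute $KL^2(L+Z)$ up to $(KL)^\delta$, which is the $\lambda^{-4}$ term.
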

	
	If $K\le4$, then $L<K\le4$ and $|F_c|\lesssim KL$ give
	\eqref{eq:mixed}. Henceforth assume $K>4$. Fix coefficients
	$c=(c_{kl})$ as above, a level $\lambda>0$ and a sufficiently small
	constant $c_0>0$. Choose
	$\varphi\in\mathscr S(\R)$ with
	$\operatorname{supp}\widehat\varphi\subseteq[-c_0,c_0]$ and
	$|\varphi|\ge1$ on $[-2,2]$. Put
	\[
	\chi(x)=\varphi(x_1/K)\varphi(x_2)\varphi(\eta L\sqrt K\,x_3),\qquad
	f=\chi F_c,\qquad V=\frac{\sqrt K}{\eta L}=\frac{K|\mathcal B|}{8}.
	\]
	Periodicity in $x_1$ and $|\chi|\ge1$ on
	$[-K,K]\times[-1,1]\times[-(\eta L\sqrt K)^{-1},(\eta L\sqrt K)^{-1}]$
	give
	\begin{equation}\label{eq:cone-normalization}
		V\frac{|\{x\in\mathcal B:|F_c(x)|>\lambda\}|}{|\mathcal B|}
		\lesssim |\{x\in\R^3:|f(x)|>\lambda\}|.
	\end{equation}
	For $\xi=(\xi_1,\xi_2,\xi_3)\in\R^3$, use normalized frequency coordinates
	\[
	\zeta=\left(\frac{\xi_1}{L},\frac{\xi_2}{KL},
	\frac{\xi_3}{L\sqrt K}\right).
	\]
	In these coordinates, the frequencies of $F_c$ satisfy $\zeta_1\zeta_2=\zeta_3^2$.
	The linear map
	\[
	\zeta\longmapsto
	\left(\frac{\zeta_3}{2},\frac{\zeta_2-\zeta_1}{4},
	\frac{\zeta_2+\zeta_1}{4}\right)
	\]
	maps them into the truncated cone
	\[
	\{(z_1,z_2,z_3)\in\R^3:z_1^2+z_2^2=z_3^2,\ 1/2\le z_3\le3/2\}.
	\]
	The standard cone in \cite[\Extref{GM}{Theorem~2}]{GM} is truncated to height
	between $1/2$ and $1$. A fixed finite radial subdivision, followed
	by fixed dilations, reduces the displayed height interval to this
	model. We apply the estimate to the resulting pieces separately.
	The associated smooth frequency cutoffs preserve the Fourier $L^1$
	and spatial $L^2$ bounds on the projected summands used below, up to
	absolute factors. Thus the subdivisions do not change any power of
	$K,L$ or $\eta$.
	In the $\zeta$ coordinates, $\operatorname{supp}\widehat\chi$ lies in
	$[-c_0/(KL),c_0/(KL)]^2\times[-c_0\eta,c_0\eta]$.
	Since $(KL)^{-1}\le\eta$, choose a power of four $\mathcal R\ge4$ with
	$\mathcal R\le\eta^{-1}<4\mathcal R$ and apply
	\cite[\Extref{GM}{Theorem~2}, \Extref{GM}{Section~5}, (29)--(30)]{GM}.
	
	Use the sectors and boxes in \cite[\Extref{GM}{Section~5}, items (1)--(3)]{GM},
	transported to the original coordinates. A finest sector $\theta$
	has angular width comparable to $\mathcal R^{-1/2}$. Define
	$\widehat{f_\theta}=\mathbf1_\theta\widehat f$. All sums in $\theta$
	are over this finite partition, with the displayed containment restrictions.
	For $\sigma\in\{1,1/2,\ldots,\mathcal R^{-1/2}\}$, put
	$\Delta=K\sigma$. Let $\mathcal P_\Delta$ consist of the sectors
	$\vartheta$ of angular width comparable to $\sigma$ that meet
	$\operatorname{supp}\widehat f$. Then
	\[
	\sqrt Z\lesssim\Delta\lesssim K,\qquad
	\#\mathcal P_\Delta\lesssim K/\Delta,
	\]
	and the $k$ indices contributing to each $\vartheta$ lie in an interval
	of length $O(\Delta)$. The scales $\Delta$ have ratio two.
	
	For each scale $\Delta$ and $\vartheta\in\mathcal P_\Delta$,
	let $B_\vartheta$ be the centered parallelepiped obtained by
	transporting the box in \cite[\Extref{GM}{Section~5}, item~(2)]{GM} to the $x$
	coordinates. Write
	$B_\vartheta=A_\vartheta[-1,1]^3$ with $A_\vartheta$ invertible,
	and define the dual box $B_\vartheta^*=A_\vartheta^{-\mathsf T}[-1,1]^3$.
	The translates $\mathcal Q=z+B_\vartheta$, with
	$z\in2A_\vartheta\Z^3$, tile $\R^3$ up to boundaries.
	Let $W_{\mathcal Q}$ be the corresponding weight from
	\cite[\Extref{GM}{Section~5}, item~(3)]{GM}. In the adapted box coordinates,
	this weight is an affine pullback of the fixed polynomial product
	weight of \cite[(13)]{GM}. Its comparison constants are uniform in
	the translation, orientation and dimensions of $\mathcal Q$.
	We make the Fourier-supported majorant explicit, as in
	\cite[\Extref{GM}{Definition~4}, (36)]{GM}. For $u=(u_1,u_2,u_3)\in\R^3$, put
	\[
	w(u)=\prod_{j=1}^3(1+u_j^2)^{-100}.
	\]
	Choose a fixed nonnegative $\psi_0\in\mathscr S(\R^3)$ with
	$\int\psi_0=1$, $\psi_0>0$ on $[-1,1]^3$, and
	$\operatorname{supp}\widehat\psi_0\subseteq[-1,1]^3$.
	Set $\widetilde w=\psi_0*w$ and $c_w=\int\widetilde w>0$. Then
	\[
	\widetilde w\eqsim w,\qquad
	\operatorname{supp}\widehat{\widetilde w}\subseteq[-1,1]^3,\qquad
	w(u+v)\eqsim w(u)\quad(|v|_\infty\le1).
	\]
	The first comparison follows by integrating
	$w(u-v)\lesssim w(u)\prod_{j=1}^3(1+v_j^2)^{100}$ against $\psi_0(v)$
	and using the last comparison for the lower bound.
	Define
	\[
	\psi_\vartheta(x)=
	\frac{\widetilde w(A_\vartheta^{-1}x)}{c_w|\det A_\vartheta|},\qquad
	\widehat\psi_\vartheta(\xi)=
	c_w^{-1}\widehat{\widetilde w}(A_\vartheta^{\mathsf T}\xi).
	\]
	For $\mathcal Q=z+B_\vartheta$, the weights above satisfy
	$W_{\mathcal Q}(y)\lesssim w(A_\vartheta^{-1}(y-z))$.
	Since $|\mathcal Q|=8|\det A_\vartheta|$ and
	$|A_\vartheta^{-1}(x-z)|_\infty\le1$ for $x\in\mathcal Q$, we obtain
	\begin{equation}\label{eq:cone-kernel}
		\begin{gathered}
			\psi_\vartheta\ge0,\qquad \|\psi_\vartheta\|_1=1,\qquad
			\operatorname{supp}\widehat\psi_\vartheta\subseteq B_\vartheta^*,\\
			|\mathcal Q|^{-1}W_{\mathcal Q}(y)
			\lesssim |\det A_\vartheta|^{-1}w(A_\vartheta^{-1}(x-y))
			\lesssim\psi_\vartheta(x-y)\quad(x\in\mathcal Q).
		\end{gathered}
	\end{equation}
	Define
	\[
	T_\vartheta=\psi_\vartheta*
	\sum_{\theta\subseteq\vartheta}|f_\theta|^2\ge0.
	\]
	For every translate $\mathcal Q=z+B_\vartheta$ in this tiling,
	\[
	\frac1{|\mathcal Q|}\int_{\R^3}\sum_{\theta\subseteq\vartheta}|f_\theta(y)|^2W_{\mathcal Q}(y)\,dy
	\lesssim\inf_{x\in\mathcal Q}T_\vartheta(x).
	\]
	The selection condition in \cite[\Extref{GM}{Theorem~2}]{GM} and
	$\#\mathcal P_\Delta\lesssim K/\Delta$ therefore imply that, for each $\delta>0$, there is a fixed
	constant $c_\delta>0$ such that
	\begin{equation}\label{eq:GMcone}
		\lambda^4|\{|f|>\lambda\}|
		\lesssim_\delta(KL)^\delta
		\sum_\Delta\sum_{\vartheta\in\mathcal P_\Delta}
		\int_{\{T_\vartheta\ge c_\delta(KL)^{-\delta}
			\lambda^2\Delta^2/K^2\}}T_\vartheta^2.
	\end{equation}
	In the notation of \cite[\Extref{GM}{Theorem~2}]{GM}, $\#\tau$ counts the
	\emph{active coarse sectors at scale} $\sigma=\Delta/K$, not the
	finest sectors lying inside a particular coarse sector. The bound
	$\#\mathcal P_\Delta\lesssim K/\Delta$ gives
	$(\#\tau)^{-2}\gtrsim\Delta^2/K^2$. Thus every envelope selected by
	Theorem~2 meets the weaker threshold used on the right of
	\eqref{eq:GMcone}. The fixed linear normalization above takes the
	$O(\eta)$ Fourier neighborhood to the $O(\mathcal R^{-1})$ cone
	neighborhood required there.
	Indeed, the preceding average estimate gives
	\[
	\frac1{|\mathcal Q|}
	\left(\int_{\R^3}\sum_{\theta\subseteq\vartheta}
	|f_\theta(y)|^2W_{\mathcal Q}(y)\,dy\right)^2
	\lesssim\int_{\mathcal Q}T_\vartheta(x)^2\,dx.
	\]
	The selection condition places each retained $\mathcal Q$ inside
	the integration set in \eqref{eq:GMcone}. The translates tile space
	and $\mathcal R\le KL$.
	
	\begin{lemma}\label{lem:localization}
		For $\vartheta\in\mathcal P_\Delta$,
		\begin{equation}\label{eq:envelope-counts}
			V^{-1}\|T_\vartheta\|_2^2
			\lesssim_\varepsilon(KL)^\varepsilon\Delta L^2(\Delta+L+Z).
		\end{equation}
		Also,
		\begin{equation}\label{eq:coarse-energy}
			\Delta\ge L+Z\quad\Longrightarrow\quad
			\|T_\vartheta\|_\infty\lesssim\Delta L.
		\end{equation}
	\end{lemma}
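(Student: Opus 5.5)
The plan is to compare $T_\vartheta$ with the unsmoothed square function
\[
g_\vartheta=\sum_{\theta\subseteq\vartheta}|f_\theta|^2 .
\]
Both estimates should then reduce to statements about the integer frequencies $(l,kl,l\sqrt k)$ with $k$ confined to an interval of length $O(\Delta)$.

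\emph{The $L^2$ bound.} Since $\psi_\vartheta\ge0$ and $\|\psi_\vartheta\|_1=1$, Young's inequality gives $\|T_\vartheta\|_2\le\|g_\vartheta\|_2$, so it suffices to bound $\int g_\vartheta^2$.

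First I would replace each $f_\theta$ by $\chi F_{c,\theta}$. Here $F_{c,\theta}$ keeps only the indices $(k,l)$ whose normalized frequency lies in $\theta$. The Fourier support of $\chi$ has size $O(c_0/K)\times O(c_0)\times O(c_0\eta L\sqrt K)$ in the original coordinates. So the sharp cutoff $\mathbf 1_\theta$ only misassigns frequencies lying within this distance of $\partial\theta$. I would absorb these by enlarging each $\theta$ to a bounded number of neighbouring sectors, at the cost of an absolute constant.

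Expanding $\int g_\vartheta^2$ then gives $\int|\chi|^4\eqsim V$ times a weighted count of quadruples $(k_i,l_i)_{i=1}^4$. All $k_i$ lie in the $O(\Delta)$-interval attached to $\vartheta$, and $|c_{kl}|\le1$. Since $c_0$ is small, the blur in the first two Fourier coordinates is less than $1/2$. Because those coordinates of the frequencies are integers, this forces the exact equalities
\[
l_1-l_2+l_3-l_4=0,\qquad k_1l_1-k_2l_2+k_3l_3-k_4l_4=0,
\]
together with $|\sum\pm l_i\sqrt{k_i}|\lesssim\eta L\sqrt K$.

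Next I would Taylor expand $\sqrt{k_i}$ about a centre $k_0$ of the interval. The constant and linear terms cancel by the two exact equalities. The third condition then becomes
\[
\Bigl|\sum\pm l_i(k_i-k_0)^2\Bigr|\lesssim\eta K^2L=ZL,
\]
up to a cubic error of size $O(L\Delta^3/K^2)$, which is harmless.

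The required bound $\Delta L^2(\Delta+L+Z)$ then amounts to the following count. For each of the $\Delta^2L^2$ choices of $(k_1,l_1,k_3,l_3)$, the completions $(k_2,l_2,k_4,l_4)$ number $O_\varepsilon((KL)^\varepsilon(1+(L+Z)/\Delta))$ on average. To prove this I would fix $l_2$, which determines $l_4$. The second equation then fixes $k_2l_2+k_4l_4$, so $k_4$ is a function of $k_2$ subject to a divisibility condition. The quadratic condition confines $k_2$ to $O(1+ZL/(L\Delta))$ values, via the derivative of a quadratic in $k_2$ of size $L\Delta$. Off the diagonal, I would sum the divisibility restriction over $l_2$ with the divisor bound. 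The extra $L/\Delta$ comes from the trivial range where $l_2$ is free and $k$ is nearly determined.

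\emph{The sup bound.} For \eqref{eq:coarse-energy}, $T_\vartheta(x)$ is an average of $g_\vartheta$ against a positive kernel adapted to $B_\vartheta$. It is dominated by $\sup_{\mathcal Q}|\mathcal Q|^{-1}\int g_\vartheta W_{\mathcal Q}$ over translates $\mathcal Q$ of $B_\vartheta$. On such a box, I would expand $|f_\theta|^2$ and use almost orthogonality of the exponentials $e(\xi\cdot x)$ with distinct frequencies in $\vartheta$.

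The hypothesis $\Delta\ge L+Z$ should be exactly the condition under which $B_\vartheta$ is long enough, in all three directions, to separate distinct frequencies $(l,kl,l\sqrt k)$ with $k$ in the $\Delta$-interval. The point is that the minimal nonzero separations, after removing the lattice coordinates, are controlled by $1/\Delta$ and $\eta$ through $Z$ and $L$. Granting this, the off-diagonal terms are negligible. The average is then $\lesssim\sum_{(k,l)\in\vartheta}|c_{kl}|^2\lesssim\Delta L$.

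I expect the main obstacle to be the quadruple count in the $L^2$ bound. Obtaining the precise shape $\Delta+L+Z$, without a loss such as $L^2$ or $\Delta Z$, requires handling the near-diagonal solutions of the quadratic constraint carefully. I would first try the fibering over $l_2$ combined with the divisor bound described above.
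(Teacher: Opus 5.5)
Your $L^2$ argument has a genuine gap, and your first step creates it. Replacing $\|T_\vartheta\|_2$ by $\|g_\vartheta\|_2$ discards the one feature that makes \eqref{eq:envelope-counts} elementary, namely $\operatorname{supp}\widehat\psi_\vartheta\subseteq B_\vartheta^*$.

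In the frame $(\zeta_1,\zeta_3-t_0\zeta_1,\zeta_2-2t_0\zeta_3+t_0^2\zeta_1)$, the frequency of $(k,l)$ is $(l/L)(1,s,s^2)$ with $s=\sqrt{k/K}-t_0$. In this frame $B_\vartheta^*$ has widths $\eta/\sigma^2,\eta/\sigma,\eta$, where $\sigma=\Delta/K$. Hence $\psi_\vartheta*(u_{\theta,kl}\overline{u_{\theta,k'l'}})\equiv0$ unless $|k'-k|\lesssim Z/\Delta$ and $|l'-l|\lesssim LZ/\Delta^2$.

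The paper keeps this localization inside each factor of $\int T_\vartheta^2$. It bounds every term by $O(V)$ via $\|\psi_\vartheta*(u\bar u')\|_2\le\|u\|_4\|u'\|_4$, and then uses only the two exact integer equations. With $d_i=k_i'-k_i$ and $v=l_1'-l_1$, these read $l_1d_1+l_2d_2=v(k_2'-k_1')$ with $|d_i|\lesssim Z/\Delta$. Choosing $k_1',k_2',v,d_2,l_2$ and applying the divisor bound to $l_1d_1$ gives $\Delta^2L^2(1+Z/\Delta)$. The case $d_1=d_2=0$ adds $\Delta^2L^2+\Delta L^3$. The third Fourier coordinate is never used beyond this localization.

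Your sketch of \eqref{eq:coarse-energy} rests on exactly this computation, which you leave as ``granting this''. Once it is done, the off-diagonal terms vanish exactly. Each of the $O(\Delta L)$ indices then has $O((1+Z/\Delta)(1+LZ/\Delta^2))=O(1)$ partners when $\Delta\ge L+Z$.

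Without the localization you face a different and much harder problem. At the top scale $\sigma=1$, your quadruple count over an interval of length $\eqsim K$ is essentially the full fourth-moment count for $F_c$ on $\mathcal B$. Your claimed bound would therefore give $\|F_c\|_{L^4_\#(\mathcal B)}^4\lesssim_\varepsilon(KL)^\varepsilon KL^2(K+L+Z)$. In this paper that bound is obtained only by integrating \eqref{eq:mixed} in $\lambda$, i.e.\ as a corollary of Theorem~\ref{thm:first}, whose proof this lemma is part of. It is a first-spacing problem, not a divisor count.

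Your mechanism does not reach it, for three reasons.
\begin{itemize}
\item The quadratic reduction fails for large $\Delta$. In your normalization the cubic Taylor error is $O(L\Delta^3/K)$, which exceeds the tolerance $ZL$ once $\Delta^3>ZK$.
\item After you eliminate $k_4$, the quadratic in $k_2$ has derivative $2l_2(k_4-k_2)$, not one of size $\eqsim L\Delta$. Near $k_2=k_4$ its sublevel set therefore has length $\sqrt Z$, not $Z/\Delta$.
\item Even one admissible $k_2$ per value of $l_2$, summed over $l_2$, gives $\Delta^2L^3$. That is a factor $\eqsim L$ too large when $\Delta\ge L+Z$. Removing this factor would require showing that the congruence $l_2k_2\equiv k_1l_1+k_3l_3\pmod{l_4}$ usually has no solution in the short admissible $k_2$-range. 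The divisor bound counts factorizations of a fixed integer and does not give this.
\end{itemize}

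A smaller point: $f_\theta$ has a sharp frequency cutoff, so it cannot be replaced by $\chi F_{c,\theta}$ on enlarged sectors. It suffices, as in the paper, to keep the pieces $u_{\theta,kl}$ and use only $\|u_{\theta,kl}\|_\infty\lesssim1$ and $\|u_{\theta,kl}\|_4^4\lesssim V$.
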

	
	\begin{proof}[Proof of \Thmref{thm:first}]
		Apply \Lemref{lem:localization} with loss $\delta>0$.
		Since $\#\mathcal P_\Delta\lesssim K/\Delta$,
		\[
		V^{-1}\sum_{\vartheta\in\mathcal P_\Delta}\|T_\vartheta\|_2^2
		\lesssim_\delta(KL)^\delta KL^2(\Delta+L+Z).
		\]
		For $\Delta<L+Z$, summing over $O(\log(2K))$ scales gives
		\[
		V^{-1}\sum_{\Delta<L+Z}\sum_{\vartheta\in\mathcal P_\Delta}
		\|T_\vartheta\|_2^2
		\lesssim_\delta(KL)^{2\delta}KL^2(L+Z).
		\]
		For $\Delta\ge L+Z$, a nonempty integration set in \eqref{eq:GMcone}
		requires
		\[
		c_\delta(KL)^{-\delta}\lambda^2\Delta^2/K^2
		\le\|T_\vartheta\|_\infty\lesssim\Delta L,
		\qquad
		\Delta\lesssim_\delta(KL)^\delta K^2L/\lambda^2.
		\]
		The geometric sum of these $\Delta$ is bounded by twice its largest
		term. Substitution in \eqref{eq:GMcone}, followed by
		\eqref{eq:cone-normalization}, yields
		\[
		\lambda^4\mathcal D(\lambda)
		\lesssim_\delta(KL)^{3\delta}
		\left(KL^2(L+Z)+\frac{K^3L^3}{\lambda^2}\right).
		\]
		Take $\delta=\varepsilon/4$.
	\end{proof}
	
	\subsection{Fourier support and the integer count}
	\begin{proof}[Proof of \Lemref{lem:localization}]
		For each finest sector $\theta$ and integers $k\approx K$, $l\approx L$,
		define $u_{\theta,kl}$ by
		\[
		\widehat u_{\theta,kl}(\xi)
		=\mathbf1_\theta(\xi)\widehat\chi\bigl(\xi-(l,kl,l\sqrt k)\bigr),
		\qquad f_\theta=\sum_{k\approx K,\,l\approx L}c_{kl}u_{\theta,kl}.
		\]
		Fourier inversion and Plancherel give
		\[
		\|u_{\theta,kl}\|_\infty\le\|\widehat\chi\|_1\lesssim1,\qquad
		\|u_{\theta,kl}\|_2\le\|\chi\|_2\lesssim V^{1/2},\qquad
		\|u_{\theta,kl}\|_4^4\lesssim V.
		\]
		
		Fix $\vartheta\in\mathcal P_\Delta$. Choose an interval in
		$[K,2K]$ of length $O(\Delta)$ containing every $k$ for which
		$u_{\theta,kl}\not\equiv0$ for some $\theta\subseteq\vartheta$ and $l\approx L$.
		Let $k_0$ be its midpoint, and put $t_0=\sqrt{k_0/K}$.
		Use the frequency coordinates
		\[
		(\zeta_1,\ \zeta_3-t_0\zeta_1,\
		\zeta_2-2t_0\zeta_3+t_0^2\zeta_1).
		\]
		The frequency $(l,kl,l\sqrt k)$ has coordinates
		\[
		\left(\frac lL,\frac lL(\sqrt{k/K}-t_0),
		\frac lL(\sqrt{k/K}-t_0)^2\right),
		\]
		and the widths of $B_\vartheta^*$ are comparable to
		$\eta/\sigma^2,\eta/\sigma,\eta$, where $\sigma=\Delta/K$.
		For $\theta\subseteq\vartheta$ and integers $k,k'\approx K$, $l,l'\approx L$,
		the condition
		$\psi_\vartheta*(u_{\theta,kl}\overline{u_{\theta,k'l'}})\not\equiv0$
		and \eqref{eq:cone-kernel} imply
		\[
		\begin{gathered}
			|l'-l|\lesssim LZ/\Delta^2,\\
			|l'(\sqrt{k'}-\sqrt{k_0})-l(\sqrt k-\sqrt{k_0})|
			\lesssim LZ/(\Delta\sqrt K).
		\end{gathered}
		\]
		The widths of $\widehat\chi$ are absorbed because $\eta KL\ge1$ and
		$\sigma\le1$. As $l'\eqsim L$ and
		$|\sqrt k-\sqrt{k_0}|\lesssim\Delta/\sqrt K$,
		\[
		\frac{L|k'-k|}{\sqrt K}
		\lesssim\frac{LZ}{\Delta\sqrt K}
		+\frac{LZ}{\Delta^2}\frac\Delta{\sqrt K}
		\lesssim\frac{LZ}{\Delta\sqrt K}.
		\]
		Thus
		\begin{equation}\label{eq:unrounded-pairs}
			|k'-k|\lesssim Z/\Delta,\qquad |l'-l|\lesssim LZ/\Delta^2.
		\end{equation}
		
		Each translated support meets $O(1)$ finest sectors. For a fixed
		$(k,l)$, \eqref{eq:unrounded-pairs} allows at most
		$C(1+Z/\Delta)(1+LZ/\Delta^2)$ choices of $(k',l')$.
		Since there are $O(\Delta L)$ choices of $(k,l)$,
		\[
		\|T_\vartheta\|_\infty
		\lesssim\Delta L(1+Z/\Delta)(1+LZ/\Delta^2)
		\lesssim\Delta L\qquad(\Delta\ge L+Z).
		\]
		This proves \eqref{eq:coarse-energy}.
		
		For every pair of projected summands, Young's and H\"older's
		inequalities give
		\[
		\|\psi_\vartheta*(u_{\theta,kl}\overline{u_{\theta,k'l'}})\|_2
		\le\|u_{\theta,kl}\|_4\|u_{\theta,k'l'}\|_4\lesssim V^{1/2}.
		\]
		Each term in the expansion of $\int_{\R^3}T_\vartheta(x)^2\,dx$
		is therefore $O(V)$. In a nonzero term, the integers $k_i,k_i'$
		lie in the fixed interval for $\vartheta$, and $l_i,l_i'\approx L$
		for $i=1,2$. Fourier support gives
		\[
		\left|\sum_{i=1}^2(l_i'-l_i)\right|\le4c_0/K<1,\qquad
		\left|\sum_{i=1}^2(k_i'l_i'-k_il_i)\right|\le4c_0<1.
		\]
		Both sums vanish. Define
		$d_i=k_i'-k_i$ for $i=1,2$, and $v=l_1'-l_1$. Then
		\begin{equation}\label{eq:paired}
			l_2'-l_2=-v,\qquad
			l_1d_1+l_2d_2=v(k_2'-k_1'),\qquad
			|d_i|\lesssim Z/\Delta\quad(i=1,2).
		\end{equation}
		For this upper bound, discard the $l$-difference restriction in
		\eqref{eq:unrounded-pairs}.
		
		If $(d_1,d_2)\ne(0,0)$, interchange the two pairs so that $d_1\ne0$.
		Choose $k_1',k_2',v,d_2,l_2$ first. The number of choices is
		\[
		O(\Delta^2)\,O(L)\,O(1+Z/\Delta)\,O(L)
		=O\bigl(\Delta^2L^2(1+Z/\Delta)\bigr).
		\]
		For an admissible choice, the nonzero integer
		$D=v(k_2'-k_1')-l_2d_2$ satisfies
		\[
		l_1d_1=D,\qquad
		0<|D|=|l_1d_1|\lesssim L(1+Z/\Delta)\lesssim KL.
		\]
		The divisor bound \cite[\Extref{HW}{Theorem~315}]{HW} gives
		$O_\varepsilon((KL)^\varepsilon)$ choices of $(l_1,d_1)$.
		All remaining indices are determined by
		\[
		k_i=k_i'-d_i\quad(i=1,2),\qquad
		l_1'=l_1+v,\qquad l_2'=l_2-v.
		\]
		If $d_1=d_2=0$, \eqref{eq:paired} reduces to
		$v(k_2'-k_1')=0$. The cases $v=0$ and $v\ne0$ give, respectively,
		\[
		O(\Delta^2L^2),\qquad O(\Delta L^3).
		\]
		Consequently
		\[
		\begin{aligned}
			V^{-1}\|T_\vartheta\|_2^2
			&\lesssim_\varepsilon(KL)^\varepsilon
			\bigl(\Delta^2L^2(1+Z/\Delta)+\Delta L^3\bigr)\\
			&=(KL)^\varepsilon\Delta L^2(\Delta+L+Z),
		\end{aligned}
		\]
		which is \eqref{eq:envelope-counts}.
	\end{proof}
	
	\appendix
	
	\section{Summation and pair counts}\label{sec:transforms}
	\subsection{Abel summation}
	Let $u,v\in\Z$ with $u\le v$, and let $b_u,\ldots,b_v$ be complex.
	For $u\le n\le v$, discrete Abel summation gives
	\begin{equation}\label{eq:abel-bv}
		b_n=b_v+\sum_{j=u}^{v-1}(b_j-b_{j+1})\mathbf1_{n\le j},\qquad
		|b_v|+\sum_{j=u}^{v-1}|b_j-b_{j+1}|
		\le\|b\|_\infty+\Var(b).
	\end{equation}
	Thus estimates uniform on subintervals extend to bounded-variation
	weights, using their actual values at jumps.
	Let $\mathcal I$ be a nonempty index set, and let
	$\mathcal Q\subseteq\R^2$ be a closed rectangle with side lengths $X,Y\ge1$
	in the $x,y$ directions. Suppose the complex-valued functions
	$A_I$, $I\in\mathcal I$, are twice continuously differentiable on a
	neighborhood of $\mathcal Q$. With all norms below taken on $\mathcal Q$,
	assume
	\[
	\sup_I\big(\|A_I\|_\infty+X\|\partial_xA_I\|_\infty
	+Y\|\partial_yA_I\|_\infty
	+XY\|\partial_x\partial_yA_I\|_\infty\big)\le C.
	\]
	Applying \eqref{eq:abel-bv} in each coordinate gives finite families
	of integers $k_j,l_j$ and coefficients $a_{I,j}$ such that, on
	$\mathcal Q\cap\Z^2$,
	\begin{equation}\label{eq:abel-two}
		A_I(k,l)=\sum_j a_{I,j}\mathbf1_{k\le k_j}\mathbf1_{l\le l_j},
		\qquad \sum_j\sup_I|a_{I,j}|\lesssim C.
	\end{equation}
	The integers $k_j,l_j$ are independent of $I$.
	The coefficients are endpoint values and first or mixed differences,
	whose sums of suprema are bounded by the displayed derivative bounds.
	
	The van der Corput transformations are cited directly from
	\cite[\Extref{Huxley96}{Lemma~5.5.3}]{Huxley96}. Real and imaginary parts allow complex
	weights, and conjugation allows either sign of the second derivative.
	For a bounded-variation amplitude $A$ on an interval $I$ and a real
	phase $\phi$ with $|\phi''|\eqsim\Lambda>0$ on $I$, changing
	endpoint conventions costs
	$O((\|A\|_{\infty,I}+\Var_I(A))(1+\Lambda^{-1/2}))$, which is included
	in the cited error. The scale in that lemma is kept fixed on subintervals.
	
	\subsection{Interval restrictions}
	\begin{lemma}\label{lem:finite-separation}
		Let $\Omega$ and $\mathcal I$ be nonempty finite sets, let
		$c_1,c_2:\Omega\to\R$, and let $J_{I,1},J_{I,2}\subseteq\R$
		be intervals for $I\in\mathcal I$.
		There are finite families of coefficients $a_{I,j}$ and functions
		$b_j:\Omega\to\mathbb C$ such that, for every $I\in\mathcal I$
		and $\nu\in\Omega$,
		\begin{equation}\label{eq:finite-separation}
			\begin{gathered}
				\mathbf1_{J_{I,1}}(c_1(\nu))\mathbf1_{J_{I,2}}(c_2(\nu))
				=\sum_j a_{I,j}b_j(\nu),\\
				\sum_j\sup_I|a_{I,j}|\sup_\nu|b_j(\nu)|\lesssim\log^2(2+\#\Omega).
			\end{gathered}
		\end{equation}
		For one interval restriction the bound is $O(\log(2+\#\Omega))$.
	\end{lemma}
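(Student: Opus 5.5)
We prove \Lemref{lem:finite-separation} by reducing each interval indicator to a sum of one-sided threshold indicators indexed by the finitely many values the relevant coordinate takes, and then taking a product. The nontrivial point is that the coefficient bound must be independent of $\#\mathcal I$ and grow only logarithmically in $\#\Omega$. A naive decomposition $\mathbf1_{[\alpha,\beta]}=\mathbf1_{\le\beta}-\mathbf1_{<\alpha}$ fails for this purpose: it produces functions $b_j$ depending on $I$ through the thresholds, so one would need a separate $j$ for every endpoint, with $\sup_I|a_{I,j}|=1$ for each.

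\emph{Reduction to a fixed finite family of values.} For $i=1,2$, let $v_{i,1}<\cdots<v_{i,N_i}$ be the distinct values of $c_i$ on $\Omega$, so $N_i\le\#\Omega$. For each $I$, the set $\{t:v_{i,t}\in J_{I,i}\}$ is a discrete interval $[s,u]\cap\Z$ in the index $t$, possibly empty. Thus $\mathbf1_{J_{I,i}}(c_i(\nu))=\mathbf1_{[s_I,u_I]}(t_i(\nu))$, where $t_i(\nu)$ is the rank of $c_i(\nu)$. It therefore suffices to treat discrete intervals in $\{1,\dots,N\}$.

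\emph{One restriction via a dyadic (Haar-type) decomposition.} Represent every discrete interval in $\{1,\dots,N\}$, with $N\le 2^n$ and $n=\lceil\log_2(N+1)\rceil$, as a disjoint union of at most $2n$ dyadic blocks $[a2^r+1,(a+1)2^r]$. Take as the family $b_j$ the indicators $\nu\mapsto\mathbf1_{\mathcal D}(t(\nu))$ of all dyadic blocks $\mathcal D$; these are independent of $I$. Set $a_{I,\mathcal D}=1$ if $\mathcal D$ belongs to the canonical decomposition of $I$'s interval and $0$ otherwise. Then $\sup_\nu|b_j|\le1$, but $\sup_I|a_{I,j}|$ can equal $1$ for every block, so summing over $j$ gives $O(N)$, which is too weak.

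To fix this, group the blocks by level $r$ and by their role in the canonical decomposition: at each level there are at most one left-boundary block and one right-boundary block. Define, for each level $r$ and each side, a single function
\[
b_{r,\pm}(\nu)=e\bigl(\pm\,a_r(\nu)\,\cdot\bigr)
\]
where $a_r(\nu)$ is the dyadic position of $t(\nu)$ at level $r$, and expand the block selector by a finite Fourier series. The cleaner route is Abel summation \eqref{eq:abel-bv} along ranks: write
\[
\mathbf1_{[s,u]}(t)=\mathbf1_{t\le u}-\mathbf1_{t\le s-1},
\]
and, for a threshold $u$, write
\[
\mathbf1_{t\le u}=\sum_{r}\epsilon_r(u)\,\mathbf1_{\{\lfloor t/2^r\rfloor=\lfloor u/2^r\rfloor-\cdots\}}
\]
using the binary digits of $u$. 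Each level contributes one term whose $\nu$-dependence is $\mathbf1\{\lfloor (t(\nu)-1)/2^r\rfloor=m\}$ with $m$ depending on $I$. We separate this last dependence by Fourier expansion in $m$ modulo $2^{n-r}$, which is the main obstacle: the resulting coefficient sums over characters are $O(2^{n-r})$, not $O(1)$.

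I therefore expect the intended argument instead to use a discrete Fourier/Carleson-type bound: the indicator of a discrete interval in $\Z/N'\Z$ has Fourier coefficients of size $\min(1,\,1/(N'\|j/N'\|))$. Taking $N'=2N$, the family $b_j(\nu)=e(jt(\nu)/N')$ and $a_{I,j}=\widehat{\mathbf1}_{[s_I,u_I]}(j)$ gives exact equality with $\sum_j\sup_I|a_{I,j}|\lesssim\sum_{j}\min(1,N'/|j|)/N'\cdot N'\lesssim\log(2+N)$. This is the estimate I would actually carry out, and it yields $O(\log(2+\#\Omega))$ for one restriction.

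\emph{Two restrictions.} Take the product of the two single-restriction expansions, with $b_{j_1j_2}(\nu)=b^{(1)}_{j_1}(\nu)b^{(2)}_{j_2}(\nu)$ and $a_{I,j_1j_2}=a^{(1)}_{I,j_1}a^{(2)}_{I,j_2}$. Then $\sup_I|a_{I,j_1j_2}|\le\sup_I|a^{(1)}_{I,j_1}|\sup_I|a^{(2)}_{I,j_2}|$, the sup over $\nu$ factorizes up to the bound $1$, and the double sum is $O(\log^2(2+\#\Omega))$, which is \eqref{eq:finite-separation}.
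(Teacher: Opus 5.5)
Your final argument is correct and is essentially the paper's proof. It replaces each $c_i(\nu)$ by its rank, expands the discrete-interval indicator in the characters $e(jt(\nu)/N')$ with coefficients bounded by $\min(1,1/(N'\|j/N'\|))$ to get the $O(\log(2+\#\Omega))$ bound, and multiplies the two expansions. The paper works directly modulo $N_c=\#c(\Omega)$ rather than $2N$, which makes no difference, and the abandoned dyadic and Abel-summation detours should simply be removed from your write-up.
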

	\begin{proof}
		For $c:\Omega\to\R$, define
		\[
		N_c=\#c(\Omega),\qquad
		r_c(\nu)=\#\{t\in c(\Omega):t<c(\nu)\}.
		\]
		For an interval $J\subseteq\R$, define
		$A_J=\{r_c(\nu):\nu\in\Omega,\ c(\nu)\in J\}$. This is a possibly empty interval
		of integers in $\{0,\ldots,N_c-1\}$. For $0\le j<N_c$, set
		$\gamma_J(j)=N_c^{-1}\sum_{v\in A_J}e(-jv/N_c)$.
		Finite Fourier inversion and summation of a geometric progression give
		\[
		\begin{gathered}
			\mathbf1_J(c(\nu))=
			\sum_{j=0}^{N_c-1}\gamma_J(j)e(jr_c(\nu)/N_c),\\
			|\gamma_J(j)|\lesssim\frac1{1+\min(j,N_c-j)},\qquad
			\sum_{j=0}^{N_c-1}\sup_J|\gamma_J(j)|\lesssim\log(2N_c).
		\end{gathered}
		\]
		Apply this to $c_1,c_2$ and multiply. Repeated values have the same
		rank, and all endpoint conventions are retained.
	\end{proof}
	
	\subsection{Pair counts}
	Let $d,n,r\in\N$, $x_j=(x_{j,1},\ldots,x_{j,d})\in\R^d$ for $1\le j\le n$, and
	$\delta=(\delta_1,\ldots,\delta_d)\in(0,\infty)^d$. Define
	\[
	B(\delta)=\#\{(j,j')\in\{1,\ldots,n\}^2:
	|x_{j,i}-x_{j',i}|\le\delta_i\ (1\le i\le d)\}.
	\]
	Partition the indices into $r$ groups, and let $B_\ell(\delta)$
	count the pairs within group $\ell$. For every fixed $A\ge1$,
	\begin{equation}\label{eq:pair-comparison}
		B(A\delta)\lesssim_{A,d}r\sum_{\ell=1}^r B_\ell(\delta).
	\end{equation}
	For $\nu\in\Z^d$ and $1\le\ell\le r$, let $n_{\nu,\ell}$
	count group $\ell$ in the half-open box
	$\prod_{i=1}^d[\nu_i\delta_i,(\nu_i+1)\delta_i)$, and put
	$n_\nu=\sum_{\ell=1}^r n_{\nu,\ell}$. All sums over $\nu,\mu$
	below run through $\Z^d$, and $\ell$ runs from $1$ to $r$. Then
	\[
	B(A\delta)\le\sum_{|\nu-\mu|_\infty\le\lceil A\rceil+1}n_\nu n_\mu
	\lesssim_{A,d}\sum_\nu n_\nu^2
	\le r\sum_{\ell,\nu}n_{\nu,\ell}^2
	\le r\sum_{\ell=1}^r B_\ell(\delta).
	\]
	The same argument applies when some coordinates lie in $\T$, using
	periodic distance and intervals of length comparable to
	$\min(\delta_i,1)$. All counts retain index multiplicities.
	
	\section{The double large sieve}\label{sec:sieve}
	\begin{lemma}[Double large sieve]\label{lem:elementary-sieve}
		Let $d,n\in\N$, let $D_i,E_i>0$ for $1\le i\le d$, and let
		$\mathcal I$ be finite. Suppose
		$x_j=(x_{j,1},\ldots,x_{j,d})\in\mathcal X=\prod_{i=1}^d[-D_i,D_i]$
		for $1\le j\le n$, and
		$y_\nu=(y_{\nu,1},\ldots,y_{\nu,d})\in\prod_{i=1}^d[-E_i,E_i]$
		for $\nu\in\mathcal I$.
		For $x\in\R^d$ and complex coefficients $b=(b_\nu)_{\nu\in\mathcal I}$
		with $|b_\nu|\le1$, put
		\[
		\begin{gathered}
			F_b(x)=\sum_{\nu\in\mathcal I}b_\nu e(x\cdot y_\nu),\qquad
			P=\prod_{i=1}^d(1+D_iE_i),\\
			B=\#\{(j,j')\in\{1,\ldots,n\}^2:
			|x_{j,i}-x_{j',i}|\le E_i^{-1}\ (1\le i\le d)\}.
		\end{gathered}
		\]
		For $q>2$, set $\mathcal A_q=\sup_{|b_\nu|\le1}
		\|F_b\|_{L^q_\#(\mathcal X)}$. For complex coefficients $a_1,\ldots,a_n$
		with $|a_j|\le1$,
		\begin{equation}\label{eq:elementary-sieve}
			\left|\sum_{j=1}^na_jF_b(x_j)\right|
			\lesssim_{d,q}P^{1/q}n^{1-2/q}B^{1/q}\mathcal A_q.
		\end{equation}
		If $c_4,c_6\ge0$ satisfy, for every $\lambda>0$,
		\[
		\sup_{|b_\nu|\le1}
		\frac{|\{x\in\mathcal X:|F_b(x)|>\lambda\}|}{|\mathcal X|}
		\le c_4\lambda^{-4}+c_6\lambda^{-6},
		\]
		then
		\begin{equation}\label{eq:distributional-sieve}
			\left|\sum_{j=1}^na_jF_b(x_j)\right|
			\lesssim_d\sum_{q\in\{4,6\}}P^{1/q}n^{1-2/q}B^{1/q}c_q^{1/q}.
		\end{equation}
		The count $B$ includes all ordered pairs of indices, with their multiplicities.
	\end{lemma}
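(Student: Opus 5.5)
The proof of \eqref{eq:elementary-sieve} follows the standard Bombieri--Iwaniec smoothing argument. The second estimate \eqref{eq:distributional-sieve} then comes from the same computation, with the $L^q$ norm replaced by a layer-cake decomposition.

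\emph{Smoothing step.} Fix a nonnegative Schwartz function $\Phi$ on $\R^d$ with $\widehat\Phi\ge\mathbf 1_{[-1,1]^d}$ and $\operatorname{supp}\Phi\subseteq[-1/2,1/2]^d$; such a function is obtained from the square of a compactly supported bump. Rescale it to $\Phi_E(x)=\prod_iE_i\,\Phi(E_ix_i)$, so that $\widehat{\Phi_E}\ge1$ on $\prod[-E_i,E_i]\ni y_\nu$. Write $e(x_j\cdot y_\nu)\widehat{\Phi_E}(y_\nu)$ as an integral of $\Phi_E(x-x_j)e(x\cdot y_\nu)$. Dividing out $\widehat{\Phi_E}(y_\nu)$ is harmless: absorb $1/\widehat{\Phi_E}(y_\nu)$ into $b_\nu$ after first reducing, by a further smooth-majorant trick, to coefficients of modulus $O(1)$. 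Alternatively, use the Fourier-series expansion of $1/\widehat{\Phi_E}$ on the box, with absolutely summable coefficients, which only translates $x$.

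This yields
\[
\Big|\sum_ja_jF_b(x_j)\Big|\lesssim\int_{\widetilde{\mathcal X}}|F_{b'}(x)|\,\Big|\sum_ja_j\Phi_E(x-x_j)\Big|\,dx,
\]
where $\widetilde{\mathcal X}$ is a bounded enlargement of $\mathcal X$ by $E_i^{-1}$ in each coordinate. Its measure is $\lesssim|\mathcal X|\prod(1+(D_iE_i)^{-1})$. Covering by $O(1)$ translates reduces averages over $\widetilde{\mathcal X}$ to averages over $\mathcal X$, up to the factor $P/\prod D_iE_i$, which is absorbed correctly below.

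\emph{Interpolation step.} Set $G=\sum_ja_j\Phi_E(x-x_j)$. Then
\[
\|G\|_1\le n,\qquad \|G\|_2^2\le\sum_{j,j'}(\Phi_E*\Phi_E)(x_j-x_{j'})\lesssim B\prod_iE_i,
\]
since $\Phi_E*\Phi_E$ is supported where $|x_{j,i}-x_{j',i}|\le E_i^{-1}$. Interpolation gives $\|G\|_{q'}\le\|G\|_1^{1-2/q}\|G\|_2^{2/q}$. Applying H\"older with exponents $q,q'$ then gives the bound
\[
|\widetilde{\mathcal X}|^{1/q}\mathcal A_q\,n^{1-2/q}\Big(B\prod E_i\Big)^{1/q}\lesssim P^{1/q}n^{1-2/q}B^{1/q}\mathcal A_q,
\]
using $|\widetilde{\mathcal X}|\prod E_i\eqsim P$. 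This proves \eqref{eq:elementary-sieve}.

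\emph{Distributional version.} For \eqref{eq:distributional-sieve}, split $|F_{b'}|=|F|\mathbf 1_{\{|F|\le\lambda_0\}}+|F|\mathbf 1_{\{|F|>\lambda_0\}}$. The hypothesis on the level sets says precisely that $F$ lies in weak $L^4$ plus weak $L^6$. Write $\mu(\lambda)=|\{|F|>\lambda\}|/|\mathcal X|\le c_4\lambda^{-4}+c_6\lambda^{-6}$ and decompose $F=F_4+F_6$ pointwise according to which term dominates $\mu$ at level $|F(x)|$. More concretely, use the dyadic level sets $\{|F|\eqsim2^k\}$. Each piece has measure at most $2\max(c_4\lambda^{-4},c_6\lambda^{-6})|\mathcal X|$, so the levels split into a set $\mathcal K_4$ where the $c_4$-term dominates and a set $\mathcal K_6$ where the $c_6$-term dominates.

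On each dyadic level set, apply H\"older with exponent $q=4$ or $q=6$ accordingly, pairing against $\|G\mathbf 1_{\text{level}}\|_{q'}$. Summing over levels with the weak-type bound produces $c_q^{1/q}$ in place of $\mathcal A_q$, since $\lambda\mu(\lambda)^{1/q}\le(2c_q)^{1/q}$ on $\mathcal K_q$. The remaining issue is to avoid the logarithmic loss from summing over levels.

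\emph{Main obstacle.} The hardest step is exactly this summation over dyadic levels without a $\log$ loss, since the statement has no $\varepsilon$. My approach is to use Lorentz-space duality. Weak $L^q$ pairs with $L^{q',1}$, and $\|G\|_{L^{q',1}}$ is still bounded by interpolation between $L^1$ and $L^2$: the real-interpolation bound $\|G\|_{L^{q',1}}\lesssim_q\|G\|_1^{1-2/q}\|G\|_2^{2/q}$ holds because $1<q'<2$. Hence
\[
\int|F_4||G|\lesssim\|F_4\|_{L^{4,\infty}}\|G\|_{L^{4/3,1}}\qquad\text{and}\qquad\int|F_6||G|\lesssim\|F_6\|_{L^{6,\infty}}\|G\|_{L^{6/5,1}},
\]
where $\|F_q\|_{L^{q,\infty}}^q\lesssim c_q|\widetilde{\mathcal X}|$. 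This gives \eqref{eq:distributional-sieve} with constants depending only on $d$.
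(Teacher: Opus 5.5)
Your proof is correct, but for \eqref{eq:distributional-sieve} it takes a different route from the paper. For \eqref{eq:elementary-sieve} the paper only cites Bourgain--Watt, and your smoothing argument is the standard proof of that bound.

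The paper never smooths or enlarges $\mathcal X$. It partitions $\mathcal X$ into boxes $\mathcal Q$ with sides $\delta_i\le E_i^{-1}$. By the fundamental theorem of calculus, it bounds $|F_b(x_j)|$ by $\mathcal Q$-averages of the derivative sums $F_{b,\mu}$. These have coefficients $b_\nu\prod_i(y_{\nu,i}/E_i)^{\mu_i}$, still of modulus at most one, so the distribution hypothesis applies to them. It then truncates at a single level $\lambda$. The part $|F_{b,\mu}|\le\lambda$ contributes $n\lambda$. The rest is controlled by Cauchy--Schwarz with $\sum_{\mathcal Q}n_{\mathcal Q}^2\le B$ and the layer-cake bound $\int_{\mathcal X\cap\{|F|>\lambda\}}|F|^2\lesssim|\mathcal X|(c_4\lambda^{-2}+c_6\lambda^{-4})$. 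Optimizing $\lambda$ produces both terms.

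Your identity $\sum_ja_jF_b(x_j)=\int F_{b'}G$ gives one self-contained mechanism for both estimates. The price is the enlargement $\widetilde{\mathcal X}$, a translation-covering argument, and Lorentz spaces. Your Lorentz step is valid: splitting $F$ at $\lambda_*=(c_6/c_4)^{1/2}$ gives $\|F_q\|_{L^{q,\infty}}^q\lesssim c_q|\widetilde{\mathcal X}|$, and $\|G\|_{L^{q',1}}\lesssim_q\|G\|_1^{1-2/q}\|G\|_2^{2/q}$ holds for $1<q'<2$.

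However, the logarithmic obstacle you single out disappears if you apply the paper's device to your $G$:
$\int|F_{b'}||G|\le\lambda\|G\|_1+\|F_{b'}\mathbf 1_{\{|F_{b'}|>\lambda\}}\|_{L^2(\widetilde{\mathcal X})}\|G\|_2$.
This uses only the $L^1$ and $L^2$ bounds on $G$ you already have. Both the dyadic-level discussion and the Lorentz machinery can then be dropped.

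Two points of presentation should be fixed.
\begin{enumerate}
\item Since $\widehat{\Phi_E}(y_\nu)\ge1$, the coefficients $b_\nu/\widehat{\Phi_E}(y_\nu)$ already have modulus at most one. No further majorant trick or Fourier expansion of $1/\widehat{\Phi_E}$ is needed.
\item $\widetilde{\mathcal X}$ is not covered by $O(1)$ translates of $\mathcal X$ when some $D_iE_i<1$. One needs about $\prod_i(1+(D_iE_i)^{-1})=P/\prod_iD_iE_i$ translates. This count, together with the translation invariance of both hypotheses (translating $x$ multiplies each $b_\nu$ by a unimodular factor), is exactly what gives $\|F_{b'}\|_{L^q(\widetilde{\mathcal X})}^q\lesssim_d|\widetilde{\mathcal X}|\mathcal A_q^q$ and the transferred distribution bound on $\widetilde{\mathcal X}$. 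So your final computation is right, but the sentence describing it contradicts itself.
\end{enumerate}
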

	\begin{proof}
		The moment estimate \eqref{eq:elementary-sieve} is
		\cite[\Extref{BW18}{Section~5}, (5.5)]{BW18}, with bounded coefficients as in
		\cite[\Extref{BW18}{Section~9}, (9.1)--(9.4)]{BW18}.
		For \eqref{eq:distributional-sieve}, partition $\mathcal X$ into
		axis-parallel boxes $\mathcal Q$ of side lengths
		\[
		\delta_i=\frac{2D_i}{\max\{1,\lceil2D_iE_i\rceil\}},\qquad
		\delta_iE_i\le1,\qquad |\mathcal Q|^{-1}\lesssim_d P/|\mathcal X|.
		\]
		Assign each $x_j$ to one box containing it, and let $n_{\mathcal Q}$ be the
		number of assigned indices. Then
		\[
		\sum_{\mathcal Q} n_{\mathcal Q}=n,\qquad \sum_{\mathcal Q}n_{\mathcal Q}^2\le B.
		\]
		Fix the coefficient sequence $b$.
		For $\mu\in\{0,1\}^d$, define
		\[
		F_{b,\mu}(x)=\sum_{\nu\in\mathcal I}b_\nu
		\prod_{i=1}^d(y_{\nu,i}/E_i)^{\mu_i}e(x\cdot y_\nu).
		\]
		These coefficients have modulus at most one. With
		$\partial^\mu=\partial_{x_1}^{\mu_1}\cdots\partial_{x_d}^{\mu_d}$,
		\[
		\partial^\mu F_b=\left(\prod_{i=1}^d(2\pi\mathrm i E_i)^{\mu_i}\right)F_{b,\mu}.
		\]
		Applying the fundamental theorem of calculus in each coordinate gives,
		for $x_j\in \mathcal Q$,
		\[
		|F_b(x_j)|\le\frac1{|\mathcal Q|}\sum_{\mu\in\{0,1\}^d}
		\prod_{i=1}^d\delta_i^{\mu_i}\int_{\mathcal Q}|\partial^\mu F_b|
		\lesssim_d\frac1{|\mathcal Q|}\sum_{\mu\in\{0,1\}^d}\int_{\mathcal Q}|F_{b,\mu}|.
		\]
		For $\lambda>0$, integration of the distribution function gives
		\[
		\begin{aligned}
			\int_{\mathcal X\cap\{|F_{b,\mu}|>\lambda\}}|F_{b,\mu}|^2
			&=\lambda^2|\mathcal X\cap\{|F_{b,\mu}|>\lambda\}|
			+2\int_\lambda^\infty t|\mathcal X\cap\{|F_{b,\mu}|>t\}|\,dt\\
			&\lesssim|\mathcal X|\bigl(c_4\lambda^{-2}+c_6\lambda^{-4}\bigr).
		\end{aligned}
		\]
		Split each integral over $\mathcal Q$ at $|F_{b,\mu}|=\lambda$ and use
		Cauchy--Schwarz. For every $\lambda>0$,
		\begin{equation}\label{eq:sieve-amplitude-split}
			\begin{split}
				\left|\sum_{j=1}^na_jF_b(x_j)\right|
				&\lesssim_d n\lambda+
				\left(\sum_{\mathcal Q}\frac{n_{\mathcal Q}^2}{|\mathcal Q|}\right)^{1/2}
				\sum_{\mu\in\{0,1\}^d}
				\left(\int_{\mathcal X\cap\{|F_{b,\mu}|>\lambda\}}
				|F_{b,\mu}|^2\right)^{1/2}\\
				&\lesssim_d n\lambda+(PB)^{1/2}
				\big(c_4\lambda^{-2}+c_6\lambda^{-4}\big)^{1/2}.
			\end{split}
		\end{equation}
		If $c_4=c_6=0$, let $\lambda\downarrow0$ in
		\eqref{eq:sieve-amplitude-split}. Otherwise take
		$\lambda=(PBc_4/n^2)^{1/4}+(PBc_6/n^2)^{1/6}>0$.
		For $q\in\{4,6\}$, $PBc_q\lambda^{2-q}\le n^2\lambda^2$,
		which proves \eqref{eq:distributional-sieve}.
	\end{proof}
	
	\medskip
	\noindent\textbf{Funding}
	C.Z. is partially supported by the National Key R\&D Program of China
	No.~2024YFA1015300 and NSFC Grant No.~12371097. J.Z. is partially supported by NSFC Grant No.~12531005.
	
	\medskip
	\noindent\textbf{Conflicts of Interest}\space The authors declare that there are no conflicts of interest.
	
	\medskip
	
	\noindent{\bf AI disclosure.}
	During the preparation of this manuscript, the authors used ChatGPT and Codex to discuss possible proof strategies, assist in drafting and checking parts of the mathematical arguments, organize references, and improve the clarity and readability of the exposition. All mathematical statements and proofs in the final manuscript were written, reviewed, and verified by the authors, who take full responsibility for the content of this manuscript.

\end{document}